\newtheorem{thm}{Theorem}[section]
\newtheorem{lem}[thm]{Lemma}
\newtheorem{cor}[thm]{Corollary}
\newtheorem{prop}[thm]{Proposition}
\theoremstyle{definition}
\newtheorem{example}[thm]{Example}
\newtheorem{dfn}[thm]{Definition}
\newtheorem{assump}[thm]{Assumption}
\newtheorem{Rem}[thm]{Remark}
\theoremstyle{remark}
\numberwithin{equation}{section}
\def\XXint#1#2#3{{\setbox0=\hbox{$#1{#2#3}{\int}$}
\vcenter{\hbox{$#2#3$}}\kern-.5\wd0}}
\newcommand{\R}{\mathbb{R}}
\newcommand{\calH}{\mathcal{H}}
\newcommand{\calL}{\mathcal{L}}
\newcommand{\calQ}{\mathcal{Q}}
\newcommand{\bbC}{\mathbb{C}}
\newcommand{\bbD}{\mathbb{D}}
\newcommand{\bbR}{\mathbb{R}}
\newcommand{\bbZ}{\mathbb{Z}}
\newcommand{\frakk}{\mathfrak{k}}
\newcommand{\K}{\rm K}
\newcommand{\wtilde}{\widetilde}
\def\@makefnmark{%
\leavevmode
\raise.9ex\hbox{\check@mathfonts
\fontsize\sf@size\z@\normalfont%
\@thefnmark}%
}
\title[Deformation along orbits and quantization of non-compact manifolds]{Deformation of Dirac operators along orbits and quantization of non-compact Hamiltonian torus manifolds}
\author{H.~Fujita}
\subjclass[2010]{Primary 19K56, Secondary 53D50, 57S25, 58J22} 
\keywords{equivariant index, geometric quantization, localization}
\address[H.~Fujita]{Department of Mathematical and Physical Sciences, Japan Women's University, 2-8-1 Mejirodai, Bunkyo-ku Tokyo, 112-8681, Japan}
\email{fujitah@fc.jwu.ac.jp}
\begin{document}
\maketitle
\begin{abstract}
We give a formulation of a deformation of Dirac operator along orbits of a group action 
 on a possibly non-compact manifold to get an equivariant index and a K-homology cycle representing the index. 
 We apply this framework to non-compact Hamiltonian torus manifolds to define geometric quantization from the view point of index theory. 
 We give two applications. 
 The first one is a proof of a [Q,R]=0 type theorem, which can be regarded as a proof of the Vergne conjecture for Abelian case. 
 The other is a Danilov-type formula for toric case in the non-compact setting, 
 which a localization phenomenon of geometric quantization in the non-compact setting. 
 The proofs are based on the localization of index to lattice points. 
\end{abstract}

 \tableofcontents
 %
 
\section{Introduction}
\label{Introduction}
In the present paper we study the following two topics. 
 Firstly, we give a formulation of a deformation of Dirac operator along orbits on a possibly non-compact manifold 
 equipped with a group action to get an equivariant index and a K-homology cycle representing the index. 
 Secondly, we apply this framework to Hamiltonian torus manifolds  
 to define geometric quantization from the viewpoint of index theory. 
 In particular we give proofs of a [Q,R]=0 type theorem and a Danilov-type formula for the toric case in the possibly non-compact setting. 
 The proofs 
 are based on the same perspective, taken in \cite{Fujita-Furuta-Yoshida3, Fujitaorigami} by the author and joint works with Furuta and Yoshida, namely, the {\it localization of index to lattice points}.  
 These results give a simplification and a generalization of  \cite{Fujitaorigami, Fujita-Furuta-Yoshida3}. 
 They also make more clear the relation with a similar construction in \cite{Braverman}.   
 
  Geometric quantization of symplectic manifolds originates from ideas in physics. 
  However, nowadays it is related to several topics in various branches 
  of mathematics. One of them is the index theory of Dirac operator. 
  In fact, in some cases,  the quantization 
  can be regarded as an index of the spin$^c$ Dirac operator associated with a compatible almost complex structure. 
  This approach is called {\it spin$^c$ quantization}. 
  Studying quantization from the viewpoint of index theory, K-theory, K-homology and KK-theory is an active area of research.

  Geometric quantization in the compact setting has been extensively studied. 
  The non-compact case has also been studied to some extent.
  For example, such a generalization is important for quantization of Hamiltonian loop group space in \cite{LoizidesSong}.  
  In addition, the non-compact setting plays an essential role to obtain {\it localization phenomena} in geometric quantization as below. 
  On the other hand, unlike the compact manifold case, the index of Dirac operator on a non-compact or open manifold 
  is not well-defined in a straightforward way. 
  To get the index in a possibly generalized sense,
   it is necessary to take an appropriate boundary condition or to consider additional structure
   such as a fiber bundle structure or a nice group action.   
  
  In \cite{Braverman}, Braverman gave a formulation to define an equivariant index in a non-compact setting. 
  This framework originates in a proof of [Q,R]=0 in \cite{Tian-Zhang} and was applied to a solution of the Vergne conjecture in \cite{Ma-Zhangarxiv}. 
 He used a deformation of the Dirac operator by the Clifford action of the vector field generated by the moment map\footnote{In \cite{Braverman} the formulation is established in a more general category which is not necessarily symplectic. 
 In fact, an equivariant map which is called a {\it taming map} is used. }.
 On the other hand in a series of papers \cite{Fujita-Furuta-Yoshida1, Fujita-Furuta-Yoshida2, Fujita-Furuta-Yoshida3} 
 with Furuta and Yoshida the author developed an index theory on open manifolds 
 using a family of partly defined fiber bundle structures and a deformation of Dirac operator. 
  The deformation in \cite{Fujita-Furuta-Yoshida1, Fujita-Furuta-Yoshida2, Fujita-Furuta-Yoshida3} is given by 
  first-order differential operators, a {\it family of Dirac operators along fibers}, which need not use a group action essentially. 
  We call it FFY's deformation for short.  
  Both Braverman's and FFY's deformation are motivated by Witten's pioneering work \cite{Witten},  and 
  in the equivariant case, these deformations have the same nature, that is, a deformations along the orbits.  
  Both of the resulting indices satisfy the {\it excision formula}, which leads us to the localization of index. 
 Here we summarize the differences between Braverman's and FFY's deformation. 
 
 \begin{itemize}
 \item Braverman's deformation : 
 \begin{enumerate}
\item can be applied to compact group actions (not necessarily Abelian\footnote{Some generalizations to proper actions of 
non-compact Lie groups are established in \cite{Hochs-SongI} for example. }), and 
\item realizes a localization of index to the zero level set of the moment map and fixed points (or critical points of the norm square of the moment map). 
 \end{enumerate}
\item  FFY's deformation :
 \begin{enumerate}
 \item can be applied to torus fibrations (e.g., Lagrangian torus fibrations), and 
 \item realizes a localization of index to the inverse images of the lattice points (or Bohr-Sommerfeld fibers). 
 \end{enumerate}
 \end{itemize}
 As an application of the FFY's second  point above,  a geometric proof of [Q,R]=0 for the torus action case 
 based on the localization of index is obtained in \cite{Fujita-Furuta-Yoshida3}. 
 There is an another application in \cite{Fujitaorigami} which gives a proof of Danilov's formula. 
 Danilov's formula can be regarded as a localization of the geometric quantization of toric manifolds 
 to lattice points in the momentum polytope. 
 The proof in \cite{Fujitaorigami} realizes such a picture of localization faithfully.  
 
 In the present paper we give a framework of a deformation of Dirac operator in a similar manner 
 as in the torus-equivariant setting for FFY's deformation. 
 We use a single differential operator along orbits for the deformation, which satisfies some acyclicity and boundedness condition.  
 We call it an {\it acyclic orbital Dirac-type operator} (Definition~\ref{dfn:exD_K} and Definition~\ref{dfn:K-acyclic orbital system}). 
 Though it is similar to the acyclic compatible system in \cite{Fujita-Furuta-Yoshida1} or \cite{Fujita-Furuta-Yoshida2}, 
 the definition of the acyclic orbital Dirac-type operator is much simpler due to the presence of the global torus action 
 and the isotypic component decomposition of the space of sections. 
 Another difference is that the deformation by an acyclic orbital Dirac-type operator gives an {\it transversally elliptic operator} in the sense of Atiyah~\cite{Atiyah1}. 
 We summarize our first main results : 
 
 \medskip
 
 \noindent
 {\bf Theorem~1. (Theorem \ref{prop:sufficientcondD_K} and Corollary~\ref{cor:D_K->index})
 } 
 {\it 
 Under a suitable technical assumptions we can construct an acyclic orbital Dirac-type operator, 
 which gives an equivariant index valued in the formal completion of the representation ring and a natural {\rm K}-homology cycle representing the index. 
 }
 
 \medskip
 
 The above acyclic orbital Dirac-type operator (Definition~\ref{dfn:exD_K}) is a combination of {\it Kasparov's orbital Dirac operator} \cite{Kasparov} and Braverman's deformation term, 
 which in fact becomes the Braverman type Clifford action shifted by a weight when it is restricted to each isotypic component.  
 The second main result is the following.
 
 \medskip
 
 \noindent
 {\bf Theorem~2. (Theorem~\ref{thm:B=F})}
 {\it Under suitable technical assumptions,  
 the equivariant index defined by the acyclic orbital Dirac-type operator 
 coincides with the equivariant index defined by Braverman's deformation. }
 
 \medskip
 
 \noindent
 As a corollary of Braverman's index theorem in \cite{Braverman}, our equivariant index is also equal to Atiyah's 
 transverse index in \cite{Atiyah1} under the same assumptions.  
  
  Finally we apply the above construction to the setting of non-compact Hamiltonian torus manifolds with possibly non-compact fixed point sets,      
  allowing us to define the spin$^c$ quantization of it as an equivariant index (Definition~\ref{dfn:equivindHam}).
  Our quantization has a localization property to integral lattice points due to its origin. 
  The third main result is the following. 
  
  \medskip
  
  \noindent
  {\bf Theorem~3. (Theorem~\ref{noncpt[Q,R]=0} and Theorem~\ref{thm:noncptDanilov})}
  {\it For the quantization of Hamiltonian torus manifolds defined by an acyclic orbital Dirac-type operator, we have the proofs of the following:  
  \begin{enumerate}
  \item $[Q,R]$=0 theorem for integral regular values of the circle action case, and 
  \item a Danilov-type formula for toric case. 
  \end{enumerate}
  }
  
  \medskip

  The proofs of the above theorems apply also to the compact case, giving  
  simple alternative proofs for \cite{Fujitaorigami, Fujita-Furuta-Yoshida3}\footnote{In fact 
  in \cite{Fujitaorigami} the author showed a Danilov-type formula for {\it toric origami manifolds,} 
  which are a generalization of symplectic toric manifolds. 
  It would be possible to give a proof of a similar formula for non-compact toric origami manifolds  
  by modifying the proof in this paper. }. 
  Since our equivariant index can be identified with Atiyah's transverse index,  
  the proof of the first statement in the above Theorem~3 gives an alternative proof of the Vergne conjecture in \cite{Ma-Zhangarxiv}. 
  In the toric case, the lattice points in the momentum polytope 
  are closely related to the geometric quantization obtained by a {\it real polarization}. 
  There are several results concerning the coincidence between the spin$^c$ (or K\"ahler) quantization and 
  the quantization based on the real quantization 
  from the viewpoint of the index theory.  
  For example see \cite{Andersen, Fujita-Furuta-Yoshida1, Kubota, Yoshida100}.  
  Theorem~\ref{thm:noncptDanilov} can be regarded as such a coincidence in the non-compact setting.

  This paper is organized as follows. 
  In Section~\ref{sec:Example of $D_K$} we construct a $K$-acyclic orbital Dirac-type operator (Definition~\ref{dfn:exD_K} and Theorem~\ref{prop:sufficientcondD_K}) for a complete manifold equipped with an action of a compact torus $K$. 
  This operator arises naturally in the situation of Hamiltonian actions on symplectic manifold. 
  In Section~\ref{sec:Relation with Braverman type deformation} we show that our equivariant index 
  is equal to the equivariant index obtained by Braverman's deformation (Theorem~\ref{thm:B=F}). 
  In Section~\ref{sec:Product fomula} we summarize the product formula in useful two ways 
 (Proposition~\ref{prop:productformula1} and Proposition~\ref{prop:productformula2}). 
  Since the product formula itself can be obtained in the abstract framework of index theory of Fredholm operators 
  we just confirm our set-up and statements. 
  We also present two practical formulas which have key roles in Section~\ref{sec:Riemann-Roch character  of proper Hamiltonian torus space}. 
  In Section~\ref{sec:A vanishing theorem} we show a vanishing formula of index 
  for fixed point subsets (Theorem~\ref{thm:vanish}), which is also important in the construction in Section~\ref{sec:Riemann-Roch character  of proper Hamiltonian torus space}. 
In Section~\ref{sec:Riemann-Roch character  of proper Hamiltonian torus space}, by using the constructions and discussions in 
the previous sections we define quantization of Hamiltonian torus manifolds as an equivariant index (Definition~\ref{dfn:equivindHam}). 
For our quantization we show [Q,R]=0 theorem (Theorem~\ref{noncpt[Q,R]=0}) and 
a Danilov-type formula for toric case (Theorem~\ref{thm:noncptDanilov}). 
The proofs are straightforward from the localization property of our index to lattice points and product formulas.
In Section~\ref{sec:Comments and further discussions} we explain  
some future problems concerning quantization of Hamiltonian loop group spaces and 
a relation between the deformation and KK-product.   
  In Appendix~\ref{sec:Set-up and main assumptions} we give a general machinery to have an 
  equivariant index and a K-homology cycle by using 
  $K$-acyclic orbital Dirac-type operator.  
  We show that a deformation by a $K$-acyclic orbital Dirac-type operator has a compact resolvent 
  on each isotypic component of the space of $L^2$-sections (Corollary~\ref{cor:K-Fredholm}), 
  and hence, it gives an equivariant ($K$-Fredholm) index and 
  a K-homology cycle in a natural way (Definition~\ref{dfn:[M]}). 
  We also show that the resulting Fredholm index is equal to that obtained from a deformation 
  using a large parameter instead of the proper function (Theorem~\ref{thm:FFY=F}). 
  This deformation is closer to the deformation studied in \cite{Fujita-Furuta-Yoshida1, Fujita-Furuta-Yoshida2}. 
  
 \subsection{Notations}
 \label{Notations}
 We fix some notations. 
 
 For  a compact Lie group $K$ let ${\rm Irr}(K)$ be the set of all isomorphism classes of finite dimensional irreducible 
 unitary representations of $K$. 
 We frequently do not distinguish an element $\rho\in {\rm Irr}(K)$ and its corresponding representation space. 
 Each unitary representation $\calH$ of $K$ has the {\it $K$-isotypic component decomposition} 
\[
\calH=\bigoplus_{\rho\in{\rm Irr}(K)}\calH^{(\rho)}, 
\] where each isotypic component $\calH^{(\rho)}$ is defined by 
\[
\calH^{(\rho)}={\rm Hom}_K(\rho, \calH)\otimes\rho. 
\]
We also use the similar notation $A^{(\rho)}$ for the restriction of a $K$-equivariant linear map $A$ to the isotypic component. 
The representation ring of $K$ is denoted by $R(K)$, which is generated by ${\rm Irr}(K)$. 
We denote its formal completion by $R^{-\infty}(K)$, 
namely 
\[
R^{-\infty}(K):={\rm Hom}(R(K),\bbZ). 
\]
  Note that $R(K)$ can be identified with the subgroup consisting of finite support elements in $R^{-\infty}(K)$ 
  by taking the coefficients in each irreducible representation.

 Let $\calH$ be a Hilbert space with inner product $(\cdot, \cdot)$, 
$A$ and $B$ self-adjoint operators on $\calH$ which have common domain. 
We write $A\geq B$ if 
 \[
 (Au, u) \geq (Bu,u)
 \] for all $u\in\calH$ in the domain of $A$. 
 If $\calH$ has a $\bbZ/2$-grading and $A$ is an odd Fredholm operator with the decomposition 
 \[
 A=\begin{pmatrix}
 0 & A^- \\ 
 A^+ & 0
 \end{pmatrix}
 \]according to the grading, then its {\it $\bbZ/2$-graded Fredholm index} is defined as the super dimension of $\ker(A)$; 
 \[
 {\rm index}(A):=\dim(\ker A^+)-\dim(\ker A^-)\in\bbZ. 
 \]
 
 Let $M$ be a Riemannian manifold and $W\to M$ a vector bundle over $M$ equipped with a Hermitian metric 
 $\langle\cdot ,\cdot\rangle_W=\langle\cdot ,\cdot\rangle$. 
 This metric gives rise to an $L^2$-inner product on the space of compactly supported sections $\Gamma_c(W)$ of $W$ 
 which is denoted by $(\cdot, \cdot)_W=(\cdot, \cdot)$. 
 The associated $L^2$-norm and $L^2$-completion are denoted by $\|\cdot\|_W=\|\cdot\|$ and $L^2(W)$ respectively. 

In this paper we mean a generalized Dirac operator by a {\it Dirac(-type) operator}. 
Namely for a vector bundle $W$ over a Riemannian manifold $M$ equipped with a structure of 
a Clifford module bundle over $TM$,  a first-order differential operator $D$ acting on $\Gamma_c(W)$ is called 
a {\it Dirac(-type) operator} if $D$ is a formally self-adjoint operator whose principal symbol is equal to the Clifford action on $W$. 
When $W$ has a $\bbZ/2$-grading we impose that a Dirac operator is an odd operator. 

\subsection{Acknowledgement}
 This work had been done while the author stayed at 
 the Department of Mathematics, University of Toronto and 
 the Department of Mathematics and Statistics, 
 McMaster University. 
 The author would like to thank their hospitality, especially for L.~Jeffrey and  M.~ Harada. 
 He also would like to thank Y.~Loizides for explaining his work and having fruitful discussion 
 about Abelian case. 
The author is partly supported by Grant-in-Aid for Scientific Research (C) 18K03288.
Finally, the author is grateful to the referee for pointing out several mistakes in the preliminary version.


\section{Acyclic orbital Dirac-type operator for torus action}
 \label{sec:Example of $D_K$}
 
 \subsection{Construction of $D_K$}
 \label{subsec:Construction of D_K}

 Let $K$ be a compact torus with Lie algebra ${\frakk}$.  
 We fix an inner product on $\frakk$ and identify ${\frakk}^*={\frakk}$. 
 We often identify ${\rm Irr}(K)$ with $\Lambda^*$, where we put $\Lambda:={\rm ker}({\rm exp}:\frakk\to K)$. 
 Let $M$ be a complete Riemannian manifold and $W$ a $\bbZ/2$-graded Clifford module bundle over $M$. 
 Suppose that $K$ acts on $M$ in an isometric way and the action lifts to $W$ as a unitary action. 
 Take a $K$-invariant Hermitian connection $\nabla$ of $W$.

 For $\xi\in\frakk$ we denote the induced infinitesimal action of $\xi$ on $M$ by $\underline{\xi}^M$. 
 Let $\calL_{\xi}:\Gamma(W)\to \Gamma(W)$ be the induced derivative defined by 
 \[
 \calL_{\xi}s : x\mapsto\left.\frac{d}{dt}\right|_{t=0}{\rm exp}(t\xi)s({\rm exp}(-t\xi)x) 
 \]for $s\in \Gamma(W)$. 
 Let $\mu:M\to {\rm End}(W)\otimes{\frakk}^*$ be the map defined by Kostant's formula; 
 \begin{equation}\label{eq:momentmap}
 \calL_{\xi}-\nabla_{\underline{\xi}^M}=\sqrt{-1}\mu(\xi)=\sqrt{-1}\mu_{\xi} \quad (\xi\in{\frakk}). 
 \end{equation}
 Fix an orthonormal basis $\{\xi_1, \ldots, \xi_n\}$ of $\frakk$. 
 
 \begin{dfn}\label{dfn:exD_K}
 We define the {\it orbital Dirac-type operator} $D_K:\Gamma_c(W)\to \Gamma_c(W)$ by 
\[
D_K:=\sum_{i=1}^nc(\underline{\xi_i}^M)(\calL_{\xi_i}-\sqrt{-1}\mu_{\xi_i}). 
\]
 \end{dfn}

 \begin{lem}\label{lem : lemonD_K}
 The orbital Dirac-type operator $D_K$ satisfies the following conditions. 
 \begin{enumerate}
\item $D_K$ is a first order self-adjoint differential operator which contains only differentials along $K$-orbits. 
\item $D_K$ anti-commutes with the Clifford multiplication of the transverse direction to orbits
 Namely for any $K$-invariant function $h$ on $M$ one has 
\[
D_Kc(dh)+c(dh)D_K=0. 
\]
\item For any Dirac-type operator $D$ acting on $\Gamma(W)$, the anti-commutator 
\[
DD_K+D_KD
\]contains only differentials along $K$-orbits. 
\end{enumerate}
 \end{lem}
 \begin{proof}
 (1) follows from the definition of $D_K$. (2) follows from the anti-commutativity between $c(dh)$ and $c(\underline{\xi_i}^M)$ for any $K$-invariant function $h$. 
By using (2)  one can show (3)  by the computation of the anti-commutator ;  
  \[
 (DD_K+D_KD)h=c(dh)D_K+hDD_K+D_Kc(dh)+hD_KD=h(DD_K+D_KD). 
 \]
 \end{proof}

\begin{Rem}\label{rem:orbital+Braverman}
 The differential term $\displaystyle\sum_{i=1}^nc(\underline{\xi_i^M})\calL_{\xi_i}$ in $D_K$ is the {\it orbital Dirac operator} 
 in the sense of Kasparov \cite{Kasparov}. On the other hand the multiplication term 
 $\displaystyle\sum_{i=1}^nc(\underline{\xi_i^M})\mu_{\xi_i}$ is equal to $c(\underline{\mu})$ for 
 $\underline{\mu}:=\displaystyle\sum_{i=1}^n\underline{\xi_i^M}\mu_{\xi_i}$, which gives the deformation studied by Braverman \cite{Braverman}. 
For each $\rho\in {\rm Irr}(K)$ one has $\calL_{\xi_i}=\sqrt{-1}\rho(\xi_i)$ on each isotypic component $L^2(W_L)^{(\rho)}$ , and hence, 
 \[
D_K^{(\rho)}=\sqrt{-1}\sum_{i=1}^nc(\underline{\xi_i^M})(\rho(\xi_i)-\mu_{\xi_i})=
\sqrt{-1}c(\underline{\rho}-\underline{\mu}), 
 \]
 and 
 \[
 (D_K^{(\rho)})^2=|\underline{\rho}-\underline{\mu}|^2, 
 \]where $\underline{\rho}$ is the infinitesimal action induced by $\rho\in \frakk^*=\frakk$. 
 In other words $D_K$ gives a kind of shift of Braverman's deformation.  
 We investigate the relation between our deformation and Braverman's deformation in the next section. 
 \end{Rem}
 
 For $\rho\in {\rm Irr}(K)$ let $Z_{\rho}:={\rm Zero}(\underline{\rho}-\underline{\mu})$ be the set of points in $M$ at which 
 the vector field $\underline{\rho}-\underline{\mu}$ vanishes. 
 Note that $Z_\rho$ coincides with the set of critical points of $|{\rho}-{\mu}|^2$ in $M$, and it contains $M^K\cup \mu^{-1}(\rho)$.  
 The above description of $D_K$ implies the following. 
 
 \begin{prop}
 For $x\in M$ and $\rho\in {\rm Irr}(K)$ we have 
 \[
 \ker(D_K|_{K\cdot x})^{(\rho)}\neq 0 \Longleftrightarrow x\in Z_{\rho}.   
 \]
\end{prop}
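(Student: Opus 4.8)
The plan is to work orbit-by-orbit and reduce the statement to a pointwise computation using the explicit formula $D_K^{(\rho)} = \sqrt{-1}\,c(\underline{\rho}-\underline{\mu})$ established in Remark~\ref{rem:orbital+Braverman}. Fix $x \in M$ and $\rho \in {\rm Irr}(K)$, and consider the restriction of $D_K$ to the single orbit $\calO := K\cdot x$, which is a compact homogeneous $K$-space. The bundle $W|_{\calO}$ is $K$-equivariant, so the space of $L^2$-sections over $\calO$ decomposes into isotypic components, and $\ker(D_K|_{\calO})^{(\rho)}$ is precisely the kernel of the bounded self-adjoint operator $(D_K|_{\calO})^{(\rho)}$ acting on $L^2(W|_{\calO})^{(\rho)}$. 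Since $D_K$ is formally self-adjoint, this kernel is nonzero if and only if $((D_K|_{\calO})^{(\rho)})^2$ has nontrivial kernel, i.e. if and only if $\ker\big((D_K^2)^{(\rho)}\big) \neq 0$ on $\calO$.

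Next I would use the identity $(D_K^{(\rho)})^2 = |\underline{\rho}-\underline{\mu}|^2$ from Remark~\ref{rem:orbital+Braverman}, which holds on each isotypic component as multiplication by the nonnegative function $x \mapsto |\underline{\rho}_x - \underline{\mu}_x|^2$. On the compact orbit $\calO$, the operator $(D_K^2)^{(\rho)}$ is therefore multiplication by this function restricted to $\calO$. A multiplication operator by a continuous nonnegative function on a connected compact manifold has nontrivial kernel on $L^2$ (intersected with the nonzero isotypic component, which is itself nonzero since $W|_\calO$ carries a nonzero $K$-action that hits $\rho$ — or, if it does not, both sides of the claimed equivalence need care) exactly when the function vanishes somewhere on $\calO$. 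Because $\underline{\rho}-\underline{\mu}$ is a $K$-equivariant section, the function $|\underline{\rho}-\underline{\mu}|^2$ is $K$-invariant, hence constant along $\calO$; so it vanishes somewhere on $\calO$ if and only if it vanishes at $x$, i.e. if and only if $x \in Z_\rho$. This gives the equivalence.

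I expect the main obstacle to be the bookkeeping around the isotypic component and the precise sense in which "$\ker(D_K|_{K\cdot x})^{(\rho)} \neq 0$" is to be read — in particular making sure that the multiplicity space ${\rm Hom}_K(\rho, L^2(W|_\calO))$ is nonzero so that the isotypic component $L^2(W|_\calO)^{(\rho)}$ is itself nonzero, since otherwise the kernel is trivially zero regardless of where $\underline{\rho}-\underline{\mu}$ vanishes. The clean way around this is to observe that $Z_\rho \supseteq M^K \cup \mu^{-1}(\rho)$ and, more relevantly, that when $x \in Z_\rho$ the vanishing of $\underline{\rho}_x - \underline{\mu}_x$ forces (via Kostant's formula and the computation of $\calL_\xi = \sqrt{-1}\rho(\xi)$ on the $\rho$-isotypic part) the fiber $W_x$ to contain a copy of $\rho$ on which $D_K$ acts as zero, producing an honest element of $\ker(D_K|_\calO)^{(\rho)}$ by $K$-translation; conversely, if $x \notin Z_\rho$ then $|\underline{\rho}-\underline{\mu}|^2$ is a positive constant on $\calO$, so $(D_K^2)^{(\rho)} \geq \kappa\,\mathrm{Id}$ there and the kernel is zero. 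Everything else is a routine pointwise Clifford-algebra computation and the standard isotypic decomposition of sections over a homogeneous space.
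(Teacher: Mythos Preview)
Your approach is essentially the paper's own: the proposition is stated there as an immediate consequence of the identity $(D_K^{(\rho)})^2=|\underline{\rho}-\underline{\mu}|^2$ from Remark~\ref{rem:orbital+Braverman}, with no further argument given. Your expanded version---observing that $|\underline{\rho}-\underline{\mu}|^2$ is $K$-invariant and hence constant on each orbit, so that $(D_K|_{K\cdot x})^{(\rho)}$ squares to a nonnegative scalar---is exactly the intended computation, and the direction $\ker(D_K|_{K\cdot x})^{(\rho)}\neq 0 \Rightarrow x\in Z_\rho$ follows cleanly.

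The concern you flag about whether $L^2(W|_{K\cdot x})^{(\rho)}$ is itself nonzero is legitimate, and the paper does not address it. In fact the ``$\Leftarrow$'' direction as literally stated can fail: if $x\in M^K$ then $\underline{\xi}^M_x=0$ for every $\xi$, so $x\in Z_\rho$ for \emph{every} $\rho\in{\rm Irr}(K)$, whereas $\ker(D_K|_{\{x\}})^{(\rho)}=W_x^{(\rho)}$ vanishes for all but finitely many $\rho$ since $W_x$ is finite-dimensional. Your proposed fix---that the vanishing of $\underline{\rho}-\underline{\mu}$ at $x$ forces $W_x$ to contain a copy of $\rho$---therefore cannot hold in this generality. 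That said, only the ``$\Rightarrow$'' direction is actually used downstream (to verify condition~(\ref{cond:K-acyclic}) on $V_\rho=M\setminus Z_\rho$), and that direction is unaffected; the biconditional is a mild overstatement rather than an essential claim.
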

 
 Let $D$ be the Dirac operator acting on $\Gamma(W)$ which is defined by the connection $\nabla$. 
 For each $\rho\in {\rm Irr}(K)$ we put 
 \[
 V_\rho:=M\setminus Z_{\rho}. 
 \]
  Then since $(D_K^{(\rho)}|_{K\cdot x})^2$ is a strictly positive operator on $\Gamma(W|_{K\cdot x})^{(\rho)}$ 
  for any $x\in V_\rho$ there exists a constant $C_{\rho, x}$ such that 
  \[
  |( (DD_{K}+D_{K}D)s, s )_W|\leq C_{\rho,x}( D_{K}^2 s,s)_W
  \]and 
  \[
  |( (D_{K}s, s )_W|\leq C_{\rho,x}( D_{K}^2 s,s)_W
  \]
  hold for any $s\in \Gamma(W_L|_{K\cdot x})^{(\rho)}$. 
  
\begin{thm}\label{prop:sufficientcondD_K}
If the following conditions are satisfied then $(D_K,\{V_\rho\}_{\rho\in{\rm Irr}(K)})$  
is a $K$-acyclic orbital Dirac-type operator on $(M,W)$ in the sense of Definition~\ref{dfn:K-acyclic orbital system}. 
\begin{enumerate}
\item For each $\rho\in{\rm Irr}(K)$, the critical point set $Z_{\rho}$ is compact. 
\item There exists $C>0$ such that 
\[C^{-1}<\displaystyle\sum_{i=1}^n|\underline{\xi_i^M}|<C
\] on the outside of some compact set in $M$. 
\item For each $\rho\in {\rm Irr}(K)$, we have 
\[\sup\{C_{\rho,x} \ | \ x\in V_\rho\}<\infty. 
\]
\item For each $\rho\in{\rm Irr}(K)$,  we have 
\[
 \inf_{x\in V_\rho}\{\kappa \ | \ \kappa \ {\rm is \ the \ minimum \ eigenvalue \ of} \ (D_K|_{K\cdot x})^2 \ {\rm on } \ L^2(W|_{K\cdot x})^{(\rho)} \} >0.
\]
\end{enumerate}
In particular if $M$ has a cylindrical (resp. periodic) end and all the data have translationally invariance (resp. periodicity), 
then the conditions (2),(3) and (4) are satisfied. 
Moreover if there are two such data, then the product of them satisfies these conditions.  
\end{thm}
 
 By using the above $K$-acyclic orbital Dirac-type operator we have a family of deformations of the Dirac operator $D$, 
 \[
 \hat D_\rho=D+f_\rho^4D_K, 
 \]or 
 \[
 D_{\rho,t}=D+t\varphi_\rho^4 D_K \quad (t \gg 0)
 \]as  in Corollary~\ref{cor:K-Fredholm}, Definition~\ref{dfn:[M]} and Corollary~\ref{cor:FFY=F}. 
 As a consequence of Theorem~\ref{prop:sufficientcondD_K}, Corollary~\ref{cor:K-Fredholm}, Definition~\ref{dfn:[M]} and Corollary~\ref{cor:FFY=F} we have the following.   
 
 \begin{cor}\label{cor:D_K->index}
 Under the condition in Theorem~\ref{prop:sufficientcondD_K} the family of deformations $\hat D_\rho$ (or $D_{\rho, t}$) gives a $K$-equivariant index $[\hat D]=[M,W, D_K]\in R^{-\infty}(K)$ and a {\rm K}-homology cycle 
 which represents it. 
 \end{cor}
 

 For later convenience we investigate the case of Hermitian manifold in detail. 
 We assume that the metric on $M$ is induced from a $K$-invariant Hermitian structure $(g, J)$ 
 and the Clifford module bundle $W$ is given by  
 \[
 W=\wedge^{\bullet}T_{\bbC}M\otimes L
 \]
 for a $K$-equivariant Hermitian line bundle with Hermitian connection $(L,\nabla^L)$ over $M$, 
 where $T_{\bbC}M=TM$ is the vector bundle regarded as a complex vector bundle by $J$. 
 This $W$ carries a structure of $\bbZ/2$-graded ${\rm Cl}(TM)$-module bundle with 
 the Clifford multiplication $c:TM\to {\rm End}(W)$ defined by the exterior product and its adjoint. 
 In this case $\mu$ is a map to $\frakk^*$ determined by 
 \[
 \calL_{\xi}^L-\nabla_{\underline{\xi}^M}^L=\sqrt{-1}\mu(\xi)=\sqrt{-1}\mu_{\xi} \quad (\xi\in{\frakk})
 \]and we have 
 \[
\calL_\xi= \calL_{\xi}^M\otimes{\rm id}+{\rm id}\otimes\calL_{\xi}^L. 
 \]
 
 For $x\in M$ let $H^0(K\cdot x; L|_{K\cdot x})$ be the space of global parallel sections on $(L, \nabla^L)|_{K\cdot x}$, 
 which is a vector space of dimension  at most one.
 Suppose that $H^0(K\cdot x; L|_{K\cdot x})\neq 0$ and $s$ is its non-trivial element, then we have 
 \[
 0=\nabla^L_{\xi}s=(\calL_{\xi}^L-\sqrt{-1}\mu_{\xi})s
 \]for all $\xi\in \frakk$. 
 This equation implies that $\mu_\xi(x)$ is an integer for all $\xi$, 
 and hence, we have 
 the following.  
 
 \begin{prop}\label{prop:BS->Z}
 If $H^0(K\cdot x; L|_{K\cdot x})\neq 0$ for $x\in M$, then we have 
 $\rho:=\mu(x)\in\Lambda^*$ and $H^0(K\cdot x; L|_{K\cdot x})=\bbC_{(\rho)}$, 
 where $\bbC_{(\rho)}$ is the 1-dimensional representation of $K$ whose weight is given by $\rho$. 
 In particular if $H^0(K\cdot x; L|_{K\cdot x})\neq 0$, then we have $K\cdot x\subset Z_\rho$ for $\rho=\mu(x)\in\Lambda^*$. 
 \end{prop}
 
 \begin{Rem}
 If $M=(M,\omega)$ is a symplectic manifold whose dimension is twice of the dimension of $K$,  
 the $K$-action is an effective Hamiltonian torus action 
 and $(L,\nabla^L)$ is a prequantizing line bundle, i.e., the curvature form of $\nabla^L$ is equal to $-\sqrt{-1}\omega$, 
 then the condition $H^0(K\cdot x; L|_{K\cdot x}) \neq 0$ is equivalent to the {\it Bohr-Sommerfeld condition for the orbit $K\cdot x$}, 
 which is essential in the geometric quantization by the {\it real polarization}. 
  
 \end{Rem}

\subsection{Non-complete case and localization formula}
\label{subsec:Non-complete case and localization formula} 
As we will mention in the end of Appendix~\ref{subsec:Relation with Fujita-Furuta-Yoshida type deformation} 
the index associated with the $K$-acyclic orbital Dirac-type operator can be defined for non-complete situation. 
For instance suppose that the first condition\footnote{In Section~\ref{sec:Riemann-Roch character  of proper Hamiltonian torus space} we handle with the 
non-compact fixed point set case using the vanishing of index (Theorem~\ref{thm:vanish}).  } 
 in Theorem~\ref{prop:sufficientcondD_K} is satisfied. 
We take a $K$-invariant compact submanifold $X_\rho$ with boundary as a neighborhood of $Z_{\rho}$ 
and attach a cylinder $\partial X_\rho\times [0,\infty)$ to $\partial X_\rho$ so that we have a $K$-invariant complete Riemannian manifold 
$\tilde X_\rho$ with $K$-invariant cylindrical end. 
Let $\tilde\mu$, $\tilde W$ and $\tilde D_K$ be the extensions of $\mu$, $W$ and $D_K$ on $\tilde X_\rho$ such that 
they have translational invariance and 
$\ker(\tilde D_K^{(\rho)}|_{K\cdot x})=\ker((\tilde D_K^{(\rho)}|_{K\cdot x})^2)=\ker(|\underline{\rho}-\underline{\mu}|^2)=0$ 
for any $x\in \partial X_\rho\times (0,\infty)$. 
These data define a Fredholm operator on $L^2(\tilde W)^{(\rho)}$ as in Corollary~\ref{cor:K-Fredholm}. 
Though we agree that it is a little bit strange notation\footnote{The excision formula guarantees that this index 
defined on a neighborhood of $Z_\rho$ does not depend on a choice of the neighborhood. }, we denote this index by 
\begin{equation}\label{eq:fixedpoint}
[Z_\rho]\in \bbZ. 
\end{equation}
 We decompose 
 \[
 Z_\rho=\mu^{-1}(\rho)\cup\left(\bigcup_{\alpha} Z_{\rho,\alpha}\right)
 \]into the disjoint union of the connected components, where $Z_{\rho,\alpha}$ is a 
 connected component other than $\mu^{-1}(\rho)$. 
 This description enable us to get more refined decomposition of (\ref{eq:fixedpoint})  into the summation of 
 local contributions from each component, which we denote by 
 \[
 [Z_\rho]=[\mu^{-1}(\rho)]+\sum_{\alpha} [Z_{\rho,\alpha}]. 
 \]
The excision formula implies the following localization formula. 

\begin{thm}\label{thm:localizationformula}
If the conditions in Theorem~\ref{prop:sufficientcondD_K} are satisfied, then 
the index $[\hat D]=[M, W, D_K]\in R^{-\infty}(K)$ defined by the $K$-acyclic orbital Dirac-type operator $D_K$ satisfies
\[
[M,W,D_K](\rho)=[Z_\rho]=[\mu^{-1}(\rho)]+\sum_{\alpha} [Z_{\rho,\alpha}]
\]for each $\rho\in {\rm Irr}(K)$. 
\end{thm}

In Section~\ref{sec:Riemann-Roch character  of proper Hamiltonian torus space} we discuss 
the case of Hamiltonian circle action and symplectic toric case. 
In these cases one has the vanishing $[Z_{\rho,\alpha}]=0$, and hence, 
we realize the localization of the index $[M,W,D_K]$ into the lattice points ${\rm Irr}(K)=\Lambda^*$.

 \subsection{Relation with Braverman's deformation}
 \label{sec:Relation with Braverman type deformation}
In \cite{Braverman} Braverman studied 
 a Witten-type deformation of the Dirac operator and its equivariant index on non-compact $K$-manifold.
In a symplectic geometric setting Braverman's deformation is given by the Clifford multiplication of the Hamiltonian vector field of the norm square of the moment map. In particular in the setting in Section~\ref{subsec:Construction of D_K}  (not necessarily $K$ is a torus) we can 
consider the Braverman's deformation as 
\[
D_{\mu}:=D-h\sqrt{-1}c(\underline{\mu}),  
\]where $h:M\to\bbR$ is a $K$-invariant function called an ${\it admissible \ function}$ which satisfies a suitable growth condition. 
Braverman showed several fundamental properties of $D_{\mu}$. 
In particular he showed that $D_{\mu}$ is a $K$-Fredholm operator and the resulting index in $R^{-\infty}(K)$ is 
independent of a choice of the admissible function. 
Moreover the index is equal to Atiyah's transverse index. 
After that his equivariant index has been applied in several directions, for instance, a solution to Vergne's conjecture by Ma-Zhang  \cite{Ma-Zhangarxiv}. 

In this section we consider the same set-up in Section~\ref{subsec:Construction of D_K} 
and assume the followings to make the situation simple. 
\begin{assump}\label{assump:B=F}
 We assume that the conditions in Theorem~\ref{prop:sufficientcondD_K} are satisfied together with the 
cylindrical end condition\footnote{The cylindrical end condition is used to have a uniform estimate on the end. 
It is possible to put weaker assumptions to have the uniform estimate. 
For example we can handle with products of manifolds with cylindrical end.} and ;
 \begin{itemize}
\item The moment map $\mu:M\to {\rm End}(W)\otimes\frakk^*$ defined by Kostant's formula (\ref{eq:momentmap}) 
is proper in the sense that each inverse image of a compact subset of $\frakk$ by $\mu$ is compact. 
\item The differential of the function $|\mu|:M\to \bbR$ is $L^{\infty}$-bounded on the outside of the compact subset $\mu^{-1}(0)$. 
 \end{itemize}
 Note that the second condition is satisfied for the symplectic setting and the genuine moment map $\mu$ by taking $J$ as an 
 $\omega$-compatible almost complex structure. 
\end{assump}
 
 We show the following. 
 
 \begin{thm}\label{thm:B=F}
 Under Assumption~\ref{assump:B=F} we have 
 \[
 {\rm index}_K(D_{\mu})=[\hat D]\in R^{-\infty}(K). 
 \]
 \end{thm}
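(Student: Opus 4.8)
The plan is to work on each isotypic component and there join Braverman's deformation to ours through a continuous family of Fredholm operators. An element of $R^{-\infty}(K)$ is determined by its coefficients, and by definition the $\rho$-coefficient of ${\rm index}_K(D_\mu)$ is ${\rm index}((D_\mu)^{(\rho)})$; by Theorem~\ref{thm:FFY=F} one has $[\hat D](\rho)={\rm index}((D+t\varphi_\rho^4 D_K)^{(\rho)})$ for every sufficiently large $t$. So it suffices to fix $\rho\in{\rm Irr}(K)$ and prove ${\rm index}((D_\mu)^{(\rho)})={\rm index}((D+t\varphi_\rho^4 D_K)^{(\rho)})$ for one (hence every large) $t$. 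By Remark~\ref{rem:orbital+Braverman}, on $L^2(W)^{(\rho)}$ the second operator equals $D^{(\rho)}+\sqrt{-1}\,c(v_1)$ with $v_1:=t\varphi_\rho^4(\underline\rho-\underline\mu)$, while the first is $(D_\mu)^{(\rho)}=D^{(\rho)}+\sqrt{-1}\,c(v_0)$ with $v_0:=-h\,\underline\mu$ and $h$ admissible in the sense of \cite{Braverman}. Both are of the form $D^{(\rho)}+\sqrt{-1}\,c(v)$ for a $K$-invariant vector field $v$, and the task is to connect $v_0$ to $v_1$ through such vector fields while keeping the operator Fredholm.

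I would use the affine path $v_s:=-\big((1-s)h+st\varphi_\rho^4\big)\underline\mu+st\varphi_\rho^4\,\underline\rho$, $s\in[0,1]$ (or, if it simplifies the estimates, first the homotopy $h\rightsquigarrow t\varphi_\rho^4$ keeping the taming field $-\underline\mu$, then the homotopy switching on the $\underline\rho$-term), and put $A_s:=D^{(\rho)}+\sqrt{-1}\,c(v_s)$ on $L^2(W)^{(\rho)}$. Because $c(v_s)$ is a bundle endomorphism---the only first-order-along-orbits piece of $D_K$ has become bounded after restriction to the $\rho$-isotypic component, by condition~(1)(d) of Definition~\ref{dfn:K-acyclic orbital system}---each $A_s$ is elliptic with the principal symbol of $D$, hence has finite propagation speed, and $\{A_s\}$ varies continuously in $s$. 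Granting a uniform-in-$s$ coercivity estimate at infinity (discussed below), $\{A_s\}$ is a coercive family with finite propagation speed, so the $\bbZ/2$-graded index is $s$-independent by the mechanism already used in the proof of Theorem~\ref{thm:FFY=F} (cf. Proposition~\ref{prop:coercive} and \cite[Section~3]{Fujita-Furuta-Yoshida2}). Then ${\rm index}(A_0)={\rm index}(A_1)$, which is exactly the equality we want.

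So everything comes down to finding a compact $L_\rho\subset M$ with $\|A_s\phi\|_W^2\geq\|\phi\|_W^2$ for all $s\in[0,1]$ and all $\phi\in\Gamma_c(W)^{(\rho)}$ supported off $L_\rho$. Squaring gives $A_s^2=D^2+\sqrt{-1}\big(Dc(v_s)+c(v_s)D\big)+|v_s|^2$ on $\Gamma_c(W)^{(\rho)}$, where $Dc(v_s)+c(v_s)D$ is the zeroth-order Clifford contraction of $\nabla v_s$; so it is enough that $|v_s|^2$ dominate $C|\nabla v_s|+1$ off $L_\rho$, uniformly in $s$. Where $|\mu|$ is large this is routine: $|\underline\mu|$ is then large (by properness of $\mu$ together with condition~(2) of Proposition~\ref{prop:sufficientcondD_K}), so $|v_s|\geq\big((1-s)h+st\varphi_\rho^4\big)|\underline\mu|-st\varphi_\rho^4|\underline\rho|$ grows uniformly in $s$, while $|\nabla v_s|$ stays controlled because $d\mu$ is $L^\infty$-bounded (Assumption~\ref{assump:B=F}), $h$ obeys the derivative bound of an admissible function, and the cut-offs have bounded, eventually vanishing derivatives. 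The delicate region is the bounded, cylindrical part of the end: there $\underline\mu$, $\underline\rho$, $\nabla\mu$, $\nabla\underline\rho$ are translationally invariant, hence controlled by the compact cross-section, and $|\underline\rho-\underline\mu|$ is bounded below by the construction of the end (Subsection~\ref{subsec:Non-complete case and localization formula}); using Braverman's freedom to replace $h$, one wants to arrange that the interpolating field $v_s$ stays in a fixed cone about $\underline\rho-\underline\mu$ and in particular never vanishes there, which supplies the needed uniform positive lower bound for $|v_s|$ against the uniformly bounded term $C|\nabla v_s|+1$. This last point is the main obstacle: one must exclude zeros of $v_s$ at infinity uniformly along the homotopy---equivalently, control the zero loci of the interpolating taming vector fields, which a priori are not covered by the compactness hypothesis on $Z_\rho$ alone---and it is here that the cylindrical-end structure, the $L^\infty$-bound on $d\mu$, and a careful (possibly two-stage) choice of homotopy path enter in an essential way; alternatively one may bypass the bad region by combining Braverman's own index theorem with the localization formula of Theorem~\ref{thm:localizationformula} and matching local contributions weight by weight.
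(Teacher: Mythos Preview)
Your strategy---fix $\rho$, write both operators on $L^2(W)^{(\rho)}$ as $D+\sqrt{-1}\,c(v)$ for a $K$-invariant vector field, and interpolate---is exactly the paper's, but the paper makes one simplification that dissolves precisely the obstacle you single out at the end. Rather than first invoking Theorem~\ref{thm:FFY=F} to pass to the bounded-coefficient form $D+t\varphi_\rho^4 D_K$ and then trying to homotope a general admissible $h$ to the bounded function $t\varphi_\rho^4$, the paper stays with $\hat D_\rho=D+f_\rho^4 D_K$ and observes that under Assumption~\ref{assump:B=F} one may take the proper function $f$ with $f=|\mu|$ outside a compact set, and then choose Braverman's admissible function to be $h=f_\rho^4$. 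With that choice both endpoints carry the \emph{same} coefficient $f_\rho^4$, and on $L^2(W)^{(\rho)}$ the straight-line homotopy is simply
\[
\bbD_\epsilon=D+\sqrt{-1}\,f_\rho^4\,c(\epsilon\underline{\rho}-\underline{\mu}),\qquad \epsilon\in[0,1].
\]
The ``zeros of $v_s$ at infinity'' problem you worry about reduces to the zero locus of $\epsilon\underline{\rho}-\underline{\mu}$, and this is compact uniformly in $\epsilon$ because $\mu$ is proper and $M^K$ is compact: outside a fixed compact set $|\epsilon\underline{\rho}-\underline{\mu}|$ is bounded below by a positive constant independent of $\epsilon$. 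The paper then concludes not via a coercivity argument but by checking the third hypothesis of Nicolaescu's criterion (quoted as Lemma~\ref{lem:Nico}): one proves (Lemma~\ref{lem:Nico2}) that there are constants $C,C'>0$ with $C'f_\rho^8\leq\bbD_\epsilon^2+C$ for all $\epsilon\in[0,1]$, whence $\epsilon\mapsto\bbD_\epsilon(1+\bbD_\epsilon^2)^{-1/2}$ is norm-continuous and the indices agree.

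Your more general homotopy (allowing $h$ and $t\varphi_\rho^4$ to differ) can presumably be pushed through, but the growth mismatch between an admissible $h$ and the bounded $t\varphi_\rho^4$ is exactly what forces the ``careful, possibly two-stage'' argument you anticipate; the paper's choice $h=f_\rho^4$ removes that mismatch and turns the key estimate into a single clean inequality.
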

 
 \begin{Rem}
 As it is noted in \cite[Example~5.2]{FujitaS1} the above equality does not hold in general without 
 properness of $\mu$ or completeness of $M$.  
 \end{Rem}
 
 As a corollary of Braverman's index theorem (\cite[Theorem~5.5]{Braverman}) we also have the following. 
 
 \begin{cor}\label{cor:F=Transverse}
Under Assumption~\ref{assump:B=F} $[\hat D]=[M, W,D_K]\in R^{-\infty}(K)$ is equal to the 
transverse index in the sense of Atiyah \cite{Atiyah1}.  
\end{cor} 
 
 We first note that under Assumption~\ref{assump:B=F} 
 we can take $f$ as in Appendix~\ref{sec:Set-up and main assumptions} so that $f=|\mu|$ on the outside of a 
 compact neighborhood of the compact subset $\mu^{-1}(0)$. 
 Moreover we can take an admissible function $h$ to be $f_{\rho}^4=\varphi_{\rho}^4f^4$ for each $\rho\in{\rm Irr}(K)$, where $\varphi_\rho$ is the cut-off function for $V_{\rho}=M\setminus Z_{\rho}$ as in (\ref{eq:cut-off fct}). 

To show  Theorem~\ref{thm:B=F} we fix $\rho\in{\rm Irr}(K)$ and consider the following 1-parameter family in the setting in Section~\ref{subsec:Construction of D_K} : 
\[
{\bbD}_\epsilon:=D+\epsilon f_{\rho}^4D_K-(1-\epsilon)\sqrt{-1}f_\rho^4c(\underline{\mu}) 
\]for $\epsilon\in [0,1]$.
We show that for each $\rho$ an unbounded operator $\bbD_\epsilon^{(\rho)}$ on $L^2(W)^{(\rho)}$ 
gives a norm-continuous family of the bounded transformations such as 
$\displaystyle\frac{\bbD_\epsilon}{\sqrt{1+\bbD_\epsilon^2}}$, and hence, 
the equality 
\[
{\rm index}_K(D_\mu)(\rho)={\rm index}((\bbD_\epsilon)^{(\rho)})={\rm index}(\hat D_\rho)=
[\hat D](\rho)
\]holds. 
We use the following criteria. 

\begin{lem}[Proposition~1.6 in \cite{Nicolaescu}]\label{lem:Nico} 
Let $A_0$ and $A$ be unbounded self-adjoint
operators on a Hilbert space such that ${\rm dom}(A_0) \cap {\rm dom}(A)$ is dense. 
Suppose that the family of operators
$A_\epsilon = A_0 + \epsilon A \ (\epsilon  \geq 0)$ is essentially self-adjoint and 
for each $\epsilon\geq 0$ the following conditions hold:
\begin{enumerate}
\item $A_\epsilon$ has a gap in its spectrum. 
\item ${\rm dom}(A_\epsilon)\subset {\rm dom}(A)$
\item There exists constants $C, C'>0$ such that $C' A^2\leq A_\epsilon^2+C$. 
\end{enumerate} 
Then the family of bounded transforms $\epsilon\mapsto \frac{A_\epsilon}{\sqrt{1+A_\epsilon^2}}$ is norm-continuous. 
\end{lem}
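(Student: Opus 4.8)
The statement is a purely functional-analytic fact about the bounded transform (Riesz map) of a path of self-adjoint operators, so the plan is to follow the standard resolvent route. Write $g(x):=x(1+x^{2})^{-1/2}$, so the operator in question is $g(A_\epsilon)$, defined by the Borel functional calculus of the self-adjoint closure of $A_\epsilon$ (which exists by the assumed essential self-adjointness); note that $g$ is smooth, bounded and $1$-Lipschitz, but $g(x)\to\pm 1$ as $x\to\pm\infty$ with \emph{different} limits — this mismatch is the only genuine difficulty. The first step is to show that $\epsilon\mapsto A_\epsilon$ is norm-resolvent continuous. Fixing $\epsilon_{0}$ and the constants $C,C'$ of hypothesis~(3) at $\epsilon_{0}$, hypothesis~(2) makes the second resolvent identity legitimate on ${\rm dom}(A_{\epsilon_0})$ and gives $(i-A_\epsilon)^{-1}-(i-A_{\epsilon_0})^{-1}=(\epsilon-\epsilon_{0})(i-A_\epsilon)^{-1}A(i-A_{\epsilon_0})^{-1}$; applying the form bound $C'A^{2}\le A_{\epsilon_0}^{2}+C$ to $\psi=(i-A_{\epsilon_0})^{-1}\phi$ (which lies in ${\rm dom}(A_{\epsilon_0})\subseteq{\rm dom}(A)$) yields $\|A(i-A_{\epsilon_0})^{-1}\|\le\sqrt{(1+C)/C'}$, hence $\|(i-A_\epsilon)^{-1}-(i-A_{\epsilon_0})^{-1}\|\le\sqrt{(1+C)/C'}\,|\epsilon-\epsilon_{0}|$. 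As a standard consequence, $\epsilon\mapsto\varphi(A_\epsilon)$ is norm-continuous for every $\varphi\in C_{0}(\bbR)$, since such $\varphi$ are uniform limits of polynomials in the resolvents.

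The next step — where hypothesis~(1) is used — removes the part of $g$ that does not lie in $C_{0}(\bbR)$. A spectral gap $(c_{1},c_{2})$ of $A_{\epsilon_0}$ persists under norm-resolvent convergence, so for $\epsilon$ near $\epsilon_{0}$ one has ${\rm spec}(A_\epsilon)\cap[c_{1}',c_{2}']=\emptyset$ for fixed $c_{1}<c_{1}'<c_{2}'<c_{2}$. Pick $c'\in(c_{1}',c_{2}')$ and write $g(x)=\mathrm{sgn}(x-c')+\varphi_{0}(x)$; then $\varphi_{0}(x)=g(x)-\mathrm{sgn}(x-c')$ is bounded, continuous on $\bbR\setminus\{c'\}$, and tends to $0$ at $\pm\infty$, so it agrees off $(c_{1}',c_{2}')$ with some genuinely continuous $\tilde\varphi_{0}\in C_{0}(\bbR)$. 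Since ${\rm spec}(A_\epsilon)$ avoids $(c_{1}',c_{2}')$, the functional calculus gives $g(A_\epsilon)=(I-2P_\epsilon)+\tilde\varphi_{0}(A_\epsilon)$ with $P_\epsilon:=\chi_{(-\infty,c')}(A_\epsilon)$. The term $\tilde\varphi_{0}(A_\epsilon)$ is norm-continuous by the previous step; and $\epsilon\mapsto P_\epsilon$ is norm-continuous near $\epsilon_{0}$ because it is the spectral projection cut off by a persistent gap — the classical Kato-type fact, proved from the uniformly bounded, norm-continuous resolvent $(c'-A_\epsilon)^{-1}$ at the interior point $c'$ of the gap. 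Combining the two pieces gives norm-continuity of $\epsilon\mapsto g(A_\epsilon)$ at $\epsilon_{0}$, and $\epsilon_{0}$ was arbitrary.

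The main obstacle is exactly the non-decay of $g$ at $\pm\infty$: a direct attack through the representation $g(A)=\tfrac{2}{\pi}\int_{0}^{\infty}A(1+t^{2}+A^{2})^{-1}\,dt$ only produces a $(1+t^{2})^{-1/2}$ decay in the difference of integrands, which is not integrable, so naive operator-norm estimates diverge logarithmically. Hypothesis~(1) is what repairs this, by letting one peel off the non-$C_{0}$ part of $g$ as the function $x\mapsto\mathrm{sgn}(x-c')$, which is locally constant on the spectrum and hence a difference of spectral projections; the continuity of that projection is then the only delicate input, and it is standard once the gap is known to persist. Hypotheses~(2) and~(3) do the complementary job of promoting the algebraic perturbation $A_\epsilon=A_{0}+\epsilon A$, in which $A$ may be unbounded, to an honest norm-resolvent-continuous path. (In the use of this lemma in Section~\ref{sec:Relation with Braverman type deformation}, hypothesis~(1) is supplied by the coercivity estimate of Proposition~\ref{prop:coercive} and hypothesis~(3) by the estimates built into Definition~\ref{dfn:K-acyclic orbital system}; the full functional-analytic argument is in \cite{Nicolaescu}.)
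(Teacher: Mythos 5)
The paper does not prove this lemma at all: it is imported verbatim as Proposition~1.6 of \cite{Nicolaescu}, so your attempt has to be judged on its own merits. Your first step is fine: hypotheses (2)--(3) at $\epsilon_0$ give $\|A(i-A_{\epsilon_0})^{-1}\|\leq\sqrt{(1+C)/C'}$, hence local Lipschitz norm-resolvent continuity of $\epsilon\mapsto A_\epsilon$, hence norm continuity of $\varphi(A_\epsilon)$ for every $\varphi\in C_0(\mathbb{R})$. The genuine gap is in the second step. The statement you invoke --- that a uniformly bounded, norm-continuous resolvent at an interior point $c'$ of a persistent gap forces norm continuity of $P_\epsilon=\chi_{(-\infty,c')}(A_\epsilon)$ --- is not the classical Kato fact and is false at this level of generality. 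Kato's argument writes the spectral projection as a Riesz contour integral, which requires the spectral piece below the gap to be enclosed by a compact contour inside the common resolvent set, i.e.\ it needs that piece to be bounded (or the operators uniformly semibounded). Without that, spectrum can migrate ``through infinity'' without ever crossing the gap: on $\ell^2(\mathbb{N})$ let $T=\mathrm{diag}(1,2,3,\dots)$ and let $T_n$ agree with $T$ except that the $n$-th diagonal entry is $-n$. Then $\|(i-T_n)^{-1}-(i-T)^{-1}\|=2n/(1+n^2)\to 0$, every $T_n$ and $T$ has the common gap $(5/4,7/4)$, the resolvents at $c'=3/2$ are uniformly bounded and norm-convergent, yet $\|\chi_{(-\infty,3/2)}(T_n)-\chi_{(-\infty,3/2)}(T)\|=1$ for all $n\geq 2$, and the bounded transforms stay at distance about $2$. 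So the data you actually use (norm-continuous resolvents plus a persistent gap) cannot by themselves yield continuity of $P_\epsilon$.

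This counterexample is of course not of the lemma's form (there is no fixed $A$, and conditions (2)--(3) would fail for it along a linear path), so the lemma itself is not threatened; but it shows exactly where your argument stops being a proof. Indeed, given the gap, your own decomposition $g(A_\epsilon)=(I-2P_\epsilon)+\tilde\varphi_0(A_\epsilon)$ with $\tilde\varphi_0\in C_0(\mathbb{R})$ shows that norm continuity of $P_\epsilon$ is \emph{equivalent} to the norm continuity of the bounded transform being proved, so the hard point has been reformulated rather than resolved. What is missing is the place where hypotheses (2)--(3) must be used at every $\epsilon$ (not only at $\epsilon_0$ to get resolvent continuity): the relative bound $C'A^2\leq A_\epsilon^2+C$ along the whole path is what rules out spectral migration through $\pm\infty$ and controls the high-energy part of $g(A_\epsilon)-g(A_{\epsilon_0})$, where the naive integral representation diverges logarithmically as you note. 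Supplying that control is the actual content of Nicolaescu's proof, which this paper cites rather than reproduces; as written, your argument would need that estimate (or an equivalent argument for the continuity of $P_\epsilon$) to be complete.
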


As in \cite[Remark~4.10]{Loizides-Song2} it suffices to show the third condition in Lemma~\ref{lem:Nico} in our situation.

Hereafter we mainly consider the isotypic component of operators.  
Even if so we often omit the superscript $(\cdot)^{(\rho)}$ of the isotypic component for simplicity and 
use the notation as $D:L^2(W)^{(\rho)}\to L^2(W)^{(\rho)}$ and so on. 
As we noted in Remark~\ref{rem:orbital+Braverman} 
one can write as $\displaystyle D_K=\sqrt{-1}c(\underline{\rho}-\underline{\mu})$ on $L^2(W)^{(\rho)}$, and hence,  we have 
\begin{eqnarray*}
\bbD_\epsilon&=&D+ f_\rho^4\sqrt{-1}\left(\epsilon c(\underline{\rho}-\underline{\mu})-(1-\epsilon)c(\underline{\mu})\right) \\ 
&=& D+f_{\rho}^4\sqrt{-1}c(\epsilon\underline{\rho}-\underline{\mu}) \\
&=&D-f_\rho^4\sqrt{-1}c(\underline{\mu})+\epsilon f_\rho^4\sqrt{-1}c(\underline{\rho}) . 
\end{eqnarray*}

Then the third condition in Lemma~\ref{lem:Nico} is equivalent to 
\[
C'\left(f_\rho^4\sqrt{-1}c(\underline{\rho})\right)^2
\leq (\bbD_\epsilon)^2+C. 
\]for some constants $C, C'>0$. 
Since 
\[
\left(\sqrt{-1}c(\underline{\rho})\right)^2=
\left|\underline{\rho}\right|^2\leq \sum_{i}|\rho(\xi_i)\underline{\xi_i^M}|^2
\]by using an orthonormal basis $\{\xi_1,\ldots, \xi_n\}$ of $\frakk$, 
and our boundedness condition on $|\underline{\xi_i^M|}$ it suffices to show the following.  

\begin{lem}\label{lem:Nico2}
There exist constants $C, C'>0$ such that 
\[
C'f_{\rho}^8\leq (\bbD_\epsilon)^2+C 
\]
holds for all $\epsilon\in [0,1]$. 
\end{lem}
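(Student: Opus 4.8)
The plan is to prove the inequality directly, by a Weitzenb\"ock-type expansion of $(\bbD_\epsilon)^2$ on $L^2(W)^{(\rho)}$ in the spirit of the proof of Proposition~\ref{prop:compactresolvent}, but now keeping track of the zeroth-order terms produced by $\underline{\mu}$. Since $\bbD_\epsilon$ depends affinely on $\epsilon$ and $(\bbD_\epsilon)^2$ depends quadratically on it, all the constants and auxiliary compact sets below may be chosen locally uniformly in $\epsilon$, so it suffices to treat each fixed $\epsilon\in[0,1]$, the uniformity over the compact interval $[0,1]$ being then automatic.

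Write $A_\epsilon:=\sqrt{-1}f_\rho^4c(\epsilon\underline{\rho}-\underline{\mu})=\sqrt{-1}c(V_\epsilon)$ with $V_\epsilon:=f_\rho^4(\epsilon\underline{\rho}-\underline{\mu})$, a vector field tangent to the $K$-orbits; thus $A_\epsilon$ is a self-adjoint bundle endomorphism with $A_\epsilon^2=f_\rho^8\,|\epsilon\underline{\rho}-\underline{\mu}|^2\geq0$. First I would expand
\[
(\bbD_\epsilon)^2=D^2+(DA_\epsilon+A_\epsilon D)+A_\epsilon^2
\]
and compute the middle term by the Bochner identity $DA_\epsilon+A_\epsilon D=-2\sqrt{-1}\nabla_{V_\epsilon}+\sqrt{-1}\sum_a c(e_a)c(\nabla_{e_a}V_\epsilon)$ for a local orthonormal frame $\{e_a\}$. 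Because $f_\rho$ is $K$-invariant and $\epsilon\underline{\rho}-\underline{\mu}$ is tangent to orbits one has $\nabla_{V_\epsilon}=f_\rho^4\nabla_{\epsilon\underline{\rho}-\underline{\mu}}$, and on the isotypic component $\calL_{\xi_i}=\sqrt{-1}\rho(\xi_i)$ (Remark~\ref{rem:orbital+Braverman}); substituting Kostant's formula \eqref{eq:momentmap} for $\nabla_{\underline{\xi_i}^M}$ and completing the square turns $-2\sqrt{-1}\nabla_{V_\epsilon}$ into the multiplication operator $2f_\rho^4\bigl(|\mu-\tfrac{1+\epsilon}{2}\rho|^2-\tfrac{(1-\epsilon)^2}{4}|\rho|^2\bigr)$, which is bounded below by $-\tfrac12|\rho|^2f_\rho^4$. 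The remaining zeroth-order term has pointwise operator norm $\lesssim\|\nabla V_\epsilon\|\lesssim\|df_\rho\|_\infty\,f_\rho^3\,|\epsilon\underline{\rho}-\underline{\mu}|+f_\rho^4\,\|\nabla(\epsilon\underline{\rho}-\underline{\mu})\|$, and off a compact set the last factor is $\lesssim f$ thanks to the $L^\infty$-bound on $d\mu$ in Assumption~\ref{assump:B=F}, the bound on $|\underline{\xi_i}^M|$ in Proposition~\ref{prop:sufficientcondD_K}, and the cylindrical-end control of $\nabla\underline{\xi_i}^M$ and $\nabla\underline{\rho}$.

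With these in hand I would bound the cross terms against the nonnegative quantities $D^2$ and $A_\epsilon^2=f_\rho^8|\epsilon\underline{\rho}-\underline{\mu}|^2$ together with the error term $\|s\|^2$, using elementary Young inequalities: $f_\rho^k\leq\eta f_\rho^8+C_\eta$ (valid because $f_\rho\geq1$ off a relatively compact set and is bounded on it), $f_\rho^4 f\leq\eta f_\rho^8+C_\eta$ (using $f=|\mu|$ off a compact set), and $f_\rho^3|\epsilon\underline{\rho}-\underline{\mu}|\leq\delta f_\rho^8|\epsilon\underline{\rho}-\underline{\mu}|^2+C_\delta$. Dropping $D^2\geq0$ and the positive square in the $\nabla_{V_\epsilon}$-term, this yields
\[
(\bbD_\epsilon)^2\ \geq\ (1-\delta)A_\epsilon^2-(\eta+\eta')f_\rho^8-C''
\]
as quadratic forms, for arbitrarily small $\delta,\eta,\eta'>0$. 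The proof would then be completed by the positivity input
\[
A_\epsilon^2=f_\rho^8|\epsilon\underline{\rho}-\underline{\mu}|^2\ \geq\ \beta_0\,f_\rho^8-C'''
\]
for some $\beta_0>0$ independent of $\epsilon$, after which choosing $\delta,\eta,\eta'$ so that $(1-\delta)\beta_0-\eta-\eta'\geq\beta_0/2$ gives the lemma with $C'=\beta_0/2$.

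The main obstacle is precisely this last inequality, which amounts to saying that $|\epsilon\underline{\rho}-\underline{\mu}|$ is bounded away from $0$ outside a compact subset of $M$, uniformly in $\epsilon\in[0,1]$. Since $|\underline{\rho}|$ is bounded by Proposition~\ref{prop:sufficientcondD_K}, a sequence $x_n\to\infty$ with $|\epsilon_n\underline{\rho}(x_n)-\underline{\mu}(x_n)|\to0$ would force $|\underline{\mu}(x_n)|$ to stay bounded, so what has to be ruled out is an escape to infinity along which $\underline{\mu}$ stays bounded while $|\mu|\to\infty$ (Assumption~\ref{assump:B=F}). Such a sequence would have to approach the locus of positive-dimensional isotropy, along which, by the cylindrical-end hypothesis, only finitely many isotropy subgroups occur at infinity and the component of $\mu$ in the isotropy directions is locally constant, hence bounded --- contradicting $|\mu|\to\infty$; making this precise, using equivariance of $\mu$ and the translational invariance at the end, is the technical heart of the argument. (For $\epsilon=1$ the estimate is immediate from condition~(4) of Proposition~\ref{prop:sufficientcondD_K}; the real content is the uniformity down to $\epsilon=0$, i.e.\ to Braverman's deformation.)
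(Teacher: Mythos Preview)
Your proof is correct and lands on the same endgame as the paper, but the route to controlling the cross term is genuinely different. The paper expands
\[
(\bbD_\epsilon)^2=D^2+\sqrt{-1}\bigl(4f_\rho^3 c(df_\rho)c(\epsilon\underline{\rho}-\underline{\mu})
+f_\rho^2(Dc(\epsilon\underline{\rho}-\underline{\mu})+c(\epsilon\underline{\rho}-\underline{\mu})D)f_\rho^2\bigr)
+f_\rho^8|\epsilon\underline{\rho}-\underline{\mu}|^2
\]
and bounds the anticommutator $Dc(\epsilon\underline{\rho}-\underline{\mu})+c(\epsilon\underline{\rho}-\underline{\mu})D$ \emph{abstractly}, in the spirit of the $C_\rho$--estimate of Definition~\ref{dfn:K-acyclic orbital system}(4) (exactly as in Proposition~\ref{prop:compactresolvent}), invoking the cylindrical end only to make the constant $C_2$ uniform along the end. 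It then uses $f_\rho=|\mu|$ at infinity so that all error terms are polynomials in $f_\rho$ of degree $\leq 7$, dominated by the positive $f_\rho^8$ term. Your Bochner--Kostant computation instead \emph{resolves} the first-order piece of the cross term explicitly: on the $\rho$-isotypic component $-2\sqrt{-1}\nabla_{V_\epsilon}=2f_\rho^4\langle\epsilon\rho-\mu,\rho-\mu\rangle$, which you recognize (via completing the square) as an almost nonnegative multiplication operator. This buys a conceptually cleaner lower bound at the cost of having to control $\|\nabla(\epsilon\underline{\rho}-\underline{\mu})\|$ by hand; the paper's approach hides that estimate inside the uniform $C_2$. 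Both arguments reduce to the same crucial input --- that $|\epsilon\underline{\rho}-\underline{\mu}|$ is bounded below outside a compact set, uniformly in $\epsilon\in[0,1]$ --- which the paper states in one line (``since $\mu$ is proper and $M^K$ is compact'') and for which you supply a more detailed justification via the finiteness of isotropy types on the cylindrical end. One caveat: your Bochner step tacitly treats $\underline{\mu}$ as an honest vector field, i.e.\ assumes $\mu$ is scalar-valued; this is fine for the Hamiltonian applications but is slightly narrower than the ${\rm End}(W)$-valued framing of Assumption~\ref{assump:B=F}.
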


\begin{proof}On $L^2(W)^{(\rho)}$ we have 
\begin{eqnarray*}
(\bbD_\epsilon)^2&=&D^2+\sqrt{-1}\left(Df_\rho^4c(\epsilon\underline{\rho}-\underline{\mu})+f_\rho^4c(\epsilon\underline{\rho}-\underline{\mu})D\right)+
f_\rho^8|\epsilon\underline{\rho}-\underline{\mu}|^2 \\
&=&D^2+\sqrt{-1}\left(4f_\rho^3 c(df_\rho)c(\epsilon\underline{\rho}-\underline{\mu})+
f_{\rho}^2(Dc(\epsilon\underline{\rho}-\underline{\mu})+c(\epsilon\underline{\rho}-\underline{\mu})D)f_\rho^2\right)
+f_\rho^8|\epsilon\underline{\rho}-\underline{\mu}|^2. 
\end{eqnarray*}
On the other hand there exist constants $C_1>0$ and $C_2>0$ such that 
\begin{eqnarray*}
|c(df_\rho)c(\epsilon\underline{\rho}-\underline{\mu})|\leq 
\|df_\rho\||\epsilon\underline{\rho}-\underline{\mu}|
\leq C_1 |\epsilon\rho-\mu|
\end{eqnarray*}and 
\[
|f_{\rho}^2(Dc(\epsilon\underline{\rho}-\underline{\mu})+c(\epsilon\underline{\rho}-\underline{\mu})D)f_\rho^2|
\leq C_2|\epsilon\rho-\mu|f_{\rho}^4D_K^2=C_2f_\rho^4|\epsilon\rho-\mu|^3,  
\]
where we get the inequality in a similar way as the proof of Proposition~\ref{prop:compactresolvent} 
and we use the assumption on cylindrical end so that we can take $C_2$ uniformly.  
So we have 
\[
(\bbD_\epsilon)^2\geq  -4C_1 f_{\rho}^3|\epsilon\rho-\mu|-C_2f_\rho^4|\epsilon\rho-\mu|^3+
f_\rho^8|\epsilon\underline{\rho}-\underline{\mu}|^2. 
\]
On the other hand since $\mu$ is proper and $M^K$ is compact  
$|\epsilon\underline{\rho}-\underline{\mu}|$ is uniformly positive on the outside of a compact subset, 
and hence, there exists $C'>0$ such that 
\[
f_{\rho}^8|\epsilon\underline{\rho}-\underline{\mu}|+1>2C'f_{\rho}^8. 
\]
Since $f_\rho=|\mu|$ on the outside of a compact subset there exists $C>0$ independent from $\epsilon\in [0,1]$ such that 
\[
-4C_1 f_{\rho}^3|\epsilon\rho-\mu|-C_2f_\rho^4|\epsilon\rho-\mu|^3-1+C'f_{\rho}^8>-C. 
\]
Finally we have 
\[
(\bbD_\epsilon)^2>(1-C'f_{\rho}^8-C)+(2C'f_{\rho}^8-1)=C'f_{\rho}^8-C
\]
and hence, 
$(\bbD_\epsilon)^2+C>C'f_\rho^8$. 
\end{proof}

\section{Product fomula}
\label{sec:Product fomula}
For later convenience we summarize the product formula for our index and some useful formulas derived from it. 
Instead of giving full general setting we explain typical two situations which will be used in the subsequent sections.  
We follow the basic formulation of the product formula of indices as in \cite{Atiyah-Singer}, and 
we give a formulation to adapt that in \cite[Section~3.3]{Fujita-Furuta-Yoshida2}.
Though we use terminologies in Appendix~\ref{sec:Set-up and main assumptions}, 
the main applications are the acyclic orbital Dirac-type operators constructed in Section~\ref{sec:Example of $D_K$} and Theorem~\ref{prop:sufficientcondD_K}.

\subsection{Direct product}
\label{subsec:Direct product}
For $i=0,1$ let $K_i$ be a torus. 
Let $M_i$ be a complete Riemannian manifold and $W_i\to M_i$ a $\bbZ/2$-graded Clifford module bundle on which 
$K_i$ acts in an isometric way. 
Suppose that there exists a $K_i$-acyclic orbital Dirac-type operator $(D_{K_i}, \{V_{i,\rho_i}\}_{\rho_i\in {\rm Irr}(K_i)})$ 
on $(M_i, W_i)$. 
Put $M:=M_0\times M_1$ and define a Clifford module bundle $W$ over $M$ by the outer tensor product 
\[
W:=W_0\boxtimes W_1 
\] for the projections onto the first and second factor of $M$. 
For $\rho=(\rho_0, \rho_1)\in {\rm Irr}(K_0)\times {\rm Irr}(K_1)$ we define $V_{\rho}$ by 
\[
V_{\rho}:={V_{0,\rho_0}}\times V_{1,\rho_1} 
\]whose complement in $M$ is compact. 
Let $D_K:\Gamma(W)\to \Gamma(W)$ be an operator defined by 
\[
 D_K:= D_{K_0}\otimes {\rm id}+\varepsilon_{W_0}\otimes D_{K_1}=D_{K_0}+\varepsilon_{W_0}D_{K_1}, 
\]where $\varepsilon_{W_0}: W_0\to  W_0$ is the grading operator on $W_0$. 
Since $D_{K_0}(\varepsilon_{W_0}D_{K_1})+(\varepsilon_{W_0}D_{K_1})D_{K_0}=0$ 
one has the following. 
\begin{lem}
$(D_K, \{V_{\rho}\}_{\rho\in {\rm Irr}(K)})$ is a $K$-acyclic orbital Dirac-type operator on $(M,W)$. 
\end{lem}
Dirac operators $D_i$ on $W_i$ give rise the Dirac operator $D$ on $W$; 
\[
 D:= D_{0}\otimes {\rm id}+\varepsilon_{ W_0}\otimes D_{1}= D_{0}+\varepsilon_{ W_0}D_{1}. 
\]
For each $\rho_i\in {\rm Irr}(K_i)$ we take a $K_i$-invariant cut-off function $\varphi_{i, \rho_i}$ on $M_i$ 
with $\varphi_{i,\rho_i}|_{M_i\setminus V_{i,\rho_i}}\equiv 0$ as in (\ref{eq:cut-off fct}). 
For $\rho=(\rho_1, \rho_2)\in{\rm Irr}(K)$ define a function $\varphi_\rho:M\to[0,1]$ by 
$\varphi_\rho:=\varphi_{0,\rho_0} \varphi_{1,\rho_1}$, which gives a cut-off function with 
$\varphi_{\rho}|_{\wtilde M\setminus \wtilde V_{\rho}}\equiv 0$. 
Then we have a Fredholm operator on $L^2(W)^{(\rho)}$ as the deformation 
\[
\hat D_\rho= D+t\varphi_\rho^4 D_K \quad (t\gg 0). 
\]
In particular we have the index 
\[
{\rm index}(\hat D_\rho)=[M](\rho)\in \bbZ. 
\]
On the other hand we have the sum of the deformations 
\[
\hat D_\rho'=(D_0+t\varphi_{0,\rho_0}^4D_{K_0})+\varepsilon_{W_0}(D_1+t\varphi_{1,\rho_1}^4D_{K_1})=
D+t(\varphi_{0,\rho_0}^4D_{K_0}+\varepsilon_{W_0}\varphi_{1,\rho_1}^4D_{K_1}), 
\]which is also Fredholm on $L^2(W)^{(\rho)}$. 
In fact by using the similar estimate in the proof of Proposition~\ref{prop:coercive} 
one can see that $\hat D_\rho'$ is {\it coercive} (see \cite{Anghel} or Proposition~\ref{prop:coercive}) on the outside of a compact subset containing 
$\varphi_{0,\rho_0}^{-1}(0)\cup \varphi_{1,\rho_1}^{-1}(0)=\varphi_\rho^{-1}(0)$. 

\begin{lem}\label{lem:sum=product}
${\rm index}(\hat D_{\rho}')={\rm index}(\hat D_\rho)=[M](\rho)$. 
\end{lem}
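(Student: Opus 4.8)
\emph{Plan.} Since ${\rm index}(\hat D_\rho)=[M](\rho)$ has just been recorded, it suffices to prove ${\rm index}(\hat D_\rho')={\rm index}(\hat D_\rho)$. I would connect $\hat D_\rho$ to $\hat D_\rho'$ by an explicit homotopy through deformations of $D$ that are coercive outside one fixed compact set and have finite propagation speed on $L^2(W)^{(\rho)}$, and then invoke the invariance of the $\bbZ/2$-graded Fredholm index along such a family (\cite[Section~3]{Fujita-Furuta-Yoshida2}, as recalled after Theorem~\ref{thm:FFY=F}).

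\emph{The homotopy.} Take $\varphi_\rho$ to be a $K$-invariant cut-off for $V_\rho$ which equals $1$ on the cylindrical end of $M$ (this may be assumed without changing $[M](\rho)$, e.g. $\varphi_\rho:=1-(1-\varphi_{0,\rho_0})(1-\varphi_{1,\rho_1})$); then $D+t\varphi_\rho^4 D_K$ still has index $[M](\rho)$. Using $\varphi_\rho^4 D_K=\varphi_\rho^4 D_{K_0}+\varphi_\rho^4\varepsilon_{W_0}D_{K_1}$, interpolate affinely the coefficient of $D_{K_0}$ from $\varphi_\rho^4$ to $\varphi_{0,\rho_0}^4$ and that of $\varepsilon_{W_0}D_{K_1}$ from $\varphi_\rho^4$ to $\varphi_{1,\rho_1}^4$:
\[
\bbD_s:=D+t\,a_0^{(s)}D_{K_0}+t\,a_1^{(s)}\varepsilon_{W_0}D_{K_1},\qquad
a_0^{(s)}:=(1-s)\varphi_\rho^4+s\varphi_{0,\rho_0}^4,\quad a_1^{(s)}:=(1-s)\varphi_\rho^4+s\varphi_{1,\rho_1}^4,
\]
for $s\in[0,1]$, so that ${\rm index}(\bbD_0^{(\rho)})=[M](\rho)$ and $\bbD_1=\hat D_\rho'$. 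Each $a_i^{(s)}$ is a $K$-invariant smooth function with values in $[0,1]$, with $\|da_i^{(s)}\|_\infty$ bounded uniformly in $s$, and equal to $1$ on the corresponding cylindrical end. Since $D_{K_i}^{(\rho_i)}$ is a bounded operator of order zero, $\bbD_s^{(\rho)}$ has the same principal symbol as $D$; hence it is elliptic, essentially self-adjoint, of finite propagation speed, and $s\mapsto\bbD_s^{(\rho)}-\bbD_0^{(\rho)}$ is a norm-continuous family of bounded operators.

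\emph{Uniform coercivity (the crux).} I claim there exist a compact $\calK\subset M$ and $c_\rho>0$, independent of $s$, such that for $t\gg 0$ one has $\|\bbD_s u\|_W^2\geq t\,c_\rho\|u\|_W^2$ for all $u\in\Gamma_c(W)^{(\rho)}$ with ${\rm supp}(u)\subset M\setminus\calK$ and all $s\in[0,1]$; granting this, $\{\bbD_s^{(\rho)}\}$ is a coercive family of finite propagation speed, so its index is constant in $s$ and the lemma follows. To prove the claim I would square $\bbD_s$ on $\Gamma_c(W)^{(\rho)}$ as in the proofs of Proposition~\ref{prop:coercive} and Lemma~\ref{lem:Nico2}. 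As $D_{K_0}$ and $\varepsilon_{W_0}D_{K_1}$ anti-commute, the zeroth-order part is $t^2\big((a_0^{(s)})^2 D_{K_0}^2+(a_1^{(s)})^2 D_{K_1}^2\big)$; the mixed terms with $D$ split into Clifford pieces $t\,c(da_i^{(s)})(\cdots)$, handled by the uniform bounds on $\|da_i^{(s)}\|_\infty$, and anti-commutator pieces $t\,a_i^{(s)}(\cdots)$, which by the identities $DD_{K_0}+D_{K_0}D=(D_0 D_{K_0}+D_{K_0}D_0)\otimes{\rm id}$ and $D(\varepsilon_{W_0}D_{K_1})+(\varepsilon_{W_0}D_{K_1})D={\rm id}\otimes(D_1 D_{K_1}+D_{K_1}D_1)$ (the cross terms cancelling because $D_i$ and $D_{K_i}$ are odd) split over the factors and are controlled by condition (5) of Definition~\ref{dfn:K-acyclic orbital system} for $(D_{K_i},V_{i,\rho_i})$ on $(M_i,W_i)$. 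One then checks positivity piece by piece on the end, which, with $X_i$ a compact core of $M_i$ containing $Z_{i,\rho_i}$, is covered by $\mathrm{cyl}(M_0)\times X_1$, $X_0\times\mathrm{cyl}(M_1)$ and $\mathrm{cyl}(M_0)\times\mathrm{cyl}(M_1)$: on the first, $a_0^{(s)}\equiv 1$, $da_0^{(s)}=0$, $D_{K_0}^2$ is bounded below by a positive constant, and all $M_1$-quantities are uniformly bounded operators since $X_1$ is compact, whence $\|\bbD_s u\|^2\geq\tfrac12 t^2 c\,\|u\|^2$ for large $t$; symmetrically on the second; and on the third $a_0^{(s)}=a_1^{(s)}=1$, so $\|\bbD_s u\|^2\geq\tfrac12 t^2\|D_K u\|^2$. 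A partition-of-unity (IMS) argument glues these estimates into coercivity on all of $M\setminus(X_0\times X_1)$, with $\calK:=X_0\times X_1$.

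\emph{Main obstacle.} The difficulty is precisely this uniform-in-$s$ coercivity outside a single fixed compact set: the two coefficients $a_0^{(s)}$ and $a_1^{(s)}$ degenerate in genuinely different regions of the non-compact end, so one must argue, in each region, that at least one of $D_{K_0}^2$, $D_{K_1}^2$ is uniformly positive there while its coefficient stays bounded away from zero — which is exactly where the shape of $V_\rho=(V_{0,\rho_0}\times M_1)\cup(M_0\times V_{1,\rho_1})$, the acyclicity of each factor, and the translational uniformity on the cylindrical ends all have to be used together. Everything else is the same squaring-and-collecting computation already carried out in Proposition~\ref{prop:coercive} and Lemma~\ref{lem:Nico2}.
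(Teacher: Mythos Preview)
Your approach is correct and is essentially the same as the paper's: connect $\hat D_\rho$ to $\hat D_\rho'$ by an explicit one-parameter family of deformations of $D$ that is uniformly coercive outside a fixed compact set, and then invoke the invariance of the Fredholm index along such a family (as in the argument for Proposition~\ref{prop:coercive}). The paper's one-line proof uses the slightly different homotopy $\varphi_{0,\rho_0}^4\varphi_{1,\rho_1}^{4\delta}D_{K_0}+\varepsilon_{W_0}\varphi_{0,\rho_0}^{4\delta}\varphi_{1,\rho_1}^4D_{K_1}$ ($\delta\in[0,1]$) in place of your affine interpolation and your modified cut-off $\varphi_\rho=1-(1-\varphi_{0,\rho_0})(1-\varphi_{1,\rho_1})$, but the underlying mechanism --- coercivity of the family plus finite propagation speed --- is identical.
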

\begin{proof}
This follows from the fact that the deformation of $D$ by 
\[
\varphi_{0,\rho_0}^4\varphi_{1,\rho_1}^{4\delta}D_{K_0}+\varepsilon_{W_0}\varphi_{0,\rho_0}^{4\delta}\varphi_{1,\rho_1}^4D_{K_1} 
\quad (0\leq\delta\leq 1)
\]gives a family of coercive operators by using the similar argument in the proof of Proposition~\ref{prop:coercive}.   
\end{proof}

Now consider the Fredholm operator $D_1+t\varphi_{1,\rho_1}^4D_{K_1}$ on $L^2(W_1)^{(\rho_1)}$ 
and we put 
\[
E_{\rho_1}:=\ker(D_1+t\varphi_{1,\rho_1}^4D_{K_1})=E^+_{\rho_1}\oplus E^-_{\rho_1}
\]as the $\bbZ/2$-graded finite dimensional vector space.  
Then there is a natural embedding 
\[
L^2(W_0\otimes E_{\rho_1})^{(\rho_0)}\to L^2(W)^{(\rho)} 
\]whose image is preserved by $(D_0+t\varphi_{\rho_0}^4D_{K_0})\otimes{\rm id}$. 
Let $D_{\rho_0,E_{\rho_1}}$ be the restriction of $(D_0+t\varphi_{\rho_0}^4D_{K_0})\otimes{\rm id}$ on this image, 
which gives a Fredholm operator on $L^2(W_0\otimes E_{\rho_1})^{(\rho_0)}$. 
\begin{prop}\label{prop:productformula1}
We have 
\[
[M](\rho)={\rm index}(D_{\rho_0,E_{\rho1}}). 
\]If we write ${\rm index}(D_0+t\varphi_{\rho_0}^4D_{K_0})=E_{\rho_0}^+-E_{\rho_0}^-$ as an element in 
the K-group K(pt)$\cong\bbZ$, then we have  
\[
[M](\rho)=(E_{\rho_0}^+-E_{\rho_0}^-)\otimes (E_{\rho_1}^+-E_{\rho_1}^-). 
\]
\end{prop}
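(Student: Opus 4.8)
The plan is to prove Proposition~\ref{prop:productformula1} by combining Lemma~\ref{lem:sum=product} with a standard ``finite-dimensional fibre'' reduction for the index of a sum of commuting/anticommuting Fredholm operators. First I would start from the deformation $\hat D_\rho'=(D_0+t\varphi_{0,\rho_0}^4D_{K_0})+\varepsilon_{W_0}(D_1+t\varphi_{1,\rho_1}^4D_{K_1})$, which by Lemma~\ref{lem:sum=product} computes $[M](\rho)$. Write $A_0:=D_0+t\varphi_{0,\rho_0}^4D_{K_0}$ acting on $L^2(W_0)^{(\rho_0)}$ and $A_1:=D_1+t\varphi_{1,\rho_1}^4D_{K_1}$ acting on $L^2(W_1)^{(\rho_1)}$, so that $\hat D_\rho'=A_0\otimes 1+\varepsilon_{W_0}\otimes A_1$ on $L^2(W)^{(\rho)}=L^2(W_0)^{(\rho_0)}\,\widehat\otimes\,L^2(W_1)^{(\rho_1)}$. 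Both $A_0$ and $A_1$ are self-adjoint Fredholm with compact resolvent on their respective isotypic components (Corollary~\ref{cor:K-Fredholm}), so $A_1$ has a discrete spectrum and $E_{\rho_1}=\ker A_1$ is finite-dimensional.

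The key step is the spectral decomposition along the second factor. I would decompose $L^2(W_1)^{(\rho_1)}=\ker A_1\oplus(\ker A_1)^{\perp}$ and correspondingly split $L^2(W)^{(\rho)}$ into the part $L^2(W_0\otimes E_{\rho_1})^{(\rho_0)}$ — i.e. $L^2(W_0)^{(\rho_0)}\otimes\ker A_1$ — and its orthogonal complement $\calH':=L^2(W_0)^{(\rho_0)}\,\widehat\otimes\,(\ker A_1)^\perp$. Since $A_1$ is bounded below by a positive constant on $(\ker A_1)^\perp$ and $A_0$ commutes (up to the grading operator, which is unitary) with the second-factor operators, one shows $\hat D_\rho'$ is invertible on $\calH'$: indeed on $\calH'$ one has $(\hat D_\rho')^2=A_0^2\otimes 1+1\otimes A_1^2\geq 1\otimes A_1^2\geq c>0$ because the two terms anticommute through $\varepsilon_{W_0}$ and the cross term vanishes, $A_0(\varepsilon_{W_0}A_1)+(\varepsilon_{W_0}A_1)A_0=\varepsilon_{W_0}(A_0A_1-A_1A_0)\cdot(\pm)=0$ by $\varepsilon_{W_0}A_0=-A_0\varepsilon_{W_0}$ and $[A_0,A_1]=0$. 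Hence $\ker\hat D_\rho'$ is contained in $L^2(W_0\otimes E_{\rho_1})^{(\rho_0)}$, on which $\hat D_\rho'$ restricts to $D_{\rho_0,E_{\rho_1}}=A_0\otimes 1$, giving $[M](\rho)={\rm index}(D_{\rho_0,E_{\rho_1}})$. (One must check the $\bbZ/2$-gradings match: the grading on $W_0\otimes E_{\rho_1}$ is the tensor of that on $W_0$ and that on $E_{\rho_1}$, and $\hat D_\rho'$ restricted there is $A_0\otimes{\rm id}_{E_{\rho_1}}$, whose graded index is ${\rm index}(A_0)$ weighted by the super-dimension of $E_{\rho_1}$.)

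For the second formula I would simply observe that $D_{\rho_0,E_{\rho_1}}=A_0\otimes{\rm id}_{E_{\rho_1}}$ on $L^2(W_0)^{(\rho_0)}\otimes E_{\rho_1}$, and that the $\bbZ/2$-grading of the total space is the product grading, so $\ker^+$ and $\ker^-$ of the restriction are $(\ker^+A_0\otimes E_{\rho_1}^+)\oplus(\ker^-A_0\otimes E_{\rho_1}^-)$ and $(\ker^+A_0\otimes E_{\rho_1}^-)\oplus(\ker^-A_0\otimes E_{\rho_1}^+)$ respectively. Taking super-dimensions yields
\[
{\rm index}(D_{\rho_0,E_{\rho_1}})=\bigl(\dim\ker^+A_0-\dim\ker^-A_0\bigr)\bigl(\dim E_{\rho_1}^+-\dim E_{\rho_1}^-\bigr)=(E_{\rho_0}^+-E_{\rho_0}^-)(E_{\rho_1}^+-E_{\rho_1}^-),
\]
using ${\rm index}(A_0)=E_{\rho_0}^+-E_{\rho_0}^-$ in ${\rm K}({\rm pt})\cong\bbZ$.

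The main obstacle I anticipate is the analytic bookkeeping around the tensor-product/Hilbert-space completion: one must be careful that $\hat D_\rho'$ as an essentially self-adjoint operator on the completed tensor product really does decompose as an orthogonal direct sum $D_{\rho_0,E_{\rho_1}}\oplus(\hat D_\rho'|_{\calH'})$ with $\calH'$ an invariant subspace, that $\hat D_\rho'|_{\calH'}$ is genuinely invertible (not merely injective) so it contributes $0$ to the index, and that the finite-dimensionality of $E_{\rho_1}$ together with a spectral gap of $A_1$ at $0$ is available — all of which rest on Corollary~\ref{cor:K-Fredholm} (compact resolvent) applied to $(M_1,W_1)$. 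Since the paper has already set up the product/sum formula machinery of \cite[Section~3.3]{Fujita-Furuta-Yoshida2} (invoked via Lemma~\ref{lem:sum=product}), I expect this can be stated crisply by quoting that framework rather than re-deriving the tensor-product index theorem from scratch.
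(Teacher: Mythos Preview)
Your proposal is correct and is essentially the paper's own approach: the paper's proof consists of invoking Lemma~\ref{lem:sum=product} and then citing \cite[Assumption~3.14]{Fujita-Furuta-Yoshida2} for the product-formula machinery, which you have simply unpacked into the standard kernel-splitting argument for $A_0\otimes 1+\varepsilon_{W_0}\otimes A_1$ (exactly as you anticipated in your final paragraph). One minor point: the Fredholmness and spectral gap at $0$ for $A_1=D_1+t\varphi_{1,\rho_1}^4D_{K_1}$ come from the coercivity in Proposition~\ref{prop:coercive}/Theorem~\ref{thm:FFY=F} rather than from Corollary~\ref{cor:K-Fredholm} (which is stated for the $f_\rho^4$-deformation $\hat D_\rho$), but self-adjoint Fredholmness alone already gives the finite-dimensional kernel and invertibility on $(\ker A_1)^\perp$ that your argument uses, so compact resolvent is not actually needed.
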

\begin{proof}
This follows from Lemma~\ref{lem:sum=product} and the fact that 
the above construction satisfies \cite[Assumption~3.14]{Fujita-Furuta-Yoshida2}.  
\end{proof}

Hereafter we exhibit examples and useful formulas. 
These examples give local models in the computation in Section~\ref{sec:Riemann-Roch character of proper Hamiltonian torus space}. 

\begin{example}[Cylinder]\label{ex:cylinder}
Let $M_1$ be the cotangent bundle of the circle $T^*S^1\cong \bbR\times S^1$ equipped with 
the standard symplectic structure, almost complex structure and the natural $S^1$-action on the $S^1$-factor. 
Let $(r,\theta)$ be the coordinate on $M_1$.  
Fix $\rho\in{\rm Irr}(S^1)\cong \bbZ$ and put 
\[
L_\rho:=M_1\times \bbC_{(\rho)}, 
\]where $\bbC_{(\rho)}$ is the one dimensional Hermitian vector space with $S^1$-action of weight $\rho$.  
We take a connection $\nabla$ on $L_\rho$ defined by 
\[
\nabla=d-2\pi\sqrt{-1}\mu(r)dr, 
\] where $\mu:\R\to \R$ is a smooth non-decreasing $S^1$-invariant function such that 
\[
\mu(r)=
\begin{cases} 
r+\rho \quad \left(|r|<\frac{1}{4}\right) \\
\frac{1}{2}+\rho \quad \left(|r|>\frac{3}{4}\right). 
\end{cases}
\]
We take a Clifford module bundle $W_{1,\rho}$ as 
\[
W_{1,\rho}=\wedge^{\bullet}T_{\bbC}M_1\otimes L_\rho=(\bbC\oplus\bbC)\otimes L_\rho, 
\]with the Clifford action $c:T^*M_1\to {\rm End}(W_{1,\rho})$ given by 
\[
c(dr)=
\begin{pmatrix}
0 & -\sqrt{-1} \\ 
-\sqrt{-1} & 0 
\end{pmatrix}, \quad 
c(d\theta)=
\begin{pmatrix}
0 & {-1} \\ 
1 & 0 
\end{pmatrix}. 
\]
These structures give rise a Dolbeault-Dirac operator $D$ and an 
$S^1$-acyclic orbital Dirac-type operator $(D_{1,\rho}, \{V_{1,\rho, \tau}\}_{\tau})$ 
with 
\[
V_{1,\rho, \tau}=
\begin{cases}
M_1\setminus (\{0\}\times S^1) \quad (\tau=\rho) \\ 
M_1 \quad (\tau\neq\rho)
\end{cases}
\] 
and all the data satisfy the condition in Theorem~\ref{prop:sufficientcondD_K}. 
In particular we have the resulting index as an element in $R^{-\infty}(S^1)$. 
We denote it by $[M_{1,\rho}]$. 
By the direct computation one has the following. 
\begin{prop}\label{prop:cylinder}
$[M_{1,\rho}]$ is the delta function supported at $\rho\in{\rm Irr}(S^1)$. Namely we have 
\[
[M_{1,\rho}] : R(S^1)\to \bbZ, \quad \tau\mapsto \delta_{\rho\tau}. 
\]
\end{prop}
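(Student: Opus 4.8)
\noindent\emph{Proof strategy.} The plan is to evaluate $[M_{1,\rho}]$ on each $\tau\in{\rm Irr}(S^1)\cong\bbZ$ separately, by computing the Fredholm index of the deformed operator $D+t\varphi_\tau^4 D_{1,\rho}$ on $L^2(W_{1,\rho})^{(\tau)}$ for $t\gg 0$ (legitimate by Theorem~\ref{thm:FFY=F} and Definition~\ref{dfn:[M]}), and splitting into the cases $\tau\neq\rho$ and $\tau=\rho$; alternatively one may first invoke Theorem~\ref{thm:localizationformula} to reduce everything to a single local contribution.

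First I would treat $\tau\neq\rho$. Since $\mu$ is non-decreasing with $\mu(0)=\rho$ and takes values in $[\rho-\tfrac12,\rho+\tfrac12]$, its only integer value is $\rho$, attained exactly at $r=0$; hence $Z_\tau={\rm Zero}(\underline{\tau}-\underline{\mu})=\emptyset$, one may take $\varphi_\tau\equiv 1$, and $(D_{1,\rho}^{(\tau)})^2=|\tau-\mu|^2\geq\tfrac14$ uniformly on $M_1$. Combining this uniform lower bound with the estimate $|((DD_{1,\rho}+D_{1,\rho}D)s,s)|\leq C_\tau((D_{1,\rho})^2 s,s)$ from Proposition~\ref{prop:sufficientcondD_K}, the algebraic identity used in the proof of Proposition~\ref{prop:compactresolvent} gives $\|(D+tD_{1,\rho})s\|^2\geq\frac{t^2-tC_\tau}{4}\|s\|^2$ on $L^2(W_{1,\rho})^{(\tau)}$ for $t>C_\tau$, so this (self-adjoint) operator is invertible and its index vanishes. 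Thus $[M_{1,\rho}](\tau)=0$ for $\tau\neq\rho$.

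The heart of the matter is $\tau=\rho$. Expanding sections along the $S^1$-factor into Fourier modes identifies $L^2(W_{1,\rho})^{(\rho)}$ with $L^2(\bbR;\bbC^2)$, the $\theta$-independent sections $s=s(r)$; on these, $\nabla_{\partial_\theta}=0$, $\nabla_{\partial_r}=\partial_r-2\pi\sqrt{-1}\mu(r)$, and by Remark~\ref{rem:orbital+Braverman} $D_{1,\rho}^{(\rho)}=\sqrt{-1}(\rho-\mu(r))c(d\theta)$. Plugging in the matrices $c(dr)$, $c(d\theta)$ from Example~\ref{ex:cylinder}, the deformation $D+t\varphi_\rho^4 D_{1,\rho}$ becomes the off-diagonal first-order operator on $L^2(\bbR;\bbC^2)$ with entries $B_t^{\pm}=-\sqrt{-1}\,\partial_r-2\pi\mu(r)\pm\sqrt{-1}\,t\varphi_\rho^4(r)(\rho-\mu(r))$, which are mutually adjoint and Fredholm for $t\gg 0$ by the estimates of Section~\ref{sec:Set-up and main assumptions} (equivalently, $B_t^{\mp}B_t^{\pm}=-\partial_r^2+t^2\varphi_\rho^8(\rho-\mu)^2+R$ with $R$ bounded of first order, so the potential is $\geq t^2/4$ outside a compact set and $0$ is not in the essential spectrum). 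The linear ODE $B_t^{+}u=0$ has solution $u(r)=u(0)\exp\!\big(\int_0^r\big[2\pi\sqrt{-1}\,\mu(s)+t\varphi_\rho^4(s)(\rho-\mu(s))\big]ds\big)$; since $\rho-\mu(r)<0$ for $r\gg 0$ and $>0$ for $r\ll 0$ (because $\mu$ increases monotonically from $\rho-\tfrac12$ to $\rho+\tfrac12$) while $\varphi_\rho\equiv 1$ outside a compact set, $|u(r)|$ decays exponentially at both ends, so $\dim\ker B_t^{+}=1$; the same computation with the opposite sign shows the (unique up to scale) solution of $B_t^{-}u=0$ grows exponentially at $+\infty$, so $\ker B_t^{-}=0$. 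Hence ${\rm index}((D+t\varphi_\rho^4 D_{1,\rho})^{(\rho)})=1-0=1$, i.e.\ $[M_{1,\rho}](\rho)=1$. Together with the previous case this is precisely the delta function at $\rho$.

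I expect the main difficulty to be pure book-keeping of normalizations: tracking the factors of $2\pi$, the identification $\frakk^*=\frakk$, the sign of the fundamental vector field entering $D_{1,\rho}$, and which summand of $\wedge^{\bullet}T_{\bbC}M_1$ is the $+$-part, so that the single transverse zero of $\rho-\mu$ at $r=0$ contributes $+1$ rather than $-1$; this orientation point is really the content of the computation. A secondary point to handle carefully is that once $\varphi_\rho$ is allowed to vanish near $r=0$ the deformed operator is no longer uniformly coercive there, which is why one works with $D+t\varphi_\rho^4 D_{1,\rho}$ for $t$ large and appeals to the a~priori estimates and Fredholmness of Section~\ref{sec:Set-up and main assumptions} (Proposition~\ref{prop:compactresolvent}, Corollary~\ref{cor:K-Fredholm}) rather than to a bare first-order operator. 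One could alternatively quote the value of this archetypal Bohr--Sommerfeld local model from \cite{Fujita-Furuta-Yoshida1} or \cite{Fujitaorigami}, but the self-contained ODE computation above is shorter.
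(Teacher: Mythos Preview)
Your proposal is correct and is precisely the ``direct computation'' the paper invokes without detail: the paper's entire proof of Proposition~\ref{prop:cylinder} is the sentence ``By the direct computation one has the following,'' so your argument (vanishing for $\tau\neq\rho$ via $Z_\tau=\emptyset$, and the explicit one-dimensional ODE analysis on the $\rho$-isotypic component) is exactly the intended content. Your caveats about sign and normalization conventions are appropriate but do not affect correctness, and the alternative route via Theorem~\ref{thm:localizationformula} you mention would lead to the same local ODE in any case.
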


\end{example}

\begin{example}[Vector space]\label{ex:disc}
Consider $M_2=\bbC$ with the standard $S^1$-action. 
Let $B_{\delta}(0)$ be the open disc centered at the origin with radius $\delta>0$.  
Here we take an  $S^1$-invariant metric on $M_2$ so that it is standard 
on $B_{\frac{1}{4}}(0)$ and isometric on the outside of $B_{\frac{3}{4}}(0)$ to that on 
the subset $\left\{r\geq\frac{3}{4}\right\}\times S^1$ of  $M_1$. 
Put 
\[
L_\rho:=M_2\times \bbC_{(\rho)}. 
\]
We take a connection $\nabla$ on $L_\rho$ and a Clifford module bundle $W_{2,\rho}$ so that they are 
standard on  $B_{\frac{1}{4}}(0)$ and isomorphic to those on $\left\{r>\frac{3}{4}\right\}\times S^1\subset M_1$ 
in Example~\ref{ex:cylinder} under the identification between $M_2\setminus B_{\frac{3}{4}}(0)$. 
These structures give rise a Dirac operator $D$ and an $S^1$-acyclic orbital Dirac-type operator $(D_{2,\rho}, \{V_{2,\rho,\tau}\}_{\tau})$ 
with 
\[
V_{2, \rho,\tau}=\bbC\setminus\{0\}
\] 
and all the data satisfy the condition in Theorem~\ref{prop:sufficientcondD_K}. 
We denote the resulting index by $[M_{2,\rho}]$. 
By the direct computation one has the following. 
\begin{prop}\label{prop:cylinder2}
$[M_{2,\rho}]$ is the delta function supported at $\rho\in{\rm Irr}(S^1)$. Namely we have 
\[
[M_{2,\rho}] : R(S^1)\to \bbZ, \quad \tau\mapsto \delta_{\rho\tau}. 
\]
\end{prop}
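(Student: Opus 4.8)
The plan is to reduce the computation of $[M_{2,\rho}]$ to the already-established computation of $[M_{1,\rho}]$ from Proposition~\ref{prop:cylinder} by an excision argument, and then to handle the remaining ``missing piece'' near the origin by a direct local model computation. First I would invoke the excision formula (\ref{eq:excision}): by construction the data $(M_2, W_{2,\rho}, D, D_{2,\rho}, V_{2,\rho,\tau})$ agree with the data $(M_1, W_{1,\rho}, D, D_{1,\rho}, V_{1,\rho,\tau})$ on a neighborhood of the region $\{r > 3/4\}\times S^1$, which contains $M_1\setminus V_{1,\rho,\rho}$ and $M_2\setminus V_{2,\rho,\rho}$ respectively. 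Wait --- that is not quite right, since $M_2\setminus V_{2,\rho,\rho} = \{0\}$ sits near the origin, not near the cylindrical end. So excision will \emph{not} directly identify the two indices; instead one should compare each separately against the same noncompact completion and use excision only to localize each index to a neighborhood of the origin.

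The cleaner route, and the one I would actually carry out, is to compute $[M_{2,\rho}](\tau)$ directly for each $\tau\in{\rm Irr}(S^1)$. For $\tau\neq\rho$ we have $V_{2,\rho,\tau}=\bbC\setminus\{0\}$ but in fact the acyclicity condition holds on a set whose complement can be taken to be as small as we like around $0$; combined with the $\rho$-isotypic analysis, the localization formula (Theorem~\ref{thm:localizationformula}) or the excision formula shows $[M_{2,\rho}](\tau)$ equals a local contribution $[Z_\tau]$ concentrated near $0$, which vanishes whenever $\tau\neq\rho$ because $Z_\tau=\mathrm{Zero}(\underline\tau-\underline\mu)$ is empty for such $\tau$ (here $\mu(0)=\rho$ and $\mu$ stays within distance $\tfrac12$ of $\rho$, so $\underline\tau - \underline\mu$ is nowhere zero when $\tau\neq\rho$). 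For $\tau=\rho$, I would use Proposition~\ref{prop:productformula1}-style reasoning or, more simply, the explicit model: on the standard disc $B_{1/4}(0)$ with the standard prequantum data, the deformed Dolbeault-Dirac operator $D + t\varphi_\rho^4 D_{2,\rho}$ restricted to the $\rho$-isotypic component is the classical Witten deformation on $\bbC$ whose kernel (for large $t$) is one-dimensional, generated by the ground state of a harmonic-oscillator-type operator, lying in even degree. Hence ${\rm index}((D+t\varphi_\rho^4 D_{2,\rho})^{(\rho)})=1$.

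Concretely, to pin down the $\tau=\rho$ case I would compare $(M_2,W_{2,\rho})$ with $(M_1,W_{1,\rho})$ via excision after attaching the \emph{same} cylindrical end to a compact neighborhood of the origin in each: since $\mu$ was chosen to agree (namely to be the constant $\tfrac12+\rho$) on $\{r>3/4\}$ in both models, and all bundle/connection data are identified there, the two completions $\widetilde X_\rho$ coincide, so excision gives $[M_{2,\rho}](\rho)=[M_{1,\rho}](\rho)=1$ by Proposition~\ref{prop:cylinder}. Combining: $[M_{2,\rho}](\tau)=\delta_{\rho\tau}$, i.e.\ $[M_{2,\rho}]$ is the delta function at $\rho$.

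\textbf{Main obstacle.} The delicate point is the bookkeeping in the excision/localization step: one must check that the acyclic orbital Dirac data on $M_2$ and on $M_1$ genuinely agree on an \emph{open neighborhood} of the matching region (not merely on a closed set), that the cut-off functions $\varphi_\rho$ can be chosen compatibly, and that the noncompact completions used to \emph{define} $[M_{1,\rho}]$ and $[M_{2,\rho}]$ are literally the same complete manifold-with-data --- so that excision formula (\ref{eq:excision}) applies verbatim. The direct harmonic-oscillator computation for $\tau=\rho$ is routine (it is the $n=1$ case of the standard $\bbC^n$ model), so the only real work is verifying the geometric compatibility that licenses the reduction to Example~\ref{ex:cylinder}.
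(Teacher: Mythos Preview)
Your harmonic-oscillator remark for $\tau=\rho$ is exactly the ``direct computation'' the paper has in mind, and that part is fine. But the two reductions you build around it each have a genuine gap.

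\emph{First gap: $Z_\tau$ is never empty.} The origin $0\in M_2=\bbC$ is a fixed point of the $S^1$-action, so the generating vector field $\underline{\xi}^{M}$ vanishes there. Since $\underline{\tau}-\underline{\mu}=(\tau-\mu)\,\underline{\xi}^{M}$ (Remark~\ref{rem:orbital+Braverman}), it vanishes at $0$ for \emph{every} $\tau$, regardless of the value $\mu(0)=\rho$. Hence $Z_\tau\supseteq\{0\}$ for all $\tau$, and this is precisely why the paper sets $V_{2,\rho,\tau}=\bbC\setminus\{0\}$ independently of $\tau$. So your argument that $[M_{2,\rho}](\tau)=0$ for $\tau\neq\rho$ ``because $Z_\tau$ is empty'' does not go through; one still has a local contribution at the origin to account for. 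You can kill it either by the same explicit Witten-deformation computation (now the harmonic-oscillator ground state sits in the $\rho$-isotypic piece, not the $\tau$-piece), or by invoking Theorem~\ref{thm:vanish} after the twist $W_{2,\rho}\otimes\bbC_{(-\tau)}$, since then $Z_0=\{0\}=M_2^{S^1}$ and $S^1$ acts on $L_\rho\otimes\bbC_{(-\tau)}|_0$ with nonzero weight $\rho-\tau$.

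\emph{Second gap: excision does not identify $M_2$ with $M_1$.} The excision formula (\ref{eq:excision}) requires the two data sets to be isomorphic on neighborhoods of the respective \emph{bad sets} $M\setminus V_\rho$ and $M'\setminus V'_\rho$. Here these are $\{0\}\subset\bbC$ and $\{0\}\times S^1\subset\bbR\times S^1$; a neighborhood of the first is a disc, of the second an annulus, and these are not even diffeomorphic. Agreement on the cylindrical end $\{r>3/4\}$ is agreement on the \emph{acyclic} region, which is exactly the part excision throws away, not the part it keeps. The ``completions $\widetilde X_\rho$'' you invoke are different manifolds (one end versus two), so the final equality $[M_{2,\rho}](\rho)=[M_{1,\rho}](\rho)$ cannot be obtained this way. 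The direct computation you already sketched is the route to take; the comparison with Example~\ref{ex:cylinder} is a red herring.
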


\end{example}

 \begin{example}[Product of cylinders and discs]
 Let $l,m$ be non-negative integers and $M$ the product 
 of $l$ copies of the cylinder $M_1$ and $m$ copies of the disc $M_2$ in the previous examples; 
 \[
 M:=M_1\times\cdots M_1\times M_2\times\cdots \times M_2=(M_1)^l\times (M_2)^m. 
 \]There is the natural induced action of $K:=(S^1)^{l+m}$ on $M$. 
  We use the natural identifications 
 \[
 {\rm Irr}(K)=\left({\rm Irr}(S^1)\right)^{l+m}, 
 \]and 
 \[
 R(K)=R(S^1)^{\otimes (l+m)}. 
 \]
 Take $\rho=(\rho_1, \ldots, \rho_{l},\rho_1' ,\ldots, \rho_{k}')\in{\rm Irr}(K)$ and consider the corresponding structures 
 $(M_1, W_{1,\rho_i}, D_{1,\rho_i}, \{V_{1,\rho_i,\tau}\}_{\tau\in{\rm Irr}(S^1)})$  and 
 $(M_2, W_{2,\rho_j'}, D_{2,\rho_j'}, \{V_{2,\rho_j', \tau}\}_{\tau\in{\rm Irr}(S^1)})$. 
 Using the outer tensor product we can define the product of the Clifford module bundle 
 \[
 W_\rho:=W_{1,\rho_1}\boxtimes \cdots\boxtimes W_{1, \rho_l} \boxtimes  W_{2, \rho_1'} 
 \boxtimes \cdots \boxtimes W_{2, \rho_m'}
 \]which is a Clifford module bundle over $M$.  
 The products 
 \[
 D_K:=D_{1,\rho_1}\boxtimes \cdots\boxtimes D_{1, \rho_l} \boxtimes  D_{2, \rho_1'} 
 \boxtimes \cdots \boxtimes D_{2, \rho_m'}
 \]and 
 \[
 V_\tau:=V_{1,\rho_1, \tau_1}\times \cdots \times V_{1,\rho_l, \tau_l}\times V_{2,\rho_1',\tau_1'}\times 
 \cdots \times V_{2,\rho_m',\tau_m'}  \quad (\tau=(\tau_1,\ldots, \tau_l,\tau'_1,\ldots, \tau_m')\in {\rm Irr}(K))
 \] 
 induce a $K$-acyclic orbital Dirac-type operator on $(M,W)$,  
 where for operators $A:\calH_0\to \calH_0$ and $B:\calH_1\to \calH_1$ on $\bbZ/2$-graded Hilbert spaces their product 
 $A\boxtimes B :\calH_0\otimes\calH_1\to \calH_0\otimes\calH_1$ is defined by 
 \[
 A\boxtimes B:=A\otimes{\rm id}+ \varepsilon_0 \otimes B  
 \]with  the grading operator $\varepsilon_0$ of $\calH_0$.  
In fact the data $(D_K, \{V_\tau\}_\tau)$ satisfy the conditions in Theorem~\ref{prop:sufficientcondD_K}, 
in particular we have the resulting index $[M_\rho]\in R^{-\infty}(K)$. 
The product formula (Proposition~\ref{prop:productformula1}) implies the following equality. 
\begin{prop}\label{prop:product}
We have 
\[
[M_\rho]=[M_{1,\rho_1}]\otimes \cdots \otimes [M_{1,\rho_l}]\otimes [M_{2,\rho_1'}]\otimes \cdots \otimes[M_{2,\rho_m'}].  
\]Namely $[M_{\rho}]$ is the delta function supported at $\rho\in{\rm Irr}(K)$.  
\end{prop}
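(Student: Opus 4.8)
The plan is to reduce everything to the product formula already established as Proposition~\ref{prop:productformula1}, applied iteratively. Observe that the manifold $M = (M_1)^l \times (M_2)^m$ is a finite product of the building blocks, the bundle $W_\rho$ is the outer tensor product of the corresponding $W_{1,\rho_i}$ and $W_{2,\rho_j'}$, and likewise $D_K$ and the open sets $V_\tau$ are the product constructions. First I would verify that the data $(D_K, \{V_\tau\}_\tau)$ actually satisfy the conditions of Proposition~\ref{prop:sufficientcondD_K}: compactness of each critical set $Z_\tau$ follows because $Z_\tau$ for the product is contained in the product of the individual $Z_{\rho_i,\tau_i}$ (each of which is compact by the previous examples), the uniform bounds on $|\underline{\xi_i^M}|$, on $C_{\rho,x}$, and on the minimal eigenvalue of $(D_K|_{K\cdot x})^2$ all come from the last sentence of Proposition~\ref{prop:sufficientcondD_K}, which says products of data with cylindrical/periodic ends inherit these properties. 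This gives a well-defined index $[M_\rho]\in R^{-\infty}(K)$.

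Next I would apply Proposition~\ref{prop:productformula1}. Write $K = K_0 \times K_1$ with $K_0 = S^1$ the first circle factor and $K_1 = (S^1)^{l+m-1}$ the remaining ones, and correspondingly $M = M_0 \times M_1'$ with $M_0$ the first cylinder $M_{1,\rho_1}$ and $M_1'$ the product of the rest. For $\tau = (\tau_1, \tau'') \in \mathrm{Irr}(K_0) \times \mathrm{Irr}(K_1)$, Proposition~\ref{prop:productformula1} gives
\[
[M_\rho](\tau) = (E_{\tau_1}^+ - E_{\tau_1}^-)\otimes(E_{\tau''}^+ - E_{\tau''}^-),
\]
where the first factor is $[M_{1,\rho_1}](\tau_1)$ and the second is $[M_1'](\tau'')$ as elements of $\mathrm{K}(\mathrm{pt})\cong\bbZ$. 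By Proposition~\ref{prop:cylinder} the first factor equals $\delta_{\rho_1 \tau_1}$. Then I would induct on the number of factors: the inductive hypothesis applied to $M_1'$ (a product of $l-1$ cylinders and $m$ discs) gives $[M_1'](\tau'') = \prod_{i=2}^l \delta_{\rho_i \tau_i}\cdot\prod_{j=1}^m \delta_{\rho_j' \tau_j'}$, using Proposition~\ref{prop:cylinder} and Proposition~\ref{prop:cylinder2} for the base cases of a single cylinder or disc. Multiplying gives $[M_\rho](\tau) = \prod \delta$, i.e.\ $[M_\rho] = [M_{1,\rho_1}]\otimes\cdots\otimes[M_{2,\rho_m'}]$, which is the delta function supported at $\rho$.

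The main obstacle is purely bookkeeping rather than conceptual: I need to make sure the product construction for $D_K$ and $\{V_\tau\}$ used in this Proposition matches exactly the hypotheses (the outer-tensor-product Clifford module, the $\varepsilon_0$-twisted sum of Dirac operators) under which Proposition~\ref{prop:productformula1} and Lemma~\ref{lem:sum=product} were proven, and to apply them compatibly when peeling off one factor at a time. In particular I must check that the cut-off function for the product, $\varphi_\tau = \prod \varphi_{i,\rho_i,\tau_i}\cdot\prod\varphi_{2,\rho_j',\tau_j'}$, is the natural one appearing in the proof of Lemma~\ref{lem:sum=product}, so that the coercivity estimate there applies and the sum deformation computes the same index as the product deformation. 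Once that identification is in place, the computation is just iterated application of the two-factor formula together with the single-factor computations in Example~\ref{ex:cylinder} and Example~\ref{ex:disc}.
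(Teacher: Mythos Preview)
Your proposal is correct and follows exactly the route the paper takes: the paper simply states that the equality is an immediate consequence of the product formula (Proposition~\ref{prop:productformula1}), together with the single-factor computations of Propositions~\ref{prop:cylinder} and~\ref{prop:cylinder2}. Your iterative peeling-off-one-factor argument and the verification that the product data satisfy Proposition~\ref{prop:sufficientcondD_K} merely spell out what the paper leaves implicit.
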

This structure serves as a local model of a neighborhood of the fiber of the moment map of 
symplectic toric manifold in Section~\ref{sec:A Danilov type formula for proper toric manifold}. 

\end{example}

\subsection{Fiber bundle over a closed manifold}
\label{subsec:Fiber bundle over a closed manifold}
Let $X$ be a closed Riemannian manifold, $E\to X$ a $\bbZ/2$-graded Clifford module bundle over $X$ 
and $P\to X$ a principal $G$-bundle for a compact Lie group $G$.  
Consider a $K$-acyclic orbital Dirac-type operator $(D_K, \{V_{\rho}\}_{\rho\in{\rm Irr}(K)})$ on $(M,W)$ as in Theorem~\ref{prop:sufficientcondD_K}. 
Suppose that $G\times K$ acts on $W\to M$ in an isometric way and 
$(D_K, \{V_{\rho}\}_{\rho\in{\rm Irr}(K)})$ is $G$-invariant.  
Consider the diagonal action of $G$ on $P\times M$ and the quotient manifold 
\[
\wtilde M:=(P\times M)/G, 
\]which has a structure of $M$-bundle $\pi:\wtilde M\to X$. 
Let $\wtilde W\to \wtilde M$ be the vector bundle defined by 
\[
\wtilde W:=\pi^*E\otimes\left((P\times W)/G\right), 
\] which has a structure of a Clifford module bundle over $\wtilde M$ by using an appropriate connection of $P$. 
One can define operators $\wtilde D_W$ and $\wtilde D_E$ on $\wtilde W$ as lifts 
(by using a trivialization of $P$ and a partition of unity if necessary) of Dirac operators $D_W$ on $W$ and $D_E$ on $E$. 
Then 
\[
\wtilde D:=\wtilde D_E+\wtilde D_W
\]is a Dirac operator on $\wtilde W$. 

For $\rho\in{\rm Irr}(K)$ let $\wtilde V_{\rho}$ be the open subset defined by 
\[
\wtilde V_{\rho}:=(P\times V_{\rho})/G 
\]whose complement in $\wtilde M$ is compact.  
$D_K$ induces an operator $\wtilde D_K$ on $\wtilde W$. 
One can see that $(\wtilde D_K, \{\wtilde V_{\rho}\}_{\rho\in {\rm Irr}(K)})$ is a $K$-acyclic orbital Dirac-type operator on 
$(\wtilde M, \wtilde W)$.  
In particular we have a Fredholm operator 
\[
\wtilde D_{\rho}=\wtilde D+t{\wtilde\varphi_\rho}^4 \wtilde D_K
\]on $L^2(\wtilde W)^{(\rho)}$, where $\wtilde\varphi_\rho:\wtilde M\to [0,1]$ is the 
cut-off function induced from the cut off function $\varphi_\rho$ on $M$ as in (\ref{eq:cut-off fct}). 
In this way we have an element $[\wtilde M]\in R^{-\infty}(K)$ defined by 
\[
[\wtilde M](\rho):={\rm index}(\wtilde D_\rho). 
\]

Now consider the Fredholm operator $D_W+t\varphi_{\rho}^4D_{K}$ on $L^2(W)^{(\rho)}$ 
and we put 
\[
E_{\rho}:=\ker(D_W+t\varphi_{\rho}^4D_{K})=E^+_{\rho}\oplus E^-_{\rho}
\]as the $\bbZ/2$-graded finite dimensional vector space.  
Then there is a natural embedding 
\[
L^2(E\otimes E_{\rho})\to L^2(\wtilde W)^{(\rho)} 
\]whose image is preserved by $\wtilde D_E$. 
Let $D_{E, {\rho}}$ be the restriction of $\wtilde D_E$ on this image, 
which gives a Fredholm operator on $L^2(E\otimes E_{\rho})$ 
because the symbol of $D_{E, {\rho}}$ is equal to the tensor product of ${\rm id}_{E_{\rho}}$ and the symbol of $D_E$, 
in particular it is an elliptic operator on the closed manifold $X$. 

\begin{prop}\label{prop:productformula2}
For each $\rho\in{\rm Irr}(K)$ we have 
\[
[\wtilde M](\rho)={\rm index}(D_{E, \rho}). 
\]If we write ${\rm index}(D_E)=E_{0}^+-E_{0}^-$ as an element in 
the K-group K(pt)$\cong\bbZ$, then we have  
\[
[\wtilde M](\rho)=(E_{0}^+-E_{0}^-)\otimes (E_{\rho}^+-E_{\rho}^-). 
\]
\end{prop}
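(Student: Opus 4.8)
The plan is to reduce the index computation on the total space $\wtilde M$ to an elliptic problem on the closed base $X$, exactly parallel to the direct-product case in Proposition~\ref{prop:productformula1}. First I would invoke the coercivity/excision machinery: since $(\wtilde D_K,\{\wtilde V_\rho\})$ is a $K$-acyclic orbital Dirac operator on $(\wtilde M,\wtilde W)$ (already established just above the statement), the deformed operator $\wtilde D_\rho=\wtilde D+t\wtilde\varphi_\rho^4\wtilde D_K$ is $K$-Fredholm on $L^2(\wtilde W)^{(\rho)}$ by Corollary~\ref{cor:K-Fredholm}, and its index is independent of the large parameter $t$ and stable under the kinds of homotopy used in the proof of Proposition~\ref{prop:coercive}. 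The key point is that $\wtilde D=\wtilde D_E+\wtilde D_W$ splits into a ``horizontal'' piece $\wtilde D_E$ lifting the Dirac operator on $E\to X$ and a ``vertical'' piece $\wtilde D_W$ lifting $D_W$ on $W\to M$, and $\wtilde D_K$ is purely vertical (differentials along $K$-orbits in the fiber direction), so $\wtilde D_\rho$ is, up to lower-order fiberwise-bundle terms, a sum of a horizontal operator and the fiberwise deformation $\wtilde D_W+t\wtilde\varphi_\rho^4\wtilde D_K$.

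Next I would run the adiabatic/rescaling argument of \cite[Section~3.3]{Fujita-Furuta-Yoshida2}: rescale the horizontal part (equivalently replace $\wtilde D_E$ by $\lambda\wtilde D_E$ and let $\lambda\to 0$, or equivalently let $t\to\infty$ in the fiber direction first) so that the $L^2$-sections of $\wtilde W$ concentrate onto the fiberwise kernel bundle of $\wtilde D_W+t\wtilde\varphi_\rho^4\wtilde D_K$. By the fiberwise version of Corollary~\ref{cor:K-Fredholm}, this kernel, on the $\rho$-isotypic component, is a $\bbZ/2$-graded finite-dimensional vector space, namely the fiber $E_\rho=\ker(D_W+t\varphi_\rho^4 D_K)^{(\rho)}$; because all the data are $G$-invariant, these fiberwise kernels assemble into a $G$-equivariant $\bbZ/2$-graded vector bundle, hence a genuine $\bbZ/2$-graded vector bundle $E\otimes\bigl((P\times E_\rho)/G\bigr)$ over $X$ after taking the associated bundle. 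On this kernel bundle $\wtilde D_\rho$ restricts to exactly $D_{E,\rho}$, the operator obtained by twisting $D_E$ by $E_\rho$, whose symbol is ${\rm id}_{E_\rho}\otimes\sigma(D_E)$ and which is therefore elliptic on the closed manifold $X$. The equality $[\wtilde M](\rho)={\rm index}(D_{E,\rho})$ then follows because the construction fits the hypotheses of \cite[Assumption~3.14]{Fujita-Furuta-Yoshida2}, just as in the proof of Proposition~\ref{prop:productformula1}. The second displayed formula is then immediate from the multiplicativity of the index under twisting: ${\rm index}(D_{E,\rho})={\rm index}(D_E)\otimes(E_\rho^+-E_\rho^-)=(E_0^+-E_0^-)\otimes(E_\rho^+-E_\rho^-)$ in ${\rm K}({\rm pt})\cong\bbZ$.

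The main obstacle I anticipate is verifying cleanly that $\wtilde D_\rho$ genuinely fits the product-formula framework of \cite[Section~3.3]{Fujita-Furuta-Yoshida2} --- in particular that the commutator/curvature terms coming from the connection on $P$ and from the trivialization-and-partition-of-unity construction of $\wtilde D_E$ and $\wtilde D_W$ are of lower order relative to the deformation and do not destroy coercivity outside the preimage of the compact set $X\setminus(\text{anything})$; equivalently, that $\wtilde D_\rho$ is coercive on $\wtilde V_\rho$ with a uniform constant. This is handled exactly as in Proposition~\ref{prop:coercive}: on $\wtilde V_\rho$ one expands $(\wtilde D_\rho)^2$, uses that $\wtilde D_K^{(\rho)}$ is bounded of order zero so that $[\wtilde D,\wtilde\varphi_\rho^4\wtilde D_K]$ and the horizontal-vertical cross terms are controlled by $t\kappa_\rho\|s\|^2$ for $t\gg 0$, and uses compactness of $X$ to make all estimates uniform; the remaining bookkeeping is routine and I would not grind through it here. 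Once coercivity and the adiabatic concentration are in place, the identification with $D_{E,\rho}$ and hence the stated formula are formal.
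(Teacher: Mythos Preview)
Your proposal is correct and takes essentially the same approach as the paper: the paper's proof consists of the single sentence that the construction satisfies \cite[Assumption~3.14]{Fujita-Furuta-Yoshida2}, and your argument is precisely an expanded account of why that assumption holds in this fiber-bundle setting. The additional discussion you give of coercivity, the adiabatic limit, and the assembly of the fiberwise kernels into a bundle over $X$ is exactly the content behind that citation, so there is no substantive difference in strategy.
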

\begin{proof}
This follows from the fact that the above construction satisfies \cite[Assumption~3.14]{Fujita-Furuta-Yoshida2}.  
\end{proof}

\begin{example}\label{ex:localmodel2}
Let $K$ be a torus. 
Consider $M=T^*K$ with the $K$-acyclic orbital Dirac-type operator $(D_K, \{V_{\rho}\}_{\rho\in{\rm Irr}(K)})$ 
defined as the product of Example~\ref{ex:cylinder}. 
Suppose that we take a Clifford module bundle by using $\bbC_{(\rho)}$ for a fixed $\rho\in{\rm Irr}(K)$. 
Then we have  
\[
[M] : R(K)\to \bbZ, \quad \rho'\mapsto \delta_{\rho\rho'}. 
\]
Let $X$ be a closed Riemannian manifold, $E\to X$ a Clifford module bundle and $P\to X$ a principal $K$-bundle. 
Let $\wtilde M$ be the $M$-bundle over $X$ defined by 
\[
\wtilde M=(P\times M)/K. 
\]
Proposition~\ref{prop:productformula2} ensures us that 
\[
[\wtilde M] : R(K)\to \bbZ, \quad \rho'\mapsto {\rm index}(E)\delta_{\rho\rho'}, 
\]where ${\rm index}(E)$ is the index of a Dirac operator on $E$. 
This example serves as a local model of a neighborhood of the 
inverse image of the moment map of Hamiltonian torus action in Section~\ref{subsec:[Q,R]=0 for proper Hamiltonian torus space}. 
\end{example}

\section{Vanishing theorem for fixed points}
\label{sec:A vanishing theorem}
In this section we show the following vanishing theorem for our index, which is a modification of 
\cite[Theorem~6.1]{Fujita-Furuta-Yoshida3} and plays an important role in the subsequent section. 
Though we only use the circle action case in this paper,  we give a slight general version below.

For a torus $K$ we consider a $K$-acyclic orbital Dirac-type operator 
on a Hermitian manifold $M$ with a $K$-equivariant line bundle $L\to M$ as in the end of Section~\ref{subsec:Construction of D_K}. 
We fix and use the Clifford module bundle $W_{\rho}=\wedge^{\bullet} T_{\bbC}M\otimes L\otimes\bbC_{(\rho)}$, 
where $\bbC_{(\rho)}$ is the 1-dimensional irreducible representation of $K$ with weight $\rho$. 
We put the following assumptions. 
\begin{assump}\label{assump:vanish}
Together with the conditions in Theorem~\ref{prop:sufficientcondD_K} we assume the followings.  
\begin{itemize}
\item A compact Lie group $H$ acts on $M$, which commutes with $K$-action and 
all the additional data are $H\times K$-equivariant. 
\item $Z_\rho$ is equal to the fixed point set $M^K$, and it is a closed connected submanifold of $M$. 
\item The fixed point set $L^K$ is equal to the image of $M^K$ in $L|_{M^K}$  by the zero section.  
\end{itemize}
\end{assump}

\begin{thm}\label{thm:vanish}
Under Assumption~\ref{assump:vanish} we have 
\[
[Z_{\rho}]={\rm index}_H(\hat D_{\rho})=0\in R(H). 
\]
\end{thm}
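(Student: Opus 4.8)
The plan is to exploit the $K$-equivariant structure together with the product/localization machinery of Section~\ref{sec:Product fomula}, reducing the vanishing statement to a local computation near $M^K$ and then to the fact that the circle action on a normal slice has no fixed nonzero weight. Concretely, under Assumption~\ref{assump:vanish} we have $Z_\rho = M^K$, and by the excision formula \eqref{eq:excision} the integer $[Z_\rho]$ depends only on a $K$-invariant tubular neighborhood of $M^K$ in $M$. So the first step is to choose such a neighborhood and identify it, $K$-equivariantly, with (an open subset of) the total space of the normal bundle $N \to M^K$, which carries a fibrewise linear $K$-action with no nonzero fixed vectors (since the fixed point set of the $K$-action is exactly the zero section). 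Pulling back the line bundle $L$ and the Clifford module $W_\rho = \wedge^\bullet T_\bbC M \otimes L \otimes \bbC_{(\rho)}$, one sees that near $M^K$ the data have the shape of Example~\ref{ex:localmodel2}: a fibre bundle over the closed manifold $X := M^K$ with fibre a product of linear $S^1$-models (the cylinder/disc models of Example~\ref{ex:cylinder} and Example~\ref{ex:disc}), twisted by the data on $M^K$.

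The second step is to apply the product formula Proposition~\ref{prop:productformula2} in its $H$-equivariant form. Writing the normal directions as a sum of $K$-weight spaces (all with nonzero weights, after decomposing the circle or torus action), the fibrewise index is the product of the local model indices $[M_{2,\alpha}]$ or $[M_{1,\alpha}]$, each of which is the delta function at its weight $\alpha$ by Proposition~\ref{prop:cylinder} and Proposition~\ref{prop:cylinder2}. The key point is to choose the connection on $L$ so that the moment map restricted to the normal slice is, near the zero section, the genuine moment map of the linear $K$-action on $N$, whose zero set is exactly $M^K$; and by the third bullet of Assumption~\ref{assump:vanish} the line bundle $L$ is trivial as a $K$-module along $M^K$, i.e.\ carries $K$-weight $0$ on $M^K$. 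Hence the total $K$-weight contributing to the isotypic component $\rho$ near $M^K$ is a sum $\sum_j \alpha_j$ of the nonzero normal weights plus the weight of $\bbC_{(\rho)}$, and the relevant fibrewise index is nonzero only in the isotypic component $\rho' = \sum_j \alpha_j \cdot m_j + \rho$ for the multiplicities $m_j$; the contribution to $[Z_\rho](\rho) = [Z_\rho]$ itself (the $\rho$-isotypic part) therefore requires $\sum_j \alpha_j m_j = 0$.

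The third step is the arithmetic of weights, which is where the genuine mechanism of the vanishing lives and which I expect to be the main obstacle to phrase cleanly. One needs: on $M^K$ the normal bundle $N$ decomposes under $K$ into complex line subbundles of nonzero weights, and because $W_\rho$ is the full exterior algebra $\wedge^\bullet T_\bbC M$, the fibrewise kernel $E_\rho$ of the deformed orbital Dirac operator along a slice is a single one-dimensional space whose $K$-weight is determined by summing the negative (or positive) normal weights according to the sign conventions of the almost complex structure — this is precisely the local Dolbeault computation underlying Example~\ref{ex:disc}/Proposition~\ref{prop:cylinder2}. The resulting $K$-weight of $E_\rho \otimes \bbC_{(\rho)}$ along $M^K$ is $\rho$ shifted by a \emph{nonzero} combination of normal weights, so it is never equal to $\rho$; hence by Proposition~\ref{prop:productformula2} the $\rho$-isotypic index vanishes, and the remaining $H$-equivariant factor $\mathrm{index}_H(E)$ (the index of a Dirac operator on $X = M^K$ twisted by the $H$-bundle) multiplies $0$. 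Therefore $[Z_\rho] = \mathrm{index}_H(\hat D_\rho) = 0$ in $R(H)$.

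The only genuinely delicate point is keeping the sign/weight bookkeeping consistent between the abstract orbital Dirac operator $D_K^{(\rho)} = \sqrt{-1}\,c(\underline\rho - \underline\mu)$ (Remark~\ref{rem:orbital+Braverman}) and the concrete normal-form models; I would handle this by reducing, via Proposition~\ref{prop:sufficientcondD_K} and the product structure, to the case where each normal weight space is exactly one of the standard $S^1$-models of Example~\ref{ex:cylinder} or Example~\ref{ex:disc}, so that the weight of the fibrewise kernel is read off directly from Proposition~\ref{prop:cylinder}/Proposition~\ref{prop:cylinder2}, and then invoking Proposition~\ref{prop:productformula2} with $X = M^K$ and $E$ the induced twisted Clifford module on it. The completeness issue (working near $M^K$ rather than on all of $M$) is dispatched by the cylindrical-end construction of Subsection~\ref{subsec:Non-complete case and localization formula} together with excision.
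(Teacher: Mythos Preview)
There is a genuine gap: you have misread the third bullet of Assumption~\ref{assump:vanish}. The statement ``$L^K$ equals the image of $M^K$ by the zero section'' means that the only $K$-fixed vectors in $L|_{M^K}$ are the zero vectors, i.e.\ $K$ acts with \emph{nonzero} weight on every fibre $L_x$ for $x\in M^K$. You read it as saying $L$ has $K$-weight~$0$ on $M^K$, which is exactly the opposite. This is not cosmetic: the nonzero weight of $L|_{M^K}$ is precisely the mechanism behind the vanishing, and your proposed mechanism (the normal weights shifting the kernel weight away from $\rho$) is incorrect. To see this, look at your own reference: Proposition~\ref{prop:cylinder2} gives $[M_{2,\rho}](\tau)=\delta_{\rho\tau}$, so the fibrewise index sits exactly at the weight $\rho$ of the line bundle at the fixed point, \emph{not} at $\rho$ shifted by the normal weight~$1$. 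Thus your argument, as written, would prove the vanishing even when $L$ has trivial $K$-weight on $M^K$, which is false.

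With the hypothesis read correctly, the heuristic becomes: at $M^K$ the line bundle $L\otimes\bbC_{(\rho)}$ has weight $\alpha_0+\rho$ with $\alpha_0\neq 0$, so the local-model index is concentrated at $\alpha_0+\rho\neq\rho$ and the $\rho$-isotypic contribution vanishes. The paper implements this somewhat differently from your outline even after this correction. For the linear local model (Proposition~\ref{prop:vanishvect}) it does not invoke the product of disc models directly; instead it uses the rank-reducing Lemma~\ref{lem:rankreduce} to replace the torus $K$ by a generic circle $K_1\subset K$ chosen so that $M^{K_1}=\{0\}$ and $K_1$ acts nontrivially on $L|_0$ (possible exactly because of the correctly-read third bullet), and then invokes \cite[Proposition~6.8]{Fujita-Furuta-Yoshida3} for the circle case. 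The passage from the vector-space model to the general $M^K$ is via the product formula Proposition~\ref{prop:productformula2}, as in your first two steps.
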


To show it we show a reducing rank lemma. 
Suppose that there exists a subtorus $K'$ of $K$ and $\rho'\in {\rm Irr}(K')$ such that the following conditions are satisfied. 
\begin{itemize}
\item  The restriction of $\rho$ to $K'$-action is $\rho'$, i.e., $\iota_{K'}^*(\rho)=\rho'$. 
\item $Z_{\rho'}={\rm Zero}(\underline{\rho'}-\underline{\mu'})$ is compact for $\mu':=\iota_{K'}^*\circ\mu$.
\item The differential operator 
\[
D_{K'}=\sum_{i=1}^{{\dim}K'}c(\underline{\xi_i^M})(\calL_{\xi_i}-\sqrt{-1}\mu_i)
\]and an open subset $V_{\rho'}:=M\setminus Z_{\rho'}$ give a $\rho'$-acyclic orbital Dirac-type operator on $(M,W_\rho)$.  
\end{itemize}
The deformation $\hat D_{\rho'}=D+t\varphi_{\rho'}^4D_{K'}$ gives a Fredholm operator on 
the isotypic component $L^2(W_\rho)^{(\rho')}$ for $t\gg 0$, 
where $\varphi_{\rho'}$ is a cut-off function for  $V_{\rho'}$ as in (\ref{eq:cut-off fct}). 
On the other hand the condition $\iota_{K'}^*(\rho)=\rho'$ implies that 
$L^2(W_\rho)^{(\rho)}$ is a subspace of $L^2(W\rho)^{(\rho')}$ and $(\hat D_{\rho'})^{(\rho')}$ preserves it. 
We define ${\rm index}(\hat D_{\rho',\rho})$ as its Fredholm index; 
\[
{\rm index}(\hat D_{\rho',\rho}):=
{\rm index}((\hat D_{\rho'})^{(\rho')}:L^2(W_\rho)^{(\rho)}\to L^2(W_\rho)^{(\rho)}). 
\] 
We can incorporate $H$-action and regard them as $H$-equivariant indices ${\rm index}_{H}(\cdot)$. 
\begin{lem}\label{lem:rankreduce}
$[Z_{\rho}]={\rm index}_H(\hat D_{\rho})={\rm index}_H(\hat D_{\rho',\rho})\in R(H)$. 
\end{lem}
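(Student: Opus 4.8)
The statement of Lemma~\ref{lem:rankreduce} asserts that restricting to the smaller torus $K'$ does not change the index, after we further restrict attention to the $\rho$-isotypic component. I would prove this in two stages: first establish the equality $[Z_\rho] = {\rm index}_H(\hat D_\rho)$, which is essentially a matter of unwinding notation, and then establish the genuinely substantive equality ${\rm index}_H(\hat D_\rho) = {\rm index}_H(\hat D_{\rho',\rho})$ by constructing an explicit homotopy of Fredholm operators on $L^2(W_\rho)^{(\rho)}$ connecting the full deformation $\hat D_\rho = D + t\varphi_\rho^4 D_K$ with the partial deformation $\hat D_{\rho'}|_{L^2(W_\rho)^{(\rho)}} = D + t\varphi_{\rho'}^4 D_{K'}$.

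\textbf{Stage one.} The identity $[Z_\rho] = {\rm index}_H(\hat D_\rho)$ is forced by the definitions: $[Z_\rho]$ is, per the discussion around \eqref{eq:fixedpoint}, the index of the deformation on a cylindrical-end completion of a $K$-invariant neighborhood of $Z_\rho$, and the excision formula \eqref{eq:excision} identifies this with ${\rm index}(D + \varphi_\rho^4 D_K)$ computed on $M$ itself. Under Assumption~\ref{assump:vanish} we have $Z_\rho = M^K$, so $V_\rho = M \setminus M^K$ and the whole construction carries the commuting $H$-action, giving the $H$-equivariant refinement. So this stage is a short paragraph of bookkeeping.

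\textbf{Stage two.} Here is the crux. The hypothesis $\iota_{K'}^*(\rho) = \rho'$ means that on the subspace $L^2(W_\rho)^{(\rho)} \subset L^2(W_\rho)^{(\rho')}$, the operators $D_K$ and $D_{K'}$ both act, via Remark~\ref{rem:orbital+Braverman}, as Clifford multiplications: $D_K^{(\rho)} = \sqrt{-1}\,c(\underline\rho - \underline\mu)$ using the full basis $\{\xi_1,\dots,\xi_n\}$ of $\frakk$, while $D_{K'}$ restricted to the same subspace is $\sqrt{-1}\,c(\underline{\rho'} - \underline{\mu'})$ using only the basis of $\frakk' = \mathrm{Lie}(K')$ — and since $\rho$ restricts to $\rho'$ and $\mu'=\iota_{K'}^*\circ\mu$, this is exactly the $\frakk'$-component of the vector field $\underline\rho - \underline\mu$. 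So on $L^2(W_\rho)^{(\rho)}$ the two deformation terms are Clifford multiplications by two vector fields, one being a sub-sum of the other. I would then consider the linear interpolation between the multiplier vector fields, or more robustly interpolate between the cut-off functions and the sets of basis vectors simultaneously, producing a family $\mathbb{D}_s = D + t\varphi_s^4\, D_{K,s}$, $s\in[0,1]$, of deformations, all $K'$-equivariant and $H$-equivariant. To conclude that the Fredholm index is constant along the family I need a coercivity estimate of the type in Proposition~\ref{prop:coercive}: there should be an $H\times K'$-invariant open set $U$ with compact complement on which $\|\mathbb{D}_s u\| \geq \varepsilon\|u\|$ uniformly in $s$, for sections supported in $U$. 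This follows if the interpolating vector fields have no common zero outside a fixed compact set, i.e. if $\mathrm{Zero}(\underline{\rho}-\underline\mu) \cup \mathrm{Zero}(\underline{\rho'}-\underline{\mu'}) = Z_\rho \cup Z_{\rho'}$ is compact — which is given, since $Z_\rho = M^K$ is compact by Assumption~\ref{assump:vanish} and $Z_{\rho'}$ is compact by hypothesis. Combined with the finite-propagation-speed argument invoked after Theorem~\ref{thm:FFY=F}, the index is invariant along the homotopy, and at $s=1$ it is by definition ${\rm index}_H(\hat D_{\rho',\rho})$.

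\textbf{The main obstacle.} The technical heart is making the uniform coercivity estimate work along the whole family — in particular checking that the interpolating data still satisfy the hypotheses of Proposition~\ref{prop:sufficientcondD_K} (conditions 2, 3, 4 on uniform bounds for $|\underline{\xi_i^M}|$, the constants $C_{\rho,x}$, and the bottom of the spectrum of $(D_{K}|_{K\cdot x})^2$), uniformly in the homotopy parameter $s$. The cylindrical-end assumption makes this manageable — the translational invariance reduces the uniformity to a statement on a compact slice — but one must be careful that the interpolation respects the end structure (e.g. interpolating only the cut-off functions and basis vectors, not the metric or connection, so the cylindrical model is untouched), and that the zero set of the interpolating vector field does not escape to infinity. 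I would handle this exactly as in the proof of Proposition~\ref{prop:coercive}: square $\mathbb{D}_s$, absorb the cross terms $c(d(\text{multiplier}))$ and $(DD_{K,s}+D_{K,s}D)$ into the positive quartic term $f_\rho^8 |{\text{multiplier}}|^2$ using the uniform bounds, and extract the lower bound on the region where the multiplier is bounded below.
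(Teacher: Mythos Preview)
Your overall architecture matches the paper's: both connect $\hat D_\rho$ to $\hat D_{\rho',\rho}$ through a continuous family of deformations and appeal to coercivity (as in Proposition~\ref{prop:coercive}) to conclude invariance of the Fredholm index, with excision handling the change of cut-off. Stage one is fine.

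The gap is in Stage two. Your single-parameter linear interpolation $\bbD_s$ need not remain coercive, and the sentence ``This follows if the interpolating vector fields have no common zero outside a fixed compact set, i.e.\ if $Z_\rho\cup Z_{\rho'}$ is compact'' is not justified. On the $\rho$-isotypic component the operator $D_{K'}+(1-s)D_{K,K'}$ is Clifford multiplication by the vector field $v_s$ generated by the Lie-algebra element $(\rho'-\mu')+(1-s)\eta$, where $\eta$ is the $(\frakk')^\perp$-component of $\rho-\mu$; the zero locus of $v_s$ is in general \emph{not} contained in $Z_\rho\cup Z_{\rho'}$. Concretely, take $K=T^2$, $K'=S^1\times\{1\}$, $\rho=0$, and a point $x$ with diagonal-circle stabilizer and $\mu(x)=(1,2)$; then $x\notin Z_\rho\cup Z_{\rho'}$, yet at $s=\tfrac12$ the element $(-1,-1)$ lies in $\frakk_x$, so $v_{1/2}(x)=0$. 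Since strata with proper nontrivial stabilizer need not be compact, such zeros can occur on the cylindrical end and break coercivity along your family.

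The paper sidesteps this with two independent parameters rather than one. Writing $D_K=D_{K'}+D_{K,K'}$ and working with the common cut-off $\varphi_{\rho,\rho'}=\varphi_\rho\varphi_{\rho'}$, it first increases the coefficient of $D_{K'}$ from $t$ to a large $t'\ge t$ (via the argument of Theorem~\ref{thm:B=F}), and only then sends the coefficient of $D_{K,K'}$ from $t$ to $0$ via $\epsilon\in[0,1]$. The point is that on the interior of $\varphi_{\rho,\rho'}^{-1}(1)\subset V_{\rho'}$ the operator $(D_{K'})^2$ is itself bounded below by $\kappa_{\rho'}>0$, so once $t'$ dominates, the bounded perturbation $\epsilon t\,D_{K,K'}$ cannot spoil coercivity regardless of where the combined vector field might vanish. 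Your proposal is missing this ``make $D_{K'}$ dominant first'' step.
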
  
\begin{proof}
By taking a basis of $\frakk$ which is an extension of a basis of $\frakk'$ we may assume that 
\[
D_K=\sum_{i=1}^{{\dim}K}c(\underline{\xi_i^M})(\calL_{\xi_i}-\sqrt{-1}\mu_i)
\]
and 
\[
 D_{K'}=\sum_{i=1}^{{\dim}K'}c(\underline{\xi_i^M})(\calL_{\xi_i}-\sqrt{-1}\mu_i).  
 \]We also define $D_{K,K'}$ by 
\[
D_{K,K'}:=D_K-D_{K'}. 
\]
Take and fix cut-off functions $\varphi_{\rho}$ for $V_\rho$ and $\varphi_{\rho'}$ for $V_{\rho'}$ as in (\ref{eq:cut-off fct}). 
We put  $\varphi_{\rho,\rho'}:=\varphi_\rho\varphi_{\rho'}$. 
There exists $t>0$ such that the deformation 
\begin{equation}\label{eq:deformt}
D+t\varphi_{\rho,\rho'}^4D_K
\end{equation}
gives a Fredholm operator on the isotypic component $L^2(W_\rho)^{(\rho)}$. 
The almost same argument in the proof of Theorem~\ref{thm:B=F} implies that for any $t'\geq t$ 
the deformation 
\[
D+\varphi_{\rho,\rho'}^4(t'D_{K'}+tD_{K,K'})
\]is Fredholm on  $L^2(W_\rho)^{(\rho)}$ and its Fredholm index is same as that of (\ref{eq:deformt}). 
On the other hand for fixed such $t$ the family 
\[
D+\varphi_{\rho,\rho'}^4(t'D_{K'}+\epsilon tD_{K,K'}) \quad (\epsilon\in [0,1])
\]satisfies the coercivity on the interior of $\varphi_{\rho,\rho'}^{-1}(1)$ for $t'\geq t$ large enough. 
It implies 
\begin{eqnarray*}
{\rm index}_H(D+t'\varphi_{\rho,\rho'}^4D_{K'})  
&=&  {\rm index}_H(D+\varphi_{\rho,\rho'}^4(t'D_{K'}+ tD_{K,K'})) \\
&=&  {\rm index}_H(D+t\varphi_{\rho,\rho'}^4D_{K}). 
\end{eqnarray*}
The excision property implies 
\[
[Z_\rho]={\rm index}_H(D+t\varphi_{\rho}^4D_{K})  ={\rm index}_H(D+t\varphi_{\rho,\rho'}^4D_{K}) 
\]and 
\[
{\rm index}_H(\hat D_{\rho',\rho})=
{\rm index}_H(D+t'\varphi_{\rho'}^4D_{K'})  
={\rm index}_H(D+t'\varphi_{\rho,\rho'}^4D_{K'}),   
\]which complete the proof. 
 \end{proof}

\begin{Rem}
To show Lemma~\ref{lem:rankreduce} we do not use the assumption $Z_{\rho}=M^K$.  
\end{Rem}

\begin{prop}\label{prop:vanishvect}
Theorem~\ref{thm:vanish} is true when $M$ is a small open disc around the 
origin of a Hermitian vector space on which the $K$-action is linear and $M^K$ consists of the origin. 
\end{prop}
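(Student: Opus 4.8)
The plan is to make everything completely explicit in the linear model. So let $V$ be a Hermitian vector space with a linear $K$-action, $K$ a torus, and assume $V^K = \{0\}$. Decompose $V = \bigoplus_{j} V_{\beta_j}$ into weight spaces for nonzero weights $\beta_j \in \frakk^*$; after choosing coordinates each $V_{\beta_j}$ is a copy of $\bbC$ with $S^1$ acting through the character $\beta_j$. Since $M$ is a small disc around the origin, by the excision formula (\ref{eq:excision}) the index $[Z_\rho] = {\rm index}_H(\hat D_\rho)$ does not change if we replace $M$ by all of $V$ with a $K\times H$-invariant metric that is cylindrical near infinity in the sense of Section~\ref{sec:Product fomula}; in fact up to such a modification $V$ is precisely a product $(M_2)^{\dim V}$ of the "vector space" models of Example~\ref{ex:disc}, one factor for each $\bbC$-summand, with the line bundle $L\otimes\bbC_{(\rho)}$ splitting as an outer tensor product of the corresponding line bundles (the weight $\rho$ distributes over the factors since $\rho$ is a character of the full torus, while $H$ acts only on the base point, i.e. trivially on each fiber factor in a way compatible with the product).

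**Reducing to rank one and applying the product formula.** First I would use the product formula (Proposition~\ref{prop:product}, built from Proposition~\ref{prop:productformula1}) to write
\[
[V] = [M_{2,\rho_1}] \otimes \cdots \otimes [M_{2,\rho_d}]
\]
where $d = \dim_{\bbC} V$ and $\rho_i$ are the pieces of $\rho$ on each factor. The key point is that if any single weight $\beta_j$ is nonzero — which holds for \emph{every} $j$ since $V^K=\{0\}$ — then on that factor the relevant circle subtorus $K_j$ acts nontrivially, and I want to invoke the rank-reducing Lemma~\ref{lem:rankreduce}: there is a subtorus $K'$ (here a single circle acting on one $\bbC$-factor) and a character $\rho'$ with $\iota_{K'}^*\rho=\rho'$ such that ${\rm index}_H(\hat D_\rho) = {\rm index}_H(\hat D_{\rho',\rho})$, i.e. the index equals the $H$-equivariant index of the deformed operator restricted to the smaller isotypic component $L^2(W_\rho)^{(\rho)} \subset L^2(W_\rho)^{(\rho')}$. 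The upshot should be that the whole computation is governed by what happens on a single $\bbC$-factor with a nontrivial $S^1$-action.

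**The rank-one vanishing.** The heart of the argument is then the following: on $M_2 = \bbC$ with nontrivial weight-$b$ $S^1$-action, the local index attached to the \emph{wrong} lattice point must vanish. Concretely, by Proposition~\ref{prop:cylinder2} we have $[M_{2,\rho}](\tau) = \delta_{\rho\tau}$, so when we pair against $\rho$ itself the index is $1$, but the relevant quantity for the vanishing theorem is $[Z_\rho]$ with $Z_\rho = M^K = \{0\}$ — and because $V^K = \{0\}$ and the weight is nonzero, the isotypic component $L^2(W_\rho)^{(\rho)}$ over this model, after the rank reduction, carries \emph{no} harmonic sections: the section that would contribute is a monomial $z^k \cdot (\text{parallel section})$ whose $K$-weight at the origin is forced by the Bohr–Sommerfeld-type identity $\rho = \mu(0)$ of Proposition~\ref{prop:BS->Z}, and the compatibility of this with the $L^2$-condition and the deformation sign forces the kernel to be zero. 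I would extract this from a direct computation in the oscillator/model operator: the deformed operator $D + t\varphi^4 D_K$ on $\bbC$ has kernel spanned by a Gaussian-type section whose weight is $\rho$, but in the reduced isotypic component $(\rho)$ inside $(\rho')$, combined with the fact that $Z_\rho$ is just the origin (not a positive-dimensional $M^K$), this section fails to lie in the correct isotypic component, or fails $L^2$-integrability because the moment map value at the origin does not match — hence $\ker = 0$ and the index is $0$.

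**The main obstacle.** I expect the delicate step to be the bookkeeping of \emph{which} isotypic component the harmonic section lives in after the rank reduction of Lemma~\ref{lem:rankreduce}, together with the interplay between the hypothesis $Z_\rho = M^K$ (Assumption~\ref{assump:vanish}) and the case here where $M^K$ is a single point: one must verify that the kernel of the model operator, computed explicitly as in Examples~\ref{ex:cylinder}–\ref{ex:disc}, sits in a $K$-weight that is \emph{not} $\rho$, so that its contribution to $[Z_\rho](\rho)$ — and more generally to ${\rm index}_H(\hat D_\rho)$ as an element of $R(H)$, where $H$ acts only through how it moves the origin, i.e. trivially on the fiber data — is zero. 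Once the single-factor computation is pinned down, the product formula and Lemma~\ref{lem:rankreduce} propagate the vanishing to all of $V$, giving $[Z_\rho] = {\rm index}_H(\hat D_\rho) = 0 \in R(H)$, which is exactly the claim of Proposition~\ref{prop:vanishvect}.
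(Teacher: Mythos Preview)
Your proposal has a genuine gap: you never use the crucial hypothesis from Assumption~\ref{assump:vanish} that $L^K$ equals the zero section of $L|_{M^K}$, i.e.\ that $K$ acts \emph{nontrivially} on the fibre $L|_0$. Without this hypothesis the conclusion is simply false --- indeed your own invocation of Proposition~\ref{prop:cylinder2} shows the model index is $1$, and your subsequent attempt to argue it is ``really'' $0$ via a weight mismatch is hand-waving that never identifies which hypothesis forces the mismatch. The decomposition $V\cong (M_2)^d$ is also not valid as stated: the weights $\beta_j$ need not be linearly independent (take $K=S^1$ acting on $\bbC^2$ with weights $(1,1)$), so there is no product torus $\prod K_j$ acting factor-wise, and Proposition~\ref{prop:product} does not apply. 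Finally, the circle you pick in the rank-reduction step --- ``a single circle acting on one $\bbC$-factor'' --- may well act trivially on $L|_0$, so even after Lemma~\ref{lem:rankreduce} you land in a rank-one model where vanishing need not hold.

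The paper's argument repairs exactly these points. First it reduces to $\rho=\mathbf{0}$ by twisting (so one only needs the $K$-invariant part). Then, using that the $K$-weight on $L|_0$ is nonzero, it chooses a \emph{generic} circle $K_1\subset K$ satisfying simultaneously $M^{K_1}=\{0\}$ (by avoiding the hyperplanes $\beta_j^\perp$) and $K_1$ acts nontrivially on $L|_0$ (by taking $K_1$ along a rational splitting of the nonzero character $K\to U(1)$ on $L|_0$). Lemma~\ref{lem:rankreduce} then reduces to the $K_1$-deformation, and the rank-one vanishing is not computed from scratch but quoted from \cite[Proposition~6.8]{Fujita-Furuta-Yoshida3} together with Theorem~\ref{thm:FFY=F}. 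The essential idea you are missing is that the choice of circle must be adapted to the line bundle weight, not to the weight-space decomposition of $V$.
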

\begin{proof}
By considering the tensor product it suffices to prove in the case that $\rho$ is the trivial representation ${\bf 0}$.   
We can choose an appropriate generic circle subgroup $K_1$ of $K$ so that $K_1$ acts on $M$ with $M^{K_1}=\{0\}$ and 
the $K_1$-action on $L|_0$ is nontrivial. 
In fact let $\rho_1,\ldots, \rho_{\dim M}\in {\rm Irr}(K)$ be the weights appeared in the linear action on $M$, 
all of which are non-zero by the assumption $M^{K}=\{0\}$, then we can take a splitting of the differential 
of the representation $K\to U(1)$ on $L|_0$ such that the image of the splitting in ${\frakk}$ is rational and 
is not perpendicular to any $\rho_i$. The subgroup of the image gives the desired circle subgroup. 
By Lemma~\ref{lem:rankreduce} we have 
\[
{\rm index}(\hat D_{\bf 0})={\rm index}(D+t\varphi_{\bf 0}^4 D_K)={\rm index}(D+t\varphi_{\bf 0}^4 D_{K_1})\in\bbZ. 
\]On the other hand \cite[Proposition~6.8]{Fujita-Furuta-Yoshida3} and Theorem~\ref{thm:FFY=F} imply 
 \[
 {\rm index}(D+t\varphi_{\bf 0}^4 D_{K_1})=0, 
 \]and we complete the proof. 
\end{proof}

\begin{proof}[Proof of Theorem~\ref{thm:vanish}] 
The claim follows from Proposition~\ref{prop:vanishvect} and the product formula (Proposition~\ref{prop:productformula2})
with the same argument in \cite[Section~6.4]{Fujita-Furuta-Yoshida3}. 
\end{proof}
  
  \section{Quantization of non-compact Hamiltonian torus manifolds} 
  \label{sec:Riemann-Roch character  of proper Hamiltonian torus space}
  In this section by using the ingredients established in the previous sections 
  we define quantization of non-compact symplectic manifolds 
 equipped with Hamiltonian group action and show [Q,R]=0 for circle action case and a Danilov-type fomula for toric action case. 
  
  \subsection{Definition : general case}
  Let $K$ be a compact torus and $M$ a symplectic manifold equipped with Hamiltonian $K$-action. 
  Suppose that there exists a $K$-equivariant prequantizing line bundle $(L,\nabla)$ 
  and let $\mu:M\to \frakk^*$ be the associated moment map.  
  We use the Clifford module bundle $W=\wedge^\bullet T_\bbC M\otimes L$ for a $K$-invariant compatible almost complex structure. 
  We assume the following for the moment : 
  \begin{assump}\label{assump:Zcompact}
  For each $\rho\in {\rm Irr}(K)$ the zero set   $Z_{\rho}={\rm Zero}(\underline\rho-\underline\mu)$ is  compact.   
  \end{assump}
  
  \begin{dfn}\label{dfn:equivindHam}
  We define its quantization $\calQ_K(M)\in R^{-\infty}(K)$ by 
  \begin{equation}
  \calQ_K(M)(\rho):=[\wtilde X_{\rho}](\rho) \in \bbZ \quad (\rho\in {\rm Irr}(K)), 
  \end{equation}
  where $\wtilde X_{\rho}$ is a complete manifold containing $Z_{\rho}$ as its neighborhood 
  on which the Dirac operator along orbits defined as in Definition~\ref{dfn:exD_K}   
  gives a $\rho$-acyclic orbital Dirac-type operator for $\wtilde X_\rho\setminus Z_{\rho}$. 
  \end{dfn}
 
  The excision property guarantees that the number $\calQ_K(M)(\rho)$ is independent from the choice of such $\wtilde X_{\rho}$. 
  Theorem~\ref{thm:localizationformula} enable us to describe $\calQ_K(M)(\rho)$ into the sum of local contributions 
  \[
  \calQ_K(M)(\rho)=[\mu^{-1}(\rho)]+\sum_{\alpha\in{\rm Irr}(K)\setminus\{\rho\}}[Z_{\rho, \alpha}]. 
  \]It would be natural to expect the vanishing of $[Z_{\rho, \alpha}]$. 
  One possible way to show this vanishing is using a combination of the coincidence of $[Z_{\rho,\alpha}]$ with the transverse index and 
  vanishing results for it, e.g., by Paradan \cite{ParadanlocRR}. 
  In the subsequent subsections, instead of using them, we have the vanishing of $[Z_{\rho, \alpha}]$ 
  for the circle action case and toric case
  based on Theorem~\ref{thm:vanish}, and we define the quantization $\calQ_K(M)$ under a weaker assumption than 
  Assumption~\ref{assump:Zcompact}.

 The quantization $\calQ_K(M)$ is a generalization of $K$-equivariant spin$^c$ quantization using the index of 
 Dolbeault-Dirac operator 
  in the compact case, which is often denoted by $RR_K(M)$ and  
  called the {\it equivariant Riemann-Roch number} or {\it Riemann-Roch character}.

  \subsection{[Q,R]=0 for non-compact Hamiltonian torus manifolds}
  \label{subsec:[Q,R]=0 for proper Hamiltonian torus space}
  In this subsection we consider the case $K=S^1$. 
  Since in this case one has 
  \[
  Z_{\rho}=\mu^{-1}(\rho)\cup M^{K}, 
  \]for each $\rho \in {\rm Irr}(K)=\Lambda^*$ and $\mu(M^{K})\subset \Lambda^*$
   the quantization $\calQ_K(M)$ has a localization property to $\Lambda^*$. 
   Moreover one has a decomposition 
   \[
   M^K=\bigcup_{\alpha\in\Lambda^*} M^K\cap \mu^{-1}(\alpha)
   \]which gives us a decomposition of the index 
   \[
   [Z_{\rho}]=[\mu^{-1}(\rho)]+\sum_{\alpha\in\Lambda^*\setminus\{\rho\}}[M^{K}\cap\mu^{-1}(\alpha)]\in\bbZ, 
   \] where we use the notation as in Section~\ref{subsec:Non-complete case and localization formula}.  
   Proposition~\ref{prop:BS->Z} and  Theorem~\ref{thm:vanish} implies that we have 
   \[
  [M^{K}\cap\mu^{-1}(\alpha)]=0 \quad (\alpha\in\Lambda^*\setminus\{\rho\}). 
  \] This observation enable us to define $\calQ_K(M)$ by 
  \[
  \calQ_K(M)(\rho):=[\mu^{-1}(\rho)] 
  \]without Assumption~\ref{assump:Zcompact}.  
  We only need the following assumption : 
  \begin{assump}\label{assump:properness}
  The  preimage  of  each lattice   point  in $\Lambda^*$  is compact. 
  \end{assump}
   This definition leads us to a proof of [Q,R]=0, the principal of \lq\lq quantization commutes with reduction\rq\rq, 
   as in \cite{Fujita-Furuta-Yoshida3} in the non-compact case. 
 
 For a regular value $\xi\in{\frakk}^*$ of $\mu:M\to{\frakk}^*$ let $M_\xi$ be the symplectic quotient at $\xi$: 
 \[
 M_\xi:=\mu^{-1}(\xi)/K, 
 \] which is a closed symplectic manifold (orbifold) under Assumption~\ref{assump:properness}. 
 Moreover if a regular value $\rho$ is an element of ${\rm Irr}(K)$, then there exists a natural prequantizing line bundle 
 over $M_\rho$, and hence, one can define the Riemann-Roch number $RR(M_\rho)$ as the 
 index of the Dolbeault-Dirac operator associated with a $K$-invariant compatible almost complex structure.

  \begin{thm}\label{noncpt[Q,R]=0}
  Suppose that $\rho\in{\rm Irr}(K)$ is a regular value of the moment map $\mu:M\to {\frakk}^*$. 
  Then we have 
  \[
\calQ_K  (M)(\rho)=RR(M_\rho). 
  \]
  \end{thm}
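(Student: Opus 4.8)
The plan is to reduce the statement to the already-established local index $[\mu^{-1}(\rho)]$ and then identify this local contribution with $RR(M_\rho)$ by recognizing a neighborhood of $\mu^{-1}(\rho)$ as the total space of a fiber bundle to which the product formula of Section~\ref{sec:Product fomula} applies. First I would invoke the reduction already carried out in this subsection: under Assumption~\ref{assump:properness}, the vanishing $[M^K\cap\mu^{-1}(\alpha)]=0$ for $\alpha\neq\rho$ (from Proposition~\ref{prop:BS->Z} and Theorem~\ref{thm:vanish}) lets us set $\calQ_K(M)(\rho)=[\mu^{-1}(\rho)]$, where the right-hand side is the index computed on a $K$-invariant complete manifold $\wtilde X_\rho$ containing a neighborhood of $\mu^{-1}(\rho)$ with a cylindrical end, with the Dirac operator along orbits from Definition~\ref{dfn:exD_K}. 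So it remains to prove $[\mu^{-1}(\rho)]=RR(M_\rho)$.

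The key step is a neighborhood-model argument. Since $\rho$ is a regular value, the $K$-action on $\mu^{-1}(\rho)$ is locally free, so by the coisotropic (equivariant Darboux--Weinstein) neighborhood theorem a $K$-invariant neighborhood of $\mu^{-1}(\rho)$ in $M$ is $K$-equivariantly symplectomorphic to a neighborhood of the zero section in $\mu^{-1}(\rho)\times_K \frakk^*$, i.e.\ to the associated bundle $(P\times T^*K)/K$ over $M_\rho=\mu^{-1}(\rho)/K$, where $P=\mu^{-1}(\rho)$ viewed as a principal $K$-bundle (orbifold bundle) over $M_\rho$; the prequantum line bundle and compatible almost complex structure likewise descend/pull back compatibly, the line bundle over the $T^*K$-fiber being the one built from $\bbC_{(\rho)}$ as in Example~\ref{ex:localmodel2}. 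By the excision formula (\ref{eq:excision}) the index $[\mu^{-1}(\rho)]$ is computed on this model, and Example~\ref{ex:localmodel2} (which is exactly Proposition~\ref{prop:productformula2} applied to $M=T^*K$ with the acyclic orbital Dirac operator whose index is the delta function at $\rho$) gives
\[
[\wtilde M](\rho')=\mathrm{index}(E)\,\delta_{\rho\rho'},
\]
where $E\to M_\rho$ is the Clifford module bundle $\wedge^\bullet T_\bbC M_\rho\otimes L_\rho$ obtained from the reduced data and $\mathrm{index}(E)=RR(M_\rho)$. Evaluating at $\rho'=\rho$ yields $[\mu^{-1}(\rho)]=RR(M_\rho)$, which combined with the first step gives $\calQ_K(M)(\rho)=RR(M_\rho)$.

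The main obstacle, I expect, is the careful verification that the local symplectic model genuinely matches the structures appearing in Example~\ref{ex:localmodel2}: one must check that the reduced almost complex structure and reduced prequantum line bundle on $M_\rho$ are the ones whose Dirac operator has index $RR(M_\rho)$, that the $\bbC_{(\rho)}$-twist along the $T^*K$-direction is correctly accounted for so the isotypic component picks out precisely the weight $\rho$ (this is where $\mu(\mu^{-1}(\rho))=\{\rho\}\subset\Lambda^*$ and Proposition~\ref{prop:BS->Z} are used), and that the orbifold (locally free rather than free) case is handled—either by passing to a global quotient presentation or by noting the product formula and the index computations are insensitive to finite isotropy. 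Secondary bookkeeping: the end of $\wtilde X_\rho$ must be arranged so that the acyclic orbital Dirac operator has the required translational invariance, which is possible exactly because the critical set $\mu^{-1}(\rho)$ is compact under Assumption~\ref{assump:properness} and the linear-algebra conditions of Proposition~\ref{prop:sufficientcondD_K} hold on the cylindrical model $P\times_K T^*K$.
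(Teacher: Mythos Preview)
Your proposal is correct and follows essentially the same route as the paper: reduce to $\calQ_K(M)(\rho)=[\mu^{-1}(\rho)]$ via the vanishing of fixed-point contributions, identify a neighborhood of $\mu^{-1}(\rho)$ with the associated $T^*K$-bundle $(T^*K\times\mu^{-1}(\rho))/K$ over $M_\rho$ by a Darboux-type theorem, and then invoke the product formula of Example~\ref{ex:localmodel2} to obtain $[\mu^{-1}(\rho)]=RR(M_\rho)$. The paper's proof is terser, simply citing the Darboux-type lemma and Example~\ref{ex:localmodel2}, whereas you spell out the matching of reduced structures, the $\bbC_{(\rho)}$-twist, and the orbifold caveat---but these are elaborations of the same argument rather than a different one.
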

  \begin{proof}
  A neighborhood of $\mu^{-1}(\rho)$ in $M$ can be identified with the product 
  \[
  (T^*K\times \mu^{-1}(\rho))/K
  \]by the Darboux-type theorem (see \cite[Lemma~7.1]{Fujita-Furuta-Yoshida3} for example), 
  which has a structure of $T^*K$-bundle over $M_\rho$. 
  By applying the product formula in Example~\ref{ex:localmodel2} we have 
  \[
  [\mu^{-1}(\rho)]=RR(M_\rho). 
  \]
  \end{proof}

  \begin{Rem}
  \begin{enumerate}
  \item Even for a higher rank torus case, by choosing a circle subgroup generic enough one can give a proof of Theorem~\ref{noncpt[Q,R]=0} 
  by induction. 
  \item Due to Corollary~\ref{cor:F=Transverse} the quantization $\calQ_K(M)$ can be identified with 
  Atiyah's transverse index. 
  Theorem~\ref{noncpt[Q,R]=0} gives an alternative proof of Vergne's conjecture for torus case to Ma-Zhang's proof  
  in \cite{Ma-Zhangarxiv} which uses Braverman's deformation. 
  \item The above construction and a proof of Theorem~\ref{noncpt[Q,R]=0} is essentially same as those in \cite{FujitaS1}. 
  \end{enumerate}
  \end{Rem}

  \subsection{A Danilov-type formula for non-compact toric manifolds}
  \label{sec:A Danilov type formula for proper toric manifold}
   Now we focus on the {\it symplectic toric} case. Namely we assume that $K$ is a torus with $2\dim(K)=\dim(M)$. 
   In this case Assumption~\ref{assump:properness} is automatically satisfied because the preimage of each point is a single orbit. 
 We can define the quantization $\calQ_K(M)$ as it is noted in the previous section.  
 In fact for each $\rho\in{\rm Irr}(K)$ the image  $\mu(Z_{\rho,\alpha})$ of the component of $Z_{\rho}$  is 
 contained in the boundary of the momentum polytope $\mu(M)$, and one can see $[\mu^{-1}(\rho)]=1$  and $[Z_{\rho,\alpha}]=0$ 
 by the same argument in \cite[Section~6.1]{Fujitaorigami} 
 together with Proposition~\ref{prop:BS->Z} and  Theorem~\ref{thm:vanish}.  
 These observations enable us to define $\calQ_K(M)\in R^{-\infty}(K)$ and give the following description, 
 which is a non-compact generalization of Danilov's formula.

  \begin{thm}\label{thm:noncptDanilov}
  Under the above set-up we have 
  \[
  \calQ_K(M)=\sum_{\rho\in \mu(M)\cap \Lambda^*}\bbC_{(\rho)},  
  \]where the right hand side is an element in $R^{-\infty}(K)$ which is characterized by  
  \[
  {\rm Irr}(K)\ni \rho' \mapsto 
  \begin{cases}
  1 \quad (\rho'\in \mu(M)\cap \Lambda^*) \\
  0 \quad (\rho'\notin\mu(M)\cap \Lambda^*). 
  \end{cases}
  \] 
  \end{thm}
  
  \begin{Rem}
  In a general framework of geometric quantization one uses an additional structure called a {\it polarization}, 
  which is an integrable Lagrangian distribution of the complexification of the tangent bundle. 
  One typical example is a {\it K\"ahler polarization} which is defined as a compatible complex structure. 
  Our quantization is the spin$^c$ quantization, which is a quantization based on a polarization relaxed the integrality condition 
  in the K\"ahler polarization.   
  The quantization is given by the Fredholm index of the Dolbeault-Dirac operator. 
  The other example is a {\it real polarization}, which is defined by the tangent bundle along fibers of the Lagrangian fibration. 
  In the real polarization case it is known that the quantization can be described by {\it Bohr-Sommerfeld fibers}, which are 
  characterized by the existence of non-trivial global parallel sections of the prequantizing line bundle on the orbits.  
  The moment map of toric manifolds can be regarded as a real polarization with singular fibers.  
  In the toric case, the Bohr-Sommerfeld fibers are nothing other than the inverse images of the integral lattice points in the 
  momentum polytope. 
  
  One important topic in geometric quantization is the problem of independence from the polarizations.  
  There are several results supporting the coincidence between the quantizations obtained by the spin$^c$ polarization 
  and the real polarization from the view point of index theory, such as \cite{Andersen, Fujita-Furuta-Yoshida1, Kubota,Yoshida100}.  
  Theorem~\ref{thm:noncptDanilov} can be considered as a non-compact version of the above results. 
  \end{Rem}

  \begin{Rem}
  In \cite{Fujitaorigami} we gave a proof of Danilov's formula for compact symplectic toric manifolds 
  (or more generally for {\it toric origami manifolds}) using 
  a localization formula based on the theory of the {\it 
  acyclic compatible  fibration/system} developed in \cite{Fujita-Furuta-Yoshida2}. 
  Since one can see that the acyclic compatible fibration constructed on a given toric manifold 
  does not have a product structure in general, we cannot apply the product formula and 
  have to compare the resulting index with the index of the product. 
  One remarkable difference in the computation of the local contribution is that 
  our deformation by $D_K$ fits into the local product structure of a neighborhood of $\mu^{-1}(\rho)$. 
  In particular we can apply the product formula directly.   
  \end{Rem}
  
 \section{Comments and further discussions}
 \label{sec:Comments and further discussions}
 \subsection{Application to quantization of Hamiltonian loop group spaces}
 \label{subsec:Application to quantization of Hamiltonian loop group spaces}
 Quantization of Hamiltonian loop group spaces is studied in various directions. 
 In particular Loizides-Song \cite{LoizidesSong} studied it from the view point of index theory and KK-theory.  
 Their construction is based on their previous work \cite{Loizides-Song-Meinrenken} with Meinrenken 
 in which they constructed a spinor bundle over a proper Hamiltonian loop group space and 
 a nice finite dimensional non-compact submanifold in it, which is transverse to the orbits of the loop group action. 
 One key ingredient in \cite{LoizidesSong} is to associate a K-homology cycle to such a non-compact manifold. 
 They established an index theory using the ${\rm C}^*$-algebraic condition which they call the {\it $(\Gamma, K)$-admissibility}, 
 where $K$ is a compact Lie group and $\Gamma$ is a countable discrete group with proper length function. 
 They showed that in the proper Hamiltonian loop group space case the $(\Lambda, T)$-admissibility is 
 satisfied for a maximal torus $T$ of $K$, and 
 the resulting K-homology class has an anti-symmetric property with respect to some Weyl group action of $K$, 
 which gives rise quantization as an element in the fusion ring of $K$.

 In this paper we constructed  a similar K-homology cycle without using $(\Gamma, K)$-admissibility. 
 In the subsequent research we will investigate an approach of quantization of Hamiltonian loop group spaces  
 by incorporating the action of the integral lattice $\Lambda$ in our construction appropriately. 
 In such an approach it would be interesting to understand how the localization phenomenon of our index is 
 reflected in the quantization of loop group spaces. 
 
There is an another related work by Takata. 
In \cite{Takata} an $LS^1$-equivariant index is constructed as an element in the fusion ring 
from the view point of KK-theory and non-commutative geometry. 
He also developed an index theorem in infinite dimensional setting in \cite{Takata3, Takata2}. 
 It would be also interesting to investigate how our construction is positioned in Takata's theory. 
 
 \subsection{Deformation as  KK-products}
 \label{subsec:Deformation as  KK-products}
 Motivated by the pioneering work by Kasparov \cite{Kasparov},  
 Loizides-Rodsphon-Song showed  in \cite{LoizidesRodsphonSong} that the K-homology class obtained by 
 Braverman's deformation factors as a KK-product of the Dirac class and a KK-class arising from the 
 deformation. 
 It is desirable to understand our deformation using the acyclic orbital Dirac-type operator as a KK-product. 

\appendix

\section{Appendix : $K$-acyclic orbital Dirac-type operator}\label{sec:Set-up and main assumptions}
In this appendix we give a general machinery to have an equivariant index and a K-homology cycle 
using a deformation by differential operator along orbits. 
Though the machinery itself works for non-abelian case, we do not know good example except $D_K$ as in 
Definition~\ref{dfn:exD_K} for the moment. 
 
 \subsection{Set-up and definition}
 \label{subsec:Set-up and definition}

 Let $M$ be a complete Riemannian manifold and 
 $W\to M $ a $\bbZ/2$-graded ${\rm Cl}(TM)$-module bundle with the Clifford multiplication $c:TM\cong T^*M\to {\rm End}(W)$.  
 
 Let $K$ be a compact Lie group acting on $M$ in an isometric way. 
 We assume that the $K$-action lifts to a unitary action of $W$.   
 Take and fix a $K$-invariant Dirac-type operator $D:\Gamma_c(W)\to \Gamma_c(W)$. 
 
\begin{dfn}[$\rho$-acyclic and $K$-acyclic orbital Dirac-type operator]\label{dfn:K-acyclic orbital system}
Let $\rho$ be an element of ${\rm Irr}(K)$.  A pair 
 \[
( D_K, V_\rho)
 \]is called a {\it $\rho$-acyclic orbital Dirac-type operator on $(M,W)$} if the following conditions are satisfied. 
\begin{enumerate}
 \item $D_K:\Gamma_c(W)\to \Gamma_c(W)$ is a $K$-invariant first order self-adjoint differential operator such that : 
 \begin{enumerate}
 \item $D_K$ contains only differentials along $K$-orbits, and the restriction to each $K$-orbit is an elliptic operator on the orbit. 
 \item  $D_K$ is finite propagation speed, i.e., the principal symbol $\sigma(D_K):T^*M\to {\rm End}(W)$ satisfies 
 \[
\sup \{\|\sigma(D_K)(v)\| \ | \ v\in T^*M, \ |v|=1 \} < \infty. 
 \]
 \item $D_K$ anti-commutes with the Clifford multiplication of the transverse direction to orbits. 
 Namely for any $K$-invariant function $h$ on $M$ one has 
\[
D_Kc(dh)+c(dh)D_K=0. 
\]
\item\label{cond:D_Kbdd} 
The isotypic component $D_K^{(\rho)}$ gives a bounded operator on $L^2(W)^{(\rho)}$. 
 \end{enumerate}  
 \item 
 $V_{\rho}$ is an open subset of $M$ such that $M\setminus V_\rho$ is compact. 
 \item\label{cond:K-acyclic}  
 We have 
 \[
 \ker(D_{K}|_{V_\rho})^{(\rho)}=0. 
 \]
  \item\label{cond:C_pi} 
 There exists a constant
 \footnote{The third condition implies that $(D_K^2)^{(\rho)}$ is a strictly positive operator on each $K$-orbit. 
 On the other hand as in Lemma~\ref{lem : lemonD_K} the condition (c) implies that $DD_K+D_KD$ is also a differential operator along the orbits, 
 we can take such a constant $C_\rho$ for each orbit. This condition means that 
 we can take such constants uniformly on $V_\rho$. } $C_{\rho}>0$ such that 
  \[
  |( (DD_{K}+D_{K}D)s, s )_W|\leq C_{\rho}( D_{K}^2 s,s)_W
  \]and 
  \[
  |( D_{K}s, s )_W|\leq C_{\rho}( D_{K}^2 s,s)_W
  \]
hold  for any $s\in \Gamma_c(W|_{V_\rho})^{(\rho)}$.

  \item\label{cond:eigenposi} 
  There exists a constant $\kappa_\rho>0$ such that 
  \[
  \kappa_\rho( s, s)_W \leq ( D_{K}^2 s,s)_W
  \]
  holds for any $s\in \Gamma_c(W|_{V_\rho})^{(\rho)}$. 
 \end{enumerate} 
 If a family of open subsets $\{V_{\rho}\}_{\rho\in{\rm Irr}(K)}$ gives a $\rho$-acyclic orbital Dirac-type operator $(D_K,V_\rho)$
 for each $\rho\in {\rm Irr}(K)$, 
 then we call $(D_K, \{V_{\rho}\}_{\rho\in{\rm Irr}(K)})$ the {\it $K$-acyclic orbital Dirac-type operator}. 
 \end{dfn}

 The completeness of $M$ implies that there exists a $K$-invariant smooth proper function $f:M\to[1,\infty)$ such that 
 \[
\|df\|_{\infty}:=\sup_{x\in M}|df_x|<\infty.  
 \]
 We take and fix such $f$. 
 For each $\rho\in{\rm Irr}(K)$ we take and fix a $K$-invariant cut-off function 
 \begin{equation}\label{eq:cut-off fct}
 \varphi_\rho:M\to [0,1]
 \end{equation} such that 
 \[
 \varphi_\rho\equiv 0 \ {\rm on \ a \ sufficiently \ small \  compact \ neighborhood \ of} \ M\setminus V_\rho 
 \]
 and 
 \[
 \varphi_\rho\equiv 1 \ {\rm on \ the \ complement \ of \ a \ relatively \ compact \ neighborhood \ of} \  M\setminus V_\rho. 
\]
 We put $f_\rho:=\varphi_\rho f$. 
We consider the deformation of $D$ defined by 
 \[
\hat D_\rho:=D+f_{\rho}^2D_Kf_{\rho}^2=D+f_\rho ^4 D_{K}.  
 \]
Since $D$ and $D_K$ has finite propagation speed,  $\hat D_\rho$ is an 
essentially self-adjoint operator on $L^2(W)$.  
Moreover one can see that $\hat D_\rho$ is {\it transversally elliptic} in the sense of Atiyah  \cite{Atiyah1}. 
In fact for any $K$-invariant function $h:M\to \R$, since $D_K$ commutes with the multiplication by $h$ one has  
\[
\hat D_\rho h -h \hat D_\rho= Dh - hD =c(dh), 
\]which is invertible unless $dh=0$.

Hereafter we mainly consider the isotypic component $\hat D_{\rho}^{(\rho)}$. 
Even if so we often omit the superscript $(\cdot)^{(\rho)}$ of the isotypic component for simplicity and 
use the notation as $\hat D_{\rho}:L^2(W)^{(\rho)}\to L^2(W)^{(\rho)}$ and so on. 

\begin{Rem}
The Clifford module structure and Dirac-type condition are not so essential. 
In fact we can establish almost all propositions, definitions,  etc.,  below for more general 
vector bundles and elliptic operators with finite propagation speed. However since we do not have 
applications of such generalizations we only handle with Clifford module bundles and 
Dirac-type operators in the present paper. 
\end{Rem}

 \subsection{Compactness and $K$-Fredholm property}
 \label{subsec:Compactness and $K$-Fredholmness}
 Let $(D_K, \{V_{\rho}\}_{\rho\in{\rm Irr}(K)})$ be a $K$-acyclic orbital Dirac-type operator on $(M,W)$. 
 We take and fix a family of functions $\{f,\{\varphi_\rho\}_{\rho\in{\rm Irr}(K)}\}$ as above.  
 \begin{prop}\label{prop:compactresolvent}
For each $\rho\in{\rm Irr}(K)$ 
there exists a smooth $K$-invariant proper function $\Phi_{\rho}:M\to \bbR$ such that 
 $\Phi_{\rho}$ is bounded below and we have 
 \[
 (\hat D^2_\rho)^{(\rho)}+1\geq (D^2)^{(\rho)}+\Phi_{\rho} 
 \]as self-adjoint operators on $L^2(W)^{(\rho)}$. 
 \end{prop}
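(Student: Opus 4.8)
The plan is to expand $(\hat D_\rho^2)^{(\rho)}$, keep the nonnegative term $\|f_\rho^4 D_K s\|^2$ as the leading term, and absorb everything else into it and into a bounded multiple of $\|s\|^2$, using the uniform estimates built into Definition~\ref{dfn:K-acyclic orbital system}; the function $\Phi_\rho$ will come out as essentially a positive multiple of $f_\rho^8$. Since $D$, $\hat D_\rho$ and multiplication by a real function are essentially self-adjoint on $\Gamma_c(W)$ (completeness of $M$, finite propagation speed), it suffices to prove the corresponding quadratic-form inequality for $s$ in the core $\Gamma_c(W)^{(\rho)}$ and then extend by the standard closure argument. Because $f_\rho=\varphi_\rho f$ is $K$-invariant it is constant along $K$-orbits, hence commutes with $D_K$ and with the orbital operator $DD_K+D_KD$; moreover $[D,f_\rho^4]=c(df_\rho^4)=4f_\rho^3 c(df_\rho)$ since the principal symbol of $D$ is $c$. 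Squaring $\hat D_\rho=D+f_\rho^4 D_K$ therefore gives the identity of differential operators
\[
\hat D_\rho^2=D^2+f_\rho^8 D_K^2+f_\rho^4(DD_K+D_KD)+c(df_\rho^4)D_K ,
\]
so that for $s\in\Gamma_c(W)^{(\rho)}$,
\[
((\hat D_\rho^2+1)s,s)-(D^2 s,s)=\|f_\rho^4 D_K s\|^2+\bigl((DD_K+D_KD)(f_\rho^2 s),f_\rho^2 s\bigr)+(c(df_\rho^4)D_K s,s)+\|s\|^2 .
\]

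The key step is to control the two middle terms. As $\supp f_\rho\subseteq V_\rho$ by construction of $\varphi_\rho$, the section $f_\rho^2 s$ lies in $\Gamma_c(W|_{V_\rho})^{(\rho)}$, so condition~\ref{cond:C_pi} gives $|((DD_K+D_KD)(f_\rho^2 s),f_\rho^2 s)|\le C_\rho\|f_\rho^2 D_K s\|^2$, while Cauchy--Schwarz together with $\|c(df_\rho)\|\le\|df_\rho\|_\infty=:c_\rho<\infty$ (finite because $d\varphi_\rho$ has relatively compact support and $\|df\|_\infty<\infty$) gives $|(c(df_\rho^4)D_K s,s)|\le 4c_\rho\|f_\rho^2 D_K s\|\,\|f_\rho s\|$. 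Now I would trade the missing powers of $f_\rho$ for bounded quantities by Young's inequality: for any $\alpha,\beta>0$,
\[
\|f_\rho^2 D_K s\|^2\le\tfrac{\alpha}{2}\|f_\rho^4 D_K s\|^2+\tfrac{1}{2\alpha}\|D_K s\|^2,\qquad \|f_\rho s\|^2\le\beta\|f_\rho^4 s\|^2+C_\beta\|s\|^2 ,
\]
and then condition~\ref{cond:D_Kbdd} bounds $\|D_K s\|^2\le\|D_K^{(\rho)}\|^2\|s\|^2$, while condition~\ref{cond:eigenposi} applied to $f_\rho^4 s\in\Gamma_c(W|_{V_\rho})^{(\rho)}$ bounds $\|f_\rho^4 s\|^2\le\kappa_\rho^{-1}\|f_\rho^4 D_K s\|^2$. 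Inserting these, using $2\|f_\rho^2 D_K s\|\,\|f_\rho s\|\le\|f_\rho^2 D_K s\|^2+\|f_\rho s\|^2$, and choosing first $\alpha$ and then $\beta$ small enough in terms of $C_\rho,c_\rho,\kappa_\rho$ only, I obtain a constant $B_\rho>0$ with
\[
((\hat D_\rho^2+1)s,s)-(D^2 s,s)\ \ge\ \tfrac12\|f_\rho^4 D_K s\|^2-B_\rho\|s\|^2 .
\]

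To finish, apply condition~\ref{cond:eigenposi} once more to $f_\rho^4 s$: $\tfrac12\|f_\rho^4 D_K s\|^2\ge\tfrac{\kappa_\rho}{2}\|f_\rho^4 s\|^2=\tfrac{\kappa_\rho}{2}(f_\rho^8 s,s)$, hence
\[
((\hat D_\rho^2+1)s,s)\ \ge\ (D^2 s,s)+\bigl((\tfrac{\kappa_\rho}{2}f_\rho^8-B_\rho)s,s\bigr)
\]
for all $s\in\Gamma_c(W)^{(\rho)}$. Thus $\Phi_\rho:=\tfrac{\kappa_\rho}{2}f_\rho^8-B_\rho$ does the job: it is smooth and $K$-invariant because $f_\rho$ is, it is bounded below by $-B_\rho$, and it is proper because $f_\rho=f$ outside a relatively compact set and $f$ is proper, so $f_\rho^8\to\infty$ at infinity; the form inequality on the core then extends to the asserted operator inequality on $L^2(W)^{(\rho)}$.

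The main obstacle is exactly this absorption. The term $f_\rho^4(DD_K+D_KD)$ is dominated by $f_\rho^4 D_K^2$ only with a constant $C_\rho$ that need not be small, and $c(df_\rho^4)D_K$ has size comparable to $f_\rho^3\|D_K s\|$; a priori either could swamp the leading term $\|f_\rho^4 D_K s\|^2$. What makes the estimate close is that both bad terms carry strictly fewer powers of $f_\rho$ than $f_\rho^8 D_K^2$, so Young's inequality converts the surplus into a bounded multiple of $\|D_K s\|^2$, finite precisely by the boundedness hypothesis~\ref{cond:D_Kbdd}; and the uniform positivity~\ref{cond:eigenposi} is used twice, once to absorb the $\|f_\rho s\|$-type cross term and once to manufacture the proper potential $f_\rho^8$ out of the surviving $\|f_\rho^4 D_K s\|^2$. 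A secondary, purely bookkeeping, point is to check at each step that the cut-off and multiplied sections remain in $\Gamma_c(W|_{V_\rho})^{(\rho)}$, so that conditions~\ref{cond:C_pi} and~\ref{cond:eigenposi} genuinely apply.
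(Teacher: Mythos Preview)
Your proof is correct and follows essentially the same strategy as the paper's: both expand $\hat D_\rho^2$, absorb the cross terms into $\|f_\rho^4 D_K s\|^2$ and a constant multiple of $\|s\|^2$ using conditions~\ref{cond:C_pi}, \ref{cond:eigenposi} and \ref{cond:D_Kbdd}, and arrive at $\Phi_\rho=\tfrac{\kappa_\rho}{2}f_\rho^8+\mathrm{const}$. The only difference is in bookkeeping: the paper keeps the explicit polynomial $\tfrac12 f_\rho^8-C_\rho f_\rho^4-4C_\rho\|df_\rho\|_\infty f_\rho^3$ as a coefficient of $D_K^2$ and invokes the boundedness of $D_K^{(\rho)}$ only on the compact set where this polynomial is nonpositive, whereas you use Young's inequality to trade the lower powers of $f_\rho$ for a constant multiple of $\|D_K^{(\rho)} s\|^2$ and thereby avoid the second inequality in condition~\ref{cond:C_pi} altogether.
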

 
 \begin{proof}

 Since $f_\rho$ is $K$-invariant we have an equality on $\Gamma_c(W)^{(\rho)}$ ; 
 \begin{eqnarray*}
 \hat D^2_\rho&=&D^2+(Df_\rho^4 D_{K}+f_\rho^4 D_{K}D)+f_\rho^8D_{K}^2 \\ 
 &=& D^2+f_\rho^{2}(DD_{K}+D_{K}D)f_{\rho}^{2}+c(df_\rho^{2})D_{K}f_{\rho}^{2}-D_{K}f_{\rho}^{2}c(df_\rho^{2})+f_\rho^8D_{K}^2 \\
 &=& D^2+f_\rho^{2}(DD_{K}+D_{K}D)f_{\rho}^{2}+2c(df_\rho^{2})D_{K}f_{\rho}^{2}+f_\rho^8D_{K}^2.  
 \end{eqnarray*}
 Now for any $s\in \Gamma_c(W)^{(\rho)}$ we have 
 \begin{eqnarray*}
| ( f_\rho^{2}(DD_{K}+D_{K}D)f_{\rho}^{2}s, s)_W |&=&
| ( (DD_{K}+D_{K}D)f_{\rho}^{2}s, f_\rho^{2}s)_W | \\
 &\leq& C_\rho( D_K^2f_{\rho}^{2}s, f_\rho^{2}s)_W  \\
 &=&C_\rho( f_\rho^4 D_K^2 s,s)_W
 \end{eqnarray*}
 and 
\begin{eqnarray*}
 |(c(df_\rho^{2})D_{K}f_{\rho}^{2}s,s)_W|&=&|(c(df_\rho ^2)D_{K}f_\rho s, f_\rho s)_W | \\ 
 &=&|(2f_\rho  c(df_\rho )D_{K}f_\rho s, f_\rho s)_W | \\
 &\leq& 2\|df_\rho \|_{\infty} |( D_{K}(f_\rho )^{3/2}s, (f_\rho )^{3/2}s)_W | \\
 &=& 2C_\rho\|df_\rho \|_{\infty} (f_\rho ^3 D_{K}^2s, s)_W . 
 \end{eqnarray*}
 Summarizing the above inequalities we have 
\begin{eqnarray*}
\hat D^2_\rho &\geq&  D^2 +(-C_\rho f_{\rho}^{4}-4C_\rho\|df_\rho \|_{\infty} f_\rho ^3 + f_\rho^8)D_K^2 \\
&\geq& D^2 +\frac{\kappa_\rho f_\rho^8}{2}+\left(-C_\rho f_{\rho}^{4}-4C_\rho\|df_\rho \|_{\infty} f_\rho ^3 + \frac{f_\rho^8}{2}\right)D_K^2. 
\end{eqnarray*}
 Now put 
 \[
g_{\rho}:=\frac{f_\rho^8}{2} -C_\rho f_{\rho}^{4}-4C_\rho\|df_\rho \|_{\infty} f_\rho ^3 : M\to \bbR. 
 \]
 Since $f_\rho$ is proper and bounded below the function 
 $g_\rho$ is also proper and bounded below. 
 Note that $M_-:=g_\rho^{-1}((-\infty, 0])$ is a compact subset of $M$, and hence, by the boundedness of $D_K$ 
 (1.(d) in Definition~\ref{dfn:K-acyclic orbital system})  there exists a constant $C_{\rho, M_-}>0$ such that we have 
 \[
 \int_{M_-} \langle D_K^2s, s\rangle_W\leq C_{\rho,M_-} \int_{M_-} \langle s, s\rangle_W
 \]
 and 
 \begin{eqnarray*}
 ( g_{\rho} D_{K}^2s,s)_W&=&\left(\int_{M_-}+\int_{M\setminus M_-}\right)\langle g_\rho D_{K}^2s,s\rangle_W  \\
 &\geq& \int_{M_-}\langle g_\rho D_{K}^2s,s\rangle_W  \\ 
 &\geq& \min_{M_-}(g_\rho)C_{\rho,M_-}( s,s)_W. 
 \end{eqnarray*}
As a consequence we have 
\[
\hat D_{\rho}^2 +1\geq D^2 +\Phi_{\rho}
\]
 for 
 \[
 \Phi_\rho:=\frac{\kappa_\rho f_\rho^8}{2}+\min_{M_-}(g_\rho)C_{\rho,M_-}+1
 \]which is $K$-invariant, proper and bounded below. 
  \end{proof}
  
  As a corollary we have the following compactness by \cite[Proposition~B.1]{LoizidesSong}. 
 
 \begin{cor}\label{cor:K-Fredholm}
 For any $\rho\in{\rm Irr}(K)$, a bounded operator $((\hat D_\rho^2)^{(\rho)}+1)^{-1}$ on $L^2(W)^{(\rho)}$ 
  is a compact operator. 
  In particular $(\hat D_\rho)^{(\rho)}$ is a Fredholm operator on $L^2(W)^{(\rho)}$. 
 \end{cor}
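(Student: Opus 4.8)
The plan is to deduce the compactness directly from Proposition~\ref{prop:compactresolvent} together with the Rellich-type criterion \cite[Proposition~B.1]{LoizidesSong}. By Proposition~\ref{prop:compactresolvent} we have the operator inequality
\[
(\hat D_\rho^2)^{(\rho)}+1 \geq (D^2)^{(\rho)}+\Phi_\rho
\]
on the common core $\Gamma_c(W)^{(\rho)}$, where $\Phi_\rho$ is a $K$-invariant smooth proper function bounded below. The cited proposition says precisely that such domination by a Dirac-square-plus-proper-potential forces the self-adjoint operator $(\hat D_\rho^2)^{(\rho)}+1$ to have purely discrete spectrum without finite accumulation point, hence $\big((\hat D_\rho^2)^{(\rho)}+1\big)^{-1}$ compact; since $(\hat D_\rho^2)^{(\rho)}$ and $(\hat D_\rho^2)^{(\rho)}+1$ differ by a bounded operator, this gives the first assertion. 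So the only real task is to check that the hypotheses of \cite[Proposition~B.1]{LoizidesSong} are met, and then to read off the Fredholm statement.

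First I would check essential self-adjointness: $(\hat D_\rho)^{(\rho)}$, and hence $(\hat D_\rho^2)^{(\rho)}$, is essentially self-adjoint on $\Gamma_c(W)^{(\rho)}$ because $D$ and $D_K$ have finite propagation speed, so $\hat D_\rho$ is a finite-propagation Dirac-type operator on the complete manifold $M$; this was already observed in Section~\ref{subsec:Set-up and definition}. Likewise $(D^2)^{(\rho)}$ is the square of the genuine Dirac-type operator $D$, again of finite propagation, and $\Phi_\rho$ is the explicit proper, bounded-below potential produced in the proof of Proposition~\ref{prop:compactresolvent}; thus $(D^2)^{(\rho)}+\Phi_\rho$ is exactly a Schrödinger-type operator of the form to which the Rellich criterion applies, and the inequality above is its domination hypothesis. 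Unpacking the criterion itself — which I would only sketch — the mechanism is the min-max principle: writing $\lambda_n(\cdot)$ for the $n$-th min-max value of a self-adjoint operator bounded below, the form inequality on $\Gamma_c(W)^{(\rho)}$ yields $\lambda_n\big((\hat D_\rho^2)^{(\rho)}+1\big)\geq\lambda_n\big((D^2)^{(\rho)}+\Phi_\rho\big)$, while properness of $\Phi_\rho$ (each sublevel set $\Phi_\rho^{-1}((-\infty,c])$ is compact) combined with a partition-of-unity and local Rellich compactness shows $\lambda_n\big((D^2)^{(\rho)}+\Phi_\rho\big)\to\infty$; hence $\lambda_n\big((\hat D_\rho^2)^{(\rho)}+1\big)\to\infty$, equivalently the resolvent is compact.

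The step I expect to demand the most care — and the reason one cites \cite{LoizidesSong} rather than reproving it in place — is the form-domain bookkeeping underlying that min-max comparison: one must know that the form inequality valid on the core $\Gamma_c(W)^{(\rho)}$ propagates to an inclusion of form domains $\mathcal{D}(\hat D_\rho^{(\rho)})\subseteq \mathcal{D}((D)^{(\rho)})\cap \mathcal{D}(|\Phi_\rho|^{1/2})$ together with the inequality on the larger domain, which is what legitimizes $\lambda_n(A)\geq\lambda_n(B)$; and one must use essential self-adjointness on $\Gamma_c(W)^{(\rho)}$ to know the operators are uniquely determined by their forms on that core.

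Finally, the Fredholm statement is immediate: compactness of $\big((\hat D_\rho^2)^{(\rho)}+1\big)^{-1}$ means $(\hat D_\rho^2)^{(\rho)}$ has discrete spectrum, so $\ker(\hat D_\rho)^{(\rho)}=\ker(\hat D_\rho^2)^{(\rho)}$ is finite-dimensional, $0$ is either absent from or isolated in the spectrum of $(\hat D_\rho)^{(\rho)}$, and the range of $(\hat D_\rho)^{(\rho)}$ is closed. Being an odd self-adjoint operator on the $\bbZ/2$-graded Hilbert space $L^2(W)^{(\rho)}$, it is therefore Fredholm, with a well-defined $\bbZ/2$-graded index, which is what is needed for the subsequent construction of the equivariant index.
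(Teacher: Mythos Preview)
Your proposal is correct and follows exactly the paper's approach: the paper simply states that the corollary follows from Proposition~\ref{prop:compactresolvent} together with \cite[Proposition~B.1]{LoizidesSong}, and you have spelled out precisely this deduction with additional care about essential self-adjointness, form domains, and the passage to the Fredholm conclusion.
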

 
 \begin{dfn}\label{dfn:[M]}
 Define an element $[\hat D]\in R^{-\infty}(K)$ by 
 \[
 [\hat D](\rho):={\rm index}((\hat D_{\rho})^{(\rho)})\in\bbZ 
 \]for each $\rho \in {\rm Irr}(K)$. 
 We also use the notations  
 \[
 [\hat D]=[M,W, D_K]=[M,W]=[M]. 
 \]
 \end{dfn}
 
 Hereafter we often write $[\hat D](\rho)={\rm index}(\hat D_{\rho})\in\bbZ $
  instead of ${\rm index}((\hat D_{\rho})^{(\rho)})$. 
 
 In general a $K$-equivariant operator $A$ on a $\bbZ/2$-graded Hilbert space ${\calH}$ with isometric $K$-action 
 is called {\it $K$-Fredholm} if each isotypic component $A^{(\rho)}:{\calH}^{(\rho)}\to {\calH}^{(\rho)}$ 
 is Fredholm. 
 Such a $K$-Fredholm operator $A$ defines an element in $R^{-\infty}(K)$ denoted by a formal expression;
 \[
 {\rm index}_K(A)=\sum_{\rho\in{\rm Irr}(K)}{\rm index}(A^{(\rho)})\rho.  
 \]
 Corollary~\ref{cor:K-Fredholm} and Definition~\ref{dfn:[M]} imply that 
 \[
 \bigoplus_{\rho\in{\rm Irr}(K)}\hat D_{\rho} :L^2(W) \to L^2(W)
 \]is a $K$-Fredholm operator and $[\hat D]$ is its index in $R^{-\infty}(K)$. 
 
 \subsection{K-homology cycle representing the class $[\hat D]$}
 \label{subsec:K-homology cycle representing the class [ D]}
 We consider the same set-up as in the previous sections. 
 For each $\rho\in{\rm Irr}(K)$ we put 
 \[
 F_{\rho}:=\frac{\hat D_\rho}{\sqrt{1+(\hat D_{\rho})^2}}
 \]which is a bounded operator acting on $L^2(W)^{(\rho)}$ with $\|F_{\rho}\|=1$. 
 We can see that 
 \begin{equation}\label{eq:cycleF}
 F:=\bigoplus_{\rho\in\text{Irr}(K)}F_{\rho}
 \end{equation}gives a bounded operator on $\displaystyle L^2(W)=\bigoplus_{\rho\in\text{Irr}(K)}L^2(W)^{(\rho)}$. 

It is known that the formal completion $R^{-\infty}(K)$ can be identified with the K-homology group of the 
group C$^*$-algebra 
${\rm K}^0(C^*(K))$, 
which is also identified with the KK-group 
${\K\K} (C^*(K),\bbC)$.  
These groups are generated by triples consisting of 
a Hilbert space, a C$^*$-representation of $C^*(K)$ and a bounded operator on the Hilbert space 
satisfying certain boundedness and compactness.  
See \cite{Blackadar, Higson-Roe, Kasparov}  for basic definitions on K-homology or KK-theory. 
The above Corollary~\ref{cor:K-Fredholm} implies the following.  
\begin{prop}\label{prop:K-homologycycle}
 The bounded operator $F$ as in (\ref{eq:cycleF}) together with the natural representation of $C^*(K)$ on $L^2(W)$ 
  gives a {\K}-homology cycle which represents $[\hat D]$; 
 \[
[ (L^2(W), F)]=[\hat D]\in {\K\K} (C^*(K),\bbC)={\rm K}^0(C^*(K))=R^{-\infty}(K). 
 \]
 \end{prop}

  \subsection{Relation with Fujita-Furuta-Yoshida's deformation}
  \label{subsec:Relation with Fujita-Furuta-Yoshida type deformation}
  In this section we consider an another deformation of the form  
  \[
  D_{\rho, t}:=D+t\varphi_\rho^4 D_K \quad (t\geq 0)
  \]for $\rho\in{\rm Irr}(K)$ using a $K$-acyclic orbital Dirac-type operator $(D_K, \{V_{\rho}\}_{\rho\in{\rm Irr}(K)})$, 
  where $\varphi_{\rho}$ is the cut-off function as in (\ref{eq:cut-off fct}). 
  This type of deformation was studied for an {\it acyclic compatible system} in 
 a series of papers \cite{Fujita-Furuta-Yoshida1, Fujita-Furuta-Yoshida2,  Fujita-Furuta-Yoshida3}. 
 One main difference\footnote{In fact the acyclic compatible system is a family of Dirac-type operators along the fibers 
 which is defined on a family of open subsets. The deformation is given by the sum of them by 
 using a partition of unity.  It is one remarkable feature that the acyclic compatible system do not rely on a group action. 
 Though in this paper we do not investigate any relation between the equivariant acyclic compatible system 
 and the $K$-acyclic orbital Dirac-type operator 
 we believe that they give the same index under a suitable assumptions.} 
 between the above deformation and $\hat D_{\rho}$ is the presence of a proper function $f$. 
 To compare them we introduce a 1-parameter family 
  \[
  \bbD_\epsilon
  =D+(1-\epsilon)f_\rho^4 D_K+\epsilon t \varphi_\rho^4 D_K
  =D+ ((1-\epsilon)f^4 +\epsilon t)\varphi_\rho^4 D_K \quad (\epsilon\in[0,1])
  \]which acts on $L^2(W)$. 
  We show the following. 
  
  \begin{thm}\label{thm:FFY=F}
  For each $\rho\in{\rm Irr}(K)$ there exists $t_{\rho}>0$ 
  such that $\{\bbD_{\epsilon}\}_{\epsilon\in[0,1]}$ gives a family of Fredholm operator on $L^2(W)^{(\rho)}$ 
  for any $t>t_\rho$ and its Fredholm index does not depend on $\epsilon$ and $t$. 
  In particular we have  
  \[
  {\rm index}((D_{\rho, t})^{(\rho)})={\rm index}((\hat D_{\rho})^{(\rho)})\in \bbZ. 
  \]
  \end{thm}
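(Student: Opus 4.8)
The plan is to establish Fredholmness of $\bbD_\epsilon^{(\rho)}$ uniformly in $\epsilon\in[0,1]$ for all large $t$, and then invoke the homotopy invariance of the Fredholm index together with the independence of the index of the parameter. First I would observe that, just as in the proof of Proposition~\ref{prop:compactresolvent}, for each $\epsilon$ one computes $\bbD_\epsilon^2$ on $\Gamma_c(W)^{(\rho)}$ and expands the cross term using the anti-commutation property $D_Kc(dh)+c(dh)D_K=0$ (condition 1.(c)). Writing $a_\epsilon:=((1-\epsilon)f^4+\epsilon t)\varphi_\rho^4$, which is a $K$-invariant non-negative function, one gets
\[
\bbD_\epsilon^2=D^2+ a_\epsilon(DD_K+D_KD)+ 2c(da_\epsilon^{1/2})D_K a_\epsilon^{1/2}\cdot(\text{sign conventions as in the model computation})+a_\epsilon^2 D_K^2,
\]
and then the same Cauchy–Schwarz-type estimates using conditions \ref{cond:C_pi} and \ref{cond:eigenposi} give a lower bound
\[
\bbD_\epsilon^2\geq D^2 + \big(a_\epsilon^2 - C_\rho' a_\epsilon - C_\rho'\|da_\epsilon^{1/2}\|_\infty\, a_\epsilon^{1/2}\big)D_K^2
\]
on the region where $\varphi_\rho\equiv 1$, and a controlled error on the relatively compact collar where $\varphi_\rho$ transitions, together with a genuinely compactly supported error where $\varphi_\rho\equiv 0$ (handled exactly as the $M_-$ term in Proposition~\ref{prop:compactresolvent} via the boundedness hypothesis 1.(d)).

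The point to be careful about is uniformity in $\epsilon$: since $f\ge 1$, for $t$ large enough (say $t\ge t_\rho$ chosen so that the coefficient bracket becomes positive and proper on $V_\rho$) the coefficient $a_\epsilon^2-C_\rho' a_\epsilon-\cdots$ is bounded below by a proper, bounded-below $K$-invariant function $\Phi_\rho$ \emph{independent of $\epsilon$} on a fixed neighborhood of the non-compact end, because the dominant $a_\epsilon^2$ term is $\ge \min(f^8,t^2)\varphi_\rho^8$ up to lower order and $\|da_\epsilon^{1/2}\|_\infty$ stays uniformly bounded for $\epsilon\in[0,1]$ and $t$ in any fixed range — here I would fix $t$ once and for all $\ge t_\rho$ and note the estimate is then uniform in $\epsilon$. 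This yields $\bbD_\epsilon^2+1\ge D^2+\Phi_\rho$ on $L^2(W)^{(\rho)}$ uniformly, so by \cite[Proposition~B.1]{LoizidesSong} each $(\bbD_\epsilon^2)^{(\rho)}+1$ has compact resolvent, hence $\bbD_\epsilon^{(\rho)}$ is Fredholm for every $\epsilon\in[0,1]$.

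Next I would argue the path $\epsilon\mapsto (\bbD_\epsilon)^{(\rho)}$ is a norm-continuous path of self-adjoint Fredholm operators (in the appropriate sense — e.g. passing to the bounded transform $\bbD_\epsilon(1+\bbD_\epsilon^2)^{-1/2}$, which is norm-continuous in $\epsilon$ since $\epsilon\mapsto\bbD_\epsilon$ is continuous in the graph/resolvent sense on the common core $\Gamma_c(W)^{(\rho)}$). Homotopy invariance of the $\bbZ/2$-graded Fredholm index then gives $\mathrm{index}((\bbD_0)^{(\rho)})=\mathrm{index}((\bbD_1)^{(\rho)})$; by definition $\bbD_0=\hat D_\rho$ and $\bbD_1=D_{\rho,t}$, so $\mathrm{index}((D_{\rho,t})^{(\rho)})=\mathrm{index}((\hat D_\rho)^{(\rho)})$. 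Finally, independence of $t$ for $t>t_\rho$ follows by the same homotopy argument applied to the straight-line path between two admissible values of $t$ (or, more simply, by reparametrizing: for $t,t'>t_\rho$ the operators $D+t\varphi_\rho^4 D_K$ and $D+t'\varphi_\rho^4 D_K$ are joined by $D+s\varphi_\rho^4 D_K$, $s$ between $t$ and $t'$, each Fredholm by the same lower bound).

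The main obstacle I expect is the uniformity of the lower bound in $\epsilon$ near the non-compact end: one must verify that the ``bad'' terms $C_\rho' a_\epsilon$ and $C_\rho'\|da_\epsilon^{1/2}\|_\infty a_\epsilon^{1/2}$ are genuinely dominated by $a_\epsilon^2$ by a margin that does not degenerate as $\epsilon\to 0$ or $\epsilon\to1$ — this is where the choice $f\ge 1$ and a sufficiently large but fixed $t$ are essential, and where one should be slightly careful that $\|d a_\epsilon^{1/2}\|_\infty$ involves $\|df\|_\infty$ and $\|d\varphi_\rho\|_\infty$ but no hidden $t$-dependence that blows up. Everything else — the algebraic expansion of $\bbD_\epsilon^2$, the Cauchy–Schwarz estimates, the compact-resolvent conclusion, and the homotopy invariance of the index — is a direct transcription of the argument already carried out for Proposition~\ref{prop:compactresolvent} and Corollary~\ref{cor:K-Fredholm}.
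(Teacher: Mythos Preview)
There is a genuine gap in your compact-resolvent approach: it cannot work uniformly in $\epsilon$, and in fact it breaks down precisely at $\epsilon=1$. With $a_\epsilon=((1-\epsilon)f^4+\epsilon t)\varphi_\rho^4$, at $\epsilon=1$ you have $a_1=t\varphi_\rho^4\le t$, a bounded function. Hence $a_1^2D_K^2$ contributes at most a bounded term, and your candidate $\Phi_\rho$ is \emph{not} proper; your own stated lower bound $\min(f^8,t^2)\varphi_\rho^8$ equals $t^2$ on the entire end, confirming this. Since \cite[Proposition~B.1]{LoizidesSong} needs a proper potential, you do not get compact resolvent for $\bbD_1=D_{\rho,t}$ this way (and on a non-compact $M$ the operator $D^2+\text{const}$ does not have compact resolvent in general). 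So the Fredholmness of $D_{\rho,t}$ is not established, nor is the uniformity near $\epsilon=1$ needed for the homotopy.

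The paper's route avoids this by replacing compact resolvent with \emph{coercivity} (Proposition~\ref{prop:coercive}): one shows $\|\bbD_\epsilon s\|^2\ge t_\rho\kappa_\rho\|s\|^2$ for all $s$ supported in a fixed open set $U_\rho$ with compact complement, uniformly for $\epsilon\in[0,1]$ and $t>t_\rho$. The key observation enabling this is that on $L^2(W)^{(\rho)}$ the operator $D_K$ is bounded (condition 1.(d)), so $\bbD_\epsilon$ has the same principal symbol as $D$ and in particular has finite propagation speed; one then invokes the Anghel/FFY machinery (\cite{Anghel}, \cite[Section~3]{Fujita-Furuta-Yoshida2}) that a coercive family of Dirac-type operators with finite propagation speed on a complete manifold is Fredholm with constant $\bbZ/2$-graded index. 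Your Cauchy--Schwarz estimates are exactly the right ingredients for the coercivity bound---you simply need to draw the weaker conclusion (uniform positivity at infinity) rather than the unattainable stronger one (compact resolvent).
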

  \begin{cor}\label{cor:FFY=F}
  Define $[D_t]\in R^{-\infty}(K)$ by
  \[
  [D_t](\rho):={\rm index}((D_{\rho, t})^{(\rho)}) \quad (t > t_\rho)
  \]for each $\rho\in {\rm Irr}(K)$.   
  Then we have 
  \[
  [D_t]=[\hat D]\in R^{-\infty}(K).  
  \]
  \end{cor}
  
  Note that since both of $D_K$ and $D$ are essentially self-adjoint, 
  $\bbD_\epsilon$ is also an essentially self-adjoint operator on $L^2(W)^{(\rho)}$.  
  Theorem~\ref{thm:FFY=F} follows from the following estimate,  
  which is also known as the {\it coercivity} in \cite{Anghel}. 
  In fact, as in \cite{Fujita-Furuta-Yoshida2}, the $\bbZ/2$-graded Fredholm index of a coercive family of essentially self-adjoint operators  
  does not depend on a parameter of the family. 
    \begin{prop}\label{prop:coercive}
  There exist an open subset $U_\rho$ and a 
  constant $t_\rho>0$ such that $M\setminus U_{\rho}$ is compact and 
\[
\|\bbD_{\epsilon}s\|^2_{W} \geq t_\rho\kappa_\rho\|s\|^2_{W}
\] holds for any $s\in \Gamma_c(W)^{(\rho)}$ with ${\rm supp}(s)\subset U_\rho$, $\epsilon\in[0,1]$ and $t>t_\rho$, 
where $\kappa_\rho>0$ is the constant as in (\ref{cond:eigenposi}) of Definition~\ref{dfn:K-acyclic orbital system}.  
  \end{prop}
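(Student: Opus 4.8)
\medskip
\noindent\textbf{Plan of proof of Proposition~\ref{prop:coercive}.}
The plan is to run, for the family $\bbD_\epsilon$, the same pointwise manipulation of the squared operator as in the proof of Proposition~\ref{prop:compactresolvent}, and then to exploit that on the region where the proper function $f$ is large the zeroth-order coefficient of $D_K$ in $\bbD_\epsilon$ is automatically bounded below, uniformly in $\epsilon\in[0,1]$ and in $t>t_\rho$. First I would fix a large parameter $R>0$ (to be constrained below only through $C_\rho$, $\kappa_\rho$ and $\|df\|_\infty$), let $U_\rho$ be the intersection of $\{x\in M:f(x)>R\}$ with the interior of $\{\varphi_\rho=1\}$, and put $t_\rho:=R^4$. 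Since $f$ is proper and $\varphi_\rho\equiv 1$ off a relatively compact set, $M\setminus U_\rho$ is compact; and since $\mathrm{supp}(\varphi_\rho)\subset V_\rho$ we have $U_\rho\subset V_\rho$, so conditions~(\ref{cond:C_pi}) and~(\ref{cond:eigenposi}) of Definition~\ref{dfn:K-acyclic orbital system} are available for sections supported in $U_\rho$. On $U_\rho$ one has $\varphi_\rho\equiv 1$, hence $d\varphi_\rho\equiv 0$, so there
\[
\bbD_\epsilon=D+\mu^2D_K,\qquad\mu=\mu_{\epsilon,t}:=\bigl((1-\epsilon)f^4+\epsilon t\bigr)^{1/2},
\]
with $\mu$ a smooth, positive, $K$-invariant function; and for $t>t_\rho$ one gets the uniform lower bound $\mu^2>R^4$ on $U_\rho$ (indeed $\mu^2=(1-\epsilon)f^4+\epsilon t>(1-\epsilon)R^4+\epsilon R^4=R^4$).

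The next step is the operator identity on $\Gamma_c(W|_{U_\rho})^{(\rho)}$, obtained exactly as the expansion of $\hat D_\rho^2$ in the proof of Proposition~\ref{prop:compactresolvent} (using $[D_K,\mu]=0$ since $\mu$ is $K$-invariant, $[D,\mu]=c(d\mu)$, and $D_Kc(d\mu)+c(d\mu)D_K=0$, which is condition~(1)(c) of Definition~\ref{dfn:K-acyclic orbital system} applied to the $K$-invariant function $\mu$)\,:
\[
\bbD_\epsilon^2=D^2+\mu(DD_K+D_KD)\mu+2c(d\mu)D_K\mu+\mu^4D_K^2.
\]
For $s\in\Gamma_c(W)^{(\rho)}$ with $\mathrm{supp}(s)\subset U_\rho$, I would drop the nonnegative term $\|Ds\|_W^2$ and estimate the remaining three terms against the large positive quantity $\|D_K\mu^2s\|_W^2=(\mu^4D_K^2s,s)_W$. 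Condition~(\ref{cond:C_pi}) applied to $\mu s$ bounds $|(\mu(DD_K+D_KD)\mu s,s)_W|$ by $C_\rho\|D_K\mu s\|_W^2$, which is $\le(C_\rho/R^4)\|D_K\mu^2s\|_W^2$ since $\mu^2>R^4$ on $U_\rho$. For the gradient term, the elementary pointwise bounds $|d\mu|^2\le4(1-\epsilon)f^2\|df\|_\infty^2$ and $\mu^2\ge(1-\epsilon)f^4$ give $|d\mu|^2/\mu^2\le4\|df\|_\infty^2/f^2\le4\|df\|_\infty^2/R^2$ on $U_\rho$, hence $\|c(d\mu)D_K\mu s\|_W\le(2\|df\|_\infty/R)\|D_K\mu^2s\|_W$, so a weighted Young inequality controls $2(c(d\mu)D_K\mu s,s)_W$ by $\tfrac14\|D_K\mu^2s\|_W^2$ plus a universal multiple of $\|df\|_\infty^2R^{-2}\|s\|_W^2$. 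Choosing $R^4\ge4C_\rho$, the negative contributions are absorbed into $\tfrac12\|D_K\mu^2s\|_W^2$; and condition~(\ref{cond:eigenposi}) applied to $\mu^2s$ gives $\|D_K\mu^2s\|_W^2\ge\kappa_\rho\|\mu^2s\|_W^2\ge\kappa_\rho R^8\|s\|_W^2$. Enlarging $R$ so that moreover $\kappa_\rho R^{10}$ exceeds the relevant universal multiple of $\|df\|_\infty^2$ and $R^4\ge4$, one concludes
\[
\|\bbD_\epsilon s\|_W^2=(\bbD_\epsilon^2s,s)_W\ge\tfrac14\kappa_\rho R^8\,\|s\|_W^2\ge\kappa_\rho R^4\,\|s\|_W^2=t_\rho\kappa_\rho\,\|s\|_W^2,
\]
which is the asserted estimate.

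The step I expect to be the main obstacle is the uniform control, over all $\epsilon\in[0,1]$ and $t>t_\rho$, of the gradient cross term $2c(d\mu)D_K\mu$: both $\mu$ and $d\mu$ depend on the deformation parameters, and one must ensure the estimate does not degenerate as $\epsilon\to1$ or $t\to\infty$. What makes it go through is that the weight $1-\epsilon$ enters $|d\mu|^2$ quadratically but the lower bound $\mu^2\ge(1-\epsilon)f^4$ only linearly, so it cancels in the ratio $|d\mu|^2/\mu^2$, leaving a bound $\le4\|df\|_\infty^2/R^2$ on $U_\rho$ that is independent of $(\epsilon,t)$; at the extreme $\epsilon=1$ the function $\mu\equiv t^{1/2}$ is constant, so the cross term vanishes identically and there is no uniformity issue there at all. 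The remaining work is bookkeeping: one must make the various absorptions ($R^4\ge4C_\rho$, the lower bound on $\kappa_\rho R^{10}$, and $R^4\ge4$) simultaneously satisfiable with the single choice $t_\rho=R^4$, which is achieved by fixing $R$ sufficiently large at the end.
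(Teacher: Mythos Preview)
Your proof is correct and follows essentially the paper's approach: restrict to the interior of $\varphi_\rho^{-1}(1)$ intersected with $\{f^4>t_\rho\}$, expand $\bbD_\epsilon^2$, and absorb the cross terms into the dominant $D_K^2$ term using conditions~(\ref{cond:C_pi}) and~(\ref{cond:eigenposi}). The only difference is that the paper works directly with $h=(1-\epsilon)f^4+\epsilon t$ rather than its square root, and handles the gradient cross term $c(dh)D_K$ via the \emph{second} inequality $|(D_Ks,s)_W|\le C_\rho(D_K^2s,s)_W$ of condition~(\ref{cond:C_pi}) together with the elementary bound $(1-\epsilon)f^3\le h$ (valid since $f\ge1$), which lets everything be absorbed directly into $(hD_K^2s,s)_W$ and avoids your Young-inequality step and the residual $\|s\|_W^2$ term you then have to reabsorb.
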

  \begin{proof}
 We take $U_\rho'$ to be the interior of $\varphi_\rho^{-1}(1)$ and  put $h:=(1-\epsilon)f^4 +\epsilon t$. 
 On $U_\rho'$ consider the square 
\begin{eqnarray*}
(D+hD_k)^2&=&D^2+(DhD_K+hD_KD)+h^2D_K^2 \\
 &=&D^2+c(dh)D_K+h(DD_K+D_KD)+h^2D_k^2. 
\end{eqnarray*}
 For any $s\in\Gamma_c(W)^{(\rho)}$ with ${\rm supp}(s)\subset U_\rho'$ we have 
 \begin{eqnarray*}
 |(c(dh)D_Ks,s)_W|&=&  |(4(1-\epsilon)f^3c(df)D_Ks,s)_W| \\ 
 &\leq&  4(1-\epsilon)\|df\|_\infty|(f^3D_Ks,s)_W| \\
 &\leq & 4 \|df\|_\infty C_\rho(hD_K^2s,s)_W 
 \end{eqnarray*}
 and 
 \begin{eqnarray*}
 |(h(DD_K+D_KD)s,s)_W|\leq C_{\rho}(hD_K^2s,s)_W. 
  \end{eqnarray*}
  It implies 
  \begin{eqnarray*}
  \|\bbD_\epsilon s\|^2_W&=&((D+hD_K)^2s,s)_W \\ 
  &\geq& ((c(dh)D_K+h(DD_K+D_KD)+h^2D_k^2)s,s)_W \\ 
  &\geq& ((-4\|df\|_\infty C_\rho-C_\rho+h)hD_K^2s,s)_W. 
  \end{eqnarray*}
  Now put $t_\rho:=4\|df\|_\infty C_\rho+C_\rho+1$ 
  and define $U_\rho$ by  
  \[
  U_{\rho}:=\{x\in U_{\rho}' \ | \  f(x)^4>t_\rho\}. 
  \] 
  Then on $U_\rho$ when $t>t_\rho$ we have 
  $
  (-4\|df\|_\infty C_\rho-C_\rho+h)h> t_\rho. 
   $
  Finally we have \footnote{This argument shows that by taking $t_\rho$ large enough and 
  $U_\rho=(f^4)^{-1}((t_\rho,\infty))$ we can refine the estimate as $\|\bbD_\epsilon s\|^2\geq \|s\|^2$ 
  for any $s\in \Gamma_c(W)^{(\rho)}$ with ${\rm supp}(s)\subset U_\rho$. } 
  \begin{eqnarray*}
  \|\bbD_\epsilon s\|^2_W
  &\geq& (t_\rho D_K^2s,s)_W \\ 
  &\geq &t_{\rho}\kappa_\rho(s, s)_W=t_{\rho}\kappa_\rho\|s\|^2_W. 
  \end{eqnarray*}
  \end{proof}
  
  Theorem~\ref{thm:FFY=F} implies that one can adopt the deformation 
  \[
  D+t\varphi_\rho^4 D_K \quad (t\gg 0)
  \]without the proper function $f$ to discuss the equivariant index $[M](\rho)=[\hat D](\rho)={\rm index}(\hat D_{\rho})$. 
  It also implies\footnote{We can apply the argument in \cite[Section~3]{Fujita-Furuta-Yoshida2} for 
$\hat D_{\rho}$ directly without using the finite propagation speed condition. 
In fact by taking a family of cut-off function $\varphi_{a, \epsilon}$ in \cite[Lemma~A.1]{Fujita-Furuta-Yoshida2} 
in a $K$-invariant way the arguments in \cite{Fujita-Furuta-Yoshida2} can still work for $\hat D_\rho$. } 
that $[M](\rho)$ satisfies the {\it excision formula}, 
{\it sum formula}, {\it invariance under continuous deformations} and {\it product formula} as stated in \cite[Section~3]{Fujita-Furuta-Yoshida2}. 
In particular if there are two data $(M, W, D, D_K, V_{\rho})$ and $(M',W', D',D'_K,V'_\rho)$ for 
the same $K$ and $\rho\in {\rm Irr}(K)$ which are isomorphic on neighborhoods of compact subsets 
$M\setminus V_\rho$ and $M'\setminus V_{\rho}'$, then the excision formula implies that the resulting indices  coincide ; 
\begin{equation}\label{eq:excision}
[M](\rho)={\rm index}(D+\varphi_\rho^4D_K)={\rm index}(D'+\varphi'^4_\rho D'_K)=[M'](\rho). 
\end{equation}
It ensures us to define the index starting from a non-complete manifold by taking an 
appropriate completion, for instance a cylindrical end as in \cite[Section~7.1]{Fujita-Furuta-Yoshida2} or \cite[Section~4.7]{LoizidesSong}.   
We gave an explanation of such a construction in Section~\ref{subsec:Non-complete case and localization formula} 
and used in Section~\ref{sec:Riemann-Roch character of proper Hamiltonian torus space}. 

\bibliography{deformdirac}

@article {FujitaS1,
    AUTHOR = {Fujita, H.},
     TITLE = {{$S^1$}-equivariant local index and transverse index for
              non-compact symplectic manifolds},
   JOURNAL = {Math. Res. Lett.},
  FJOURNAL = {Mathematical Research Letters},
    VOLUME = {23},
      YEAR = {2016},
    NUMBER = {5},
     PAGES = {1351--1367},
%      ISSN = {1073-2780},
%   MRCLASS = {53D50 (19K56 58J20)},
%  MRNUMBER = {3601069},
%MRREVIEWER = {Peter Hochs},
       DOI = {10.4310/MRL.2016.v23.n5.a5},
%       URL = {https://doi.org/10.4310/MRL.2016.v23.n5.a5},
}

@article {LoizidesSong,
    AUTHOR = {Loizides, Y. and Song, Y.},
     TITLE = {Quantization of {H}amiltonian loop group spaces},
   JOURNAL = {Math. Ann.},
  FJOURNAL = {Mathematische Annalen},
    VOLUME = {374},
      YEAR = {2019},
    NUMBER = {1-2},
     PAGES = {681--722},
%      ISSN = {0025-5831},
%   MRCLASS = {53C27 (19K56 22F50)},
%  MRNUMBER = {3961324},
       DOI = {10.1007/s00208-018-1771-z},
%       URL = {https://doi.org/10.1007/s00208-018-1771-z},
}

@article {Braverman,
    AUTHOR = {Braverman, M.},
     TITLE = {Index theorem for equivariant {D}irac operators on noncompact
              manifolds},
   JOURNAL = {$K$-Theory},
  FJOURNAL = {$K$-Theory. An Interdisciplinary Journal for the Development,
              Application, and Influence of $K$-Theory in the Mathematical
              Sciences},
    VOLUME = {27},
      YEAR = {2002},
    NUMBER = {1},
     PAGES = {61--101},
%      ISSN = {0920-3036},
%   MRCLASS = {58J20 (19K56 58J22)},
%  MRNUMBER = {1936585},
%MRREVIEWER = {Evgeniy V. Troitski\u{\i}},
       DOI = {10.1023/A:1020842205711},
%       URL = {https://doi.org/10.1023/A:1020842205711},
}

@article {Nicolaescu,
    AUTHOR = {Nicolaescu, L. I.},
     TITLE = {On the space of {F}redholm operators},
   JOURNAL = {An. \c{S}tiin\c{t}. Univ. Al. I. Cuza Ia\c{s}i. Mat. (N.S.)},
  FJOURNAL = {Analele \c{S}tiin\c{t}ifice ale Universit\u{a}\c{t}ii ``Al. I. Cuza'' din
              Ia\c{s}i. Serie Nou\u{a}. Matematic\u{a}},
    VOLUME = {53},
      YEAR = {2007},
    NUMBER = {2},
     PAGES = {209--227},
%      ISSN = {1221-8421},
 %  MRCLASS = {58J32 (19L64 47A53 47A55 47G30)},
 % MRNUMBER = {2386795},
% MRREVIEWER = {Evgeniy V. Troitski\u{\i}},
}

@article {Andersen, 
    AUTHOR = {Andersen, J. E.},
     TITLE = {Geometric quantization of symplectic manifolds with respect to
              reducible non-negative polarizations},
   JOURNAL = {Comm. Math. Phys.},
  FJOURNAL = {Communications in Mathematical Physics},
    VOLUME = {183},
      YEAR = {1997},
    NUMBER = {2},
     PAGES = {401--421},
}

@book {Atiyah1,
    AUTHOR = {Atiyah, M. F.},
     TITLE = { Elliptic Operators and Compact Groups.},
    SERIES = { Lecture Notes in Mathematics},
    VOLUME = {401},
      NOTE = {},
 PUBLISHER = { Springer-Verlag},
   ADDRESS = {},
      YEAR = {1974},
     PAGES = {},
}

@article {Atiyah-Singer, 
    AUTHOR = {Atiyah, M. F. and Singer, I. M.},
     TITLE = {The index of elliptic operators. {I}},
   JOURNAL = {Ann. of Math. (2)},
  FJOURNAL = {Annals of Mathematics. Second Series},
    VOLUME = {87},
      YEAR = {1968},
     PAGES = {484--530},
}

@article{Fujita-Furuta-Yoshida1,
    AUTHOR = {Fujita, H. and Furuta, M. and Yoshida, T.},
     TITLE = {Torus fibrations and localization of index {I}---polarization
              and acyclic fibrations},
   JOURNAL = {J. Math. Sci. Univ. Tokyo},
  FJOURNAL = {The University of Tokyo. Journal of Mathematical Sciences},
    VOLUME = {17},
      YEAR = {2010},
    NUMBER = {1},
     PAGES = {1--26},
      ISSN = {1340-5705},
   MRCLASS = {53D50 (58J20)},
  %MRNUMBER = {2676658 (2012b:53200)},
MRREVIEWER = {Bernhard A. Hanke},
}

@article{Fujita-Furuta-Yoshida2,
    AUTHOR = {Fujita, H. and Furuta, M. and Yoshida, T.},
     TITLE = {Torus fibrations and localization of index {II}: local index
              for acyclic compatible system},
   JOURNAL = {Comm. Math. Phys.},
  FJOURNAL = {Communications in Mathematical Physics},
    VOLUME = {326},
      YEAR = {2014},
    NUMBER = {3},
     PAGES = {585--633},
      ISSN = {0010-3616},
   MRCLASS = {81Q10 (58J20 81Q12)},
  %MRNUMBER = {3173401},
       DOI = {10.1007/s00220-014-1890-7},
       URL = {http://dx.doi.org/10.1007/s00220-014-1890-7},
}

@article {Fujita-Furuta-Yoshida3,
    AUTHOR = {Fujita, H. and Furuta, M. and Yoshida, T.},
     TITLE = {Torus fibrations and localization of index {III}: equivariant
              version and its applications},
   JOURNAL = {Comm. Math. Phys.},
  FJOURNAL = {Communications in Mathematical Physics},
    VOLUME = {327},
      YEAR = {2014},
    NUMBER = {3},
     PAGES = {665--689},
      ISSN = {0010-3616},
   MRCLASS = {58J22 (19K56 53D50 53D55 81Q10 81Q12)},
  %MRNUMBER = {3192046},
MRREVIEWER = {V. Oproiu},
       DOI = {10.1007/s00220-014-2039-4},
       URL = {http://dx.doi.org/10.1007/s00220-014-2039-4},
}

@article {Tian-Zhang, 
    AUTHOR = {Tian, Y. and Zhang, W.},
     TITLE = {An analytic proof of the geometric quantization conjecture of
              {G}uillemin-{S}ternberg},
   JOURNAL = {Invent. Math.},
  FJOURNAL = {Inventiones Mathematicae},
    VOLUME = {132},
      YEAR = {1998},
    NUMBER = {2},
     PAGES = {229--259},
}

@article {Ma-Zhangarxiv,  
    AUTHOR = {Ma, X. and Zhang, W.},
     TITLE = {Geometric quantization for proper moment maps: the {V}ergne
              conjecture},
   JOURNAL = {Acta Math.},
  FJOURNAL = {Acta Mathematica},
    VOLUME = {212},
      YEAR = {2014},
    NUMBER = {1},
     PAGES = {11--57},
      ISSN = {0001-5962},
  % MRCLASS = {53D50 (53D20 57Sxx)},
  % MRNUMBER = {3179607},
       DOI = {10.1007/s11511-014-0108-3},
       URL = {http://dx.doi.org/10.1007/s11511-014-0108-3},
}

@article {Loizides-Song2, 
    AUTHOR = {Loizides, Y.  and Song, Y.},
     TITLE = {Norm-square localization and the quantization of Hamiltonian loop group spaces},
   JOURNAL = {J. Funct. Anal.},
  FJOURNAL = {Journal of Functional Analysis},
    VOLUME = {278},
      YEAR = {2020},
    NUMBER = {9},
     PAGES = {45 p},
}

@article {Kasparov,
    AUTHOR = {Kasparov, G.},
     TITLE = {Elliptic and transversally elliptic index theory from the
              viewpoint of {$KK$}-theory},
   JOURNAL = {J. Noncommut. Geom.},
  FJOURNAL = {Journal of Noncommutative Geometry},
    VOLUME = {10},
      YEAR = {2016},
    NUMBER = {4},
     PAGES = {1303--1378},
%      ISSN = {1661-6952},
%   MRCLASS = {19K56 (19K35 19L47 58J20)},
%  MRNUMBER = {3597146},
%MRREVIEWER = {No\'{e} B\'{a}rcenas Torres},
%       DOI = {10.4171/JNCG/261},
%       URL = {https://doi.org/10.4171/JNCG/261},
}

@article {Anghel,
    AUTHOR = {Anghel, N.},
     TITLE = {An abstract index theorem on noncompact {R}iemannian
              manifolds},
   JOURNAL = {Houston J. Math.},
  FJOURNAL = {Houston Journal of Mathematics},
    VOLUME = {19},
      YEAR = {1993},
    NUMBER = {2},
     PAGES = {223--237},
%      ISSN = {0362-1588},
%   MRCLASS = {58G10 (47A53 47F05)},
%  MRNUMBER = {1225459},
%MRREVIEWER = {John Roe},
}

@article {Fujitaorigami,
    AUTHOR = {Fujita, H.},
     TITLE = {A {D}anilov-type formula for toric origami manifolds via
              localization of index},
   JOURNAL = {Osaka J. Math.},
  FJOURNAL = {Osaka Journal of Mathematics},
    VOLUME = {55},
      YEAR = {2018},
    NUMBER = {4},
     PAGES = {619--645},
%      ISSN = {0030-6126},
%   MRCLASS = {53D50 (53C27 57S25 58J20)},
%  MRNUMBER = {3862778},
%       URL = {https://projecteuclid.org/euclid.ojm/1539158664},
}

@book {Higson-Roe,
    AUTHOR = {Higson, N. and Roe, J.},
     TITLE = {Analytic {$K$}-homology},
    SERIES = {Oxford Mathematical Monographs},
      NOTE = {Oxford Science Publications},
 PUBLISHER = {Oxford University Press, Oxford},
      YEAR = {2000},
%     PAGES = {xviii+405},
%      ISBN = {0-19-851176-0},
%   MRCLASS = {58J22 (19D55 19K33 19K35 19K56 46L80 58J20)},
%  MRNUMBER = {1817560},
%MRREVIEWER = {Yuri A. Kordyukov},
}

@book {Blackadar,
    AUTHOR = {Blackadar, B.},
     TITLE = {{$K$}-theory for operator algebras},
    SERIES = {Mathematical Sciences Research Institute Publications},
    VOLUME = {5},
   EDITION = {Second},
 PUBLISHER = {Cambridge University Press, Cambridge},
      YEAR = {1998},
%     PAGES = {xx+300},
%     ISBN = {0-521-63532-2},
%   MRCLASS = {46L80 (19Kxx 58G12)},
%  MRNUMBER = {1656031},
}

@Article{Loizides-Song-Meinrenken,
 Author = {Loizides, Y. and Meinrenken, E.  and Song, Y.},
 Title = {{Spinor modules for Hamiltonian loop group spaces}},
 FJournal = {{The Journal of Symplectic Geometry}},
 Journal = {{J. Symplectic Geom.}},
 ISSN = {1527-5256; 1540-2347/e},
 Volume = {18},
 Number = {3},
 Pages = {889--937},
 Year = {2020},
}

@article {Takata,
    AUTHOR = {Takata, D.},
     TITLE = {An analytic {$LT$}-equivariant index and noncommutative
              geometry},
   JOURNAL = {J. Noncommut. Geom.},
  FJOURNAL = {Journal of Noncommutative Geometry},
    VOLUME = {13},
      YEAR = {2019},
    NUMBER = {2},
     PAGES = {553--586},
%      ISSN = {1661-6952},
%   MRCLASS = {58B34 (22E67 58J22)},
%  MRNUMBER = {3988755},
%       DOI = {10.4171/JNCG/330},
%       URL = {https://doi.org/10.4171/JNCG/330},
}

@article {Takata2,
    AUTHOR = {Takata, D.},
     TITLE = {L{T}-equivariant index from the viewpoint of {KK}-theory. {A}
              global analysis on the infinite-dimensional {H}eisenberg
              group},
   JOURNAL = {J. Geom. Phys.},
  FJOURNAL = {Journal of Geometry and Physics},
    VOLUME = {150},
      YEAR = {2020},
%     PAGES = {103591, 30},
%      ISSN = {0393-0440},
%   MRCLASS = {19K56 (19K35 22E67 58B34 58J22)},
%  MRNUMBER = {4051921},
%MRREVIEWER = {Robert Yuncken},
%       DOI = {10.1016/j.geomphys.2019.103591},
%       URL = {https://doi.org/10.1016/j.geomphys.2019.103591},
}

@unpublished{Takata3,
  author = {Takata, D.},
  title = {An infinite-dimensional index theorem and the {H}igson-{K}asparov-{T}rout algebra
},
  note = {arXiv:1811.06811   }
  %year = {}
}

@article {LoizidesRodsphonSong,
    AUTHOR = {Loizides, Yiannis and Rodsphon, Rudy and Song, Yanli},
     TITLE = {A {KK}-theoretic perspective on deformed {D}irac operators},
   JOURNAL = {Adv. Math.},
  FJOURNAL = {Advances in Mathematics},
    VOLUME = {380},
      YEAR = {2021},
 %    PAGES = {107604},
%      ISSN = {0001-8708},
%   MRCLASS = {19K35 (46L87 58J20)},
%  MRNUMBER = {4205115},
%      DOI = {10.1016/j.aim.2021.107604},
%       URL = {https://doi.org/10.1016/j.aim.2021.107604},
}

@article {Witten,
    AUTHOR = {Witten, E.},
     TITLE = {Supersymmetry and {M}orse theory},
   JOURNAL = {J. Differential Geometry},
  FJOURNAL = {Journal of Differential Geometry},
    VOLUME = {17},
      YEAR = {1982},
    NUMBER = {4},
     PAGES = {661--692 (1983)},
%      ISSN = {0022-040X},
%   MRCLASS = {58G99 (53C99 81G20)},
%  MRNUMBER = {683171},
%MRREVIEWER = {I. Vaisman},
%      URL = {http://projecteuclid.org/euclid.jdg/1214437492},
}

@article {Hochs-SongI,
    AUTHOR = {Hochs, P. and Song, Y.},
     TITLE = {An equivariant index for proper actions {I}},
   JOURNAL = {J. Funct. Anal.},
  FJOURNAL = {Journal of Functional Analysis},
    VOLUME = {272},
      YEAR = {2017},
    NUMBER = {2},
     PAGES = {661--704},
%      ISSN = {0022-1236},
%   MRCLASS = {58J22 (19K56 19L47 46L85)},
%  MRNUMBER = {3571905},
%MRREVIEWER = {Peter Haskell},
%       DOI = {10.1016/j.jfa.2016.08.024},
%       URL = {https://doi.org/10.1016/j.jfa.2016.08.024},
}

@article {Kubota,
    AUTHOR = {Kubota, Y.},
     TITLE = {The joint spectral flow and localization of the indices of
              elliptic operators},
   JOURNAL = {Ann. K-Theory},
  FJOURNAL = {Annals of K-Theory},
    VOLUME = {1},
      YEAR = {2016},
    NUMBER = {1},
     PAGES = {43--83},
%      ISSN = {2379-1683},
%   MRCLASS = {19K56 (19K35 19L41)},
%  MRNUMBER = {3514936},
%       DOI = {10.2140/akt.2016.1.43},
%       URL = {https://doi.org/10.2140/akt.2016.1.43},
}

@article {ParadanlocRR,
    AUTHOR = {Paradan, P.-E.},
     TITLE = {Localization of the {R}iemann-{R}och character},
   JOURNAL = {J. Funct. Anal.},
  FJOURNAL = {Journal of Functional Analysis},
    VOLUME = {187},
      YEAR = {2001},
    NUMBER = {2},
     PAGES = {442--509},
%      ISSN = {0022-1236},
%   MRCLASS = {53D50 (53D20 55N91 57R20 58J20)},
%  MRNUMBER = {1875155},
%MRREVIEWER = {Giovanni Giachetta},
%       DOI = {10.1006/jfan.2001.3825},
%       URL = {https://doi.org/10.1006/jfan.2001.3825},
}

@UNPUBLISHED{Yoshida100,
  AUTHOR= {Yoshida, T.},
  TITLE = {Adiabatic limits, Theta functions, and geometric quantization}, 
  NOTE = {ar{X}iv:1904.04076},
}

\end{document}